\newcommand{\hidden}[1]{}
\numberwithin{equation}{section}%公式按章节编号
\newtheorem{theorem}{Theorem}[section]
\newtheorem{proposition}[theorem]{Proposition}
\newtheorem{corollary}[theorem]{Corollary}
\newtheorem{lemma}[theorem]{Lemma}
\theoremstyle{remark}
\newtheorem{remark}[theorem]{{Remark}}
\begin{document}

\title{Rational maps associated with the Sigmoid Beverton-Holt model on the projective line over $\mathbb{Q}_p$}

%\title{Appendix to the Shrinking Target Problem for Matrix Transformations of Tori: revisted}

 \author{Cheng Liu \\ {\small\sc (Wuhan) }}

\bigskip

\date{}

 \maketitle

\begin{abstract}
	We describe the dynamical structure of the $p$-adic rational dynamical systems associated with the Sigmoid Beverton-Holt model on the projective line $\mathbb{P}^1(\mathbb{Q}_p)$ over the field $\mathbb{Q}_p$ of $p$-adic numbers. Our methods are minimal decomposition of $p$-adic polynomials with coefficients in $\mathbb{Z}_p$ established by Fan and Liao and the chaotic description of $p$-adic repellers of Fan, Liao, Wang and Zhou.
\end{abstract}
$\textbf{Keywords}$: $p$-adic dynamical system,
Sigmoid Beverton-Holt model, minimal decomposition, $p$-adic repeller

\bigskip

\section{Introduction}
Let $p\ge 2$ be a prime. Let $\mathbb{Q}_p$ be the field of $p$-adic numbers and $\mathbb{P}^1(\mathbb{Q}_p)$ be the projective line over $\mathbb{Q}_p$. Polynomial maps with coefficients in $\mathbb{Q}_p$, have been extensively studied as natural examples of dynamical systems on $\mathbb{Q}_p$. In \cite{fl11}, Fan and Liao pointed out that the dynamical structure of polynomials with coefficients in the ring $\mathbb{Z}_p$ of $p$-adic integers is often determined by their minimal subsystems. They also provided a complete description of the dynamical structure of quadratic polynomial dynamical systems on $\mathbb{Z}_2$. Since then, a significant amount of work has been done on special polynomial dynamical systems using the method from \cite{fl11}. Some examples include the study of square mappings on $\mathbb{Z}_p$ in \cite{fl16s} and Chebyshev polynomials on $\mathbb{Z}_2$ in \cite{fl16c}. Additionally, there has been considerable interest in $p$-adic chaos. In \cite{ws98}, Woodcock and Smart demonstrated that the polynomial $\frac{x^p-x}{p}$ on $\mathbb{Z}_p$ is topologically conjugate to the full shift on $\{0,\cdots,p-1\}^{\mathbb{N}}$, hence, exibits some chaotic property of the system. This chaotic property has also been investigated for many other polynomials in \cite{dsv06,flw07,tvw89}. In particular, in \cite{flw07}, Fan, Liao, Wang and Zhou proved that transitive $p$-adic repellers (including a large family of polynomials) are topologically conjugate to some subshifts of finite type.
\par
As a natural generalization of polynomial maps, rational maps have also been widely studied. For rational maps of degree $1$ on $\mathbb{P}^1(\mathbb{Q}_p)$, Fan, Fan, Liao and Wang \cite{fflw14} completely depicted their dynamical structure. Then, in \cite{fflw17}, the same authors proved that rational maps of good reduction with degree at least $2$ have a similar dynamical structure. Since the dynamical structure of general rational maps is quite difficult to describe, people focus on depicting the dynamical structure for some special rational maps. For example, in \cite{fl18,sfl18}, Fan and Liao fully described the dynamical structure of $ax+1/x$ on $\mathbb{P}^1(\mathbb{Q}_p)$. The chaotic property of some other rational maps was studied in \cite{als18,mk16,mk18,mk22}.
\par
In this paper, we are interested in studying the dynamical structure of rational maps associated with the Sigmoid Beverton-Holt model. The Sigmoid Beverton-Holt model has a wide range of practical applications, such as predicting insurance premiums, forecasting natural populations and determining appropriate fishing rates. We refer to \cite{gkfcr12,hkk12} for more details.\par
Let $N\ge 1$ be an integer and let $a\in \mathbb{Q}_p\setminus \{0\}$ be a nonzero $p$-adic number. Consider the $p$-adic rational
dynamical system $\left(\mathbb{P}^1(\mathbb{Q}_p),\phi_N\right)$, where 
\begin{align}\label{perf}
  \phi_N(z)=\frac{az^N}{z^N+1}
\end{align}
is the recruitment function of the Sigmoid Beverton-Holt model. In \cite{dt24}, Diagana studied the fixed points and maximal Siegel disks of \eqref{perf} under the assumptions $gcd(N,p)=1$ and $0<|a|_p<1$. Our main goal is to describe the long-term behavior of the system $\left(\mathbb{P}^1(\mathbb{Q}_p),\phi_N\right)$ for all $a\in \mathbb{Q}_p\setminus\{0\}$ without assuming $gcd(N,p)=1$.
\par
For convenience, we denote by
\[D(x_0,r):=\{x\in \mathbb{Q}_p:|x-x_0|_p\le r\}\]
the closed which is also open disk centered at $x_0$ with radius $r$, and 
\[S(x_0,r):=\{x\in \mathbb{Q}_p:|x-x_0|_p= r\}\]
the sphere centered at $x_0$ with radius $r$.
The following theorem reduces the study of our rational dynamical system to the study of a polynomial dynamical system.
\begin{proposition}\label{thm2.5}
  Let $F(x)=ax^n\in\mathbb{Q}_p[x]$ with $n\ge 1$. Suppoes $G(x)=b^mx^m+\cdots+b_0\in\mathbb{Q}_p[x]$ with $b_mb_0\neq 0$.
  Let $\phi(x)=\frac{F(x)}{G(x)}$. If $m\le n$, then $\left(\mathbb{P}^1(\mathbb{Q}_p),\phi\right)$ is topologically conjugate to $\left(\mathbb{P}^1(\mathbb{Q}_p),H\right)$ with 
  \[H(x)=\frac{x^nG(x^{-1})}{a}\in\mathbb{Q}_p[x]\] 
  and $deg(H)=n$.
\end{proposition}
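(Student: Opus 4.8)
The plan is to exhibit the conjugacy explicitly by means of the inversion $\sigma(x)=1/x$. This is a M\"obius transformation, hence a homeomorphism of $\mathbb{P}^1(\mathbb{Q}_p)$ onto itself, and moreover an involution, $\sigma^{-1}=\sigma$, interchanging $0$ and $\infty$. I would then claim that $\sigma$ conjugates $\phi$ to $H$, that is, $H=\sigma\circ\phi\circ\sigma=\sigma\circ\phi\circ\sigma^{-1}$, which is exactly the assertion of topological conjugacy. The work is to verify this identity and the two structural claims about $H$.

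First I would compute $\phi\circ\sigma$. Since $F(x)=ax^n$ we have $F(1/x)=ax^{-n}$, and since $G(x)=\sum_{i=0}^m b_i x^i$ we have $G(1/x)=\sum_{i=0}^m b_i x^{-i}$. Hence, after clearing denominators by multiplying through by $x^n$,
\[
\phi(\sigma(x))=\frac{a\,x^{-n}}{\sum_{i=0}^m b_i x^{-i}}=\frac{a}{\sum_{i=0}^m b_i x^{\,n-i}}.
\]
Applying $\sigma$ once more yields
\[
\sigma\bigl(\phi(\sigma(x))\bigr)=\frac{1}{a}\sum_{i=0}^m b_i x^{\,n-i}=\frac{x^n G(x^{-1})}{a}=H(x),
\]
which is precisely the desired relation on the dense set where all expressions are honest rational functions. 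Since $\sigma$, $\phi$ and $H$ are continuous self-maps of $\mathbb{P}^1(\mathbb{Q}_p)$ (equivalently, rational maps agreeing at infinitely many points must coincide), the identity $\sigma\circ\phi\circ\sigma=H$ then holds on all of $\mathbb{P}^1(\mathbb{Q}_p)$. For the two structural claims I would note that the hypothesis $m\le n$ is exactly what forces every exponent $n-i$ with $0\le i\le m$ to be nonnegative, so that $H$ is a genuine polynomial rather than a Laurent polynomial; this is the sole place where $m\le n$ enters. The degree claim follows from the term $i=0$, which contributes $\tfrac{b_0}{a}x^n$ with nonzero coefficient because $b_0\neq 0$ and $a\neq 0$, so $\deg H=n$.

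The computation is otherwise routine, and the only genuinely delicate point is the book-keeping at the distinguished points $0$, $\infty$, and the poles of $\phi$ (the zeros of $G$), where $\phi$ or $H$ takes the value $\infty$ and the algebraic manipulation above is not literally valid. I would check the conjugacy there directly by tracking $\sigma:0\leftrightarrow\infty$; for example at $x=0$ one follows $0\mapsto\infty\mapsto\phi(\infty)\mapsto\sigma(\phi(\infty))$ and compares with $H(0)$, splitting into the case $n>m$ (where $\phi(\infty)=\infty$, $\sigma(\infty)=0$ and $H(0)=0$) and the case $n=m$ (where $\phi(\infty)=a/b_m$ and $H(0)=b_m/a$); in both cases the values agree. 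This verification across the exceptional points is the main thing to handle with care, although it is ultimately forced by continuity once the identity is established on a dense set.
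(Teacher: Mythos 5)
Your proof is correct and uses exactly the conjugacy the paper uses: the inversion $\pi(x)=1/x$, with the identity $H=\pi\circ\phi\circ\pi^{-1}$ verified by direct computation. The paper's own proof is just the one-line observation that $x\mapsto 1/x$ works; your write-up supplies the details (the algebra, the role of $m\le n$, the degree claim, and the exceptional points) but is the same argument.
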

\begin{proof}
  The theorem follows by choosing the function $\pi: x\mapsto \frac{1}{x}$ as a conjugacy.
\end{proof}
By Proposition \ref{thm2.5}, we have that $\left(\mathbb{P}^1(\mathbb{Q}_p),\phi_N\right)$ is topologically conjugate to $\left(\mathbb{P}^1(\mathbb{Q}_p),f_N\right)$, where
\[f_N(x)=\frac{x^N+1}{a}.\]
Since the case $N=1$ has been studied by Fan, Li, Yao and Zhou in \cite{flyz07}, in what follows, we will study the dynamical system $(\mathbb{P}^1(\mathbb{Q}_p),f_N)$ with $N\ge 2$. \par
When $|a|_p>1$, we have the following theorem.
\begin{theorem}\label{sy1}
  Suppose $|a|_p>1$. Then, we have 
  \[\lim_{n\to\infty}f_N^n(x)=\infty,\quad \forall x\in \{\infty\}\cup\mathbb{Q}_p\setminus D\left(0,|a|_p^{\frac{1}{N-1}}\right)\]
  and
  \[\lim_{n\to\infty}f_N^n(x)=x_0,\quad \forall x\in D\left(0,\ \frac{1}{p}|a|_p^{\frac{1}{N-1}}\right)\]
  where $x_0$ is the fixed point of $f_N$ with $|x_0|_p=\frac{1}{|a|_p}$. Moreover, if $(N-1)\nmid v_p(a)$, then $S(0,|a|_p^{\frac{1}{N-1}})=\emptyset$ and $\left(\mathbb{P}^1(\mathbb{Q}_p),f_N\right)$ has exactly two fixed points $\infty$ and $x_0$. Otherwise, the subsystem $\left(S(0,|a|_p^{\frac{1}{N-1}}), f_N\right)$ is topologically conjugate to $(S(0,1), g_N)$, where $g_N\in\mathbb{Z}_p[x]$ is defined by
  \begin{align}\label{dyn1}
    g_N(x)=\frac{1}{a|a|_p}x^N+\frac{1}{a}|a|_p^{\frac{1}{N-1}}.
  \end{align}  
\end{theorem}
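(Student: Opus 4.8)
The plan is to reduce the whole analysis to the behaviour of the single real quantity $|x|_p$ under $f_N$ and then treat the regions $|x|_p>r$, $|x|_p\le\frac1p r$ and $|x|_p=r$ separately, where I write $r=|a|_p^{1/(N-1)}>1$ and $L=-v_p(a)=\log_p|a|_p>0$. The key observation is the ultrametric identity that whenever $|x|_p>1$ one has $|f_N(x)|_p=|x^N+1|_p/|a|_p=|x|_p^N/|a|_p$; on the logarithmic scale $y=\log_p|x|_p$ this is exactly the affine map $y\mapsto Ny-L$, whose unique fixed point $y^\ast=L/(N-1)=\log_p r$ is repelling with multiplier $N\ge2$. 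From this the first assertion is immediate: if $|x|_p>r$, an induction gives $|f_N^n(x)|_p>r$ for all $n$ and $\log_p|f_N^n(x)|_p-\log_p r=N^n(\log_p|x|_p-\log_p r)\to\infty$, so $f_N^n(x)\to\infty$, while $\infty$ is fixed because $f_N$ has degree $N\ge2$. The same identity shows $S(0,r)$ is forward invariant, since $|x|_p=r$ forces $|f_N(x)|_p=r^N/|a|_p=r$, and that $S(0,r)\neq\emptyset$ precisely when $r$ lies in the value group $p^{\mathbb Z}=|\mathbb Q_p^\times|_p$, i.e. when $(N-1)\mid v_p(a)$, which is the dichotomy of the last two assertions.

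For the second assertion I would work on $D:=D(0,\frac1p r)$. The inequality $|x^N-y^N|_p\le|x-y|_p\max(|x|_p,|y|_p)^{N-1}$ together with $\max\le\frac1p r$ gives, after dividing by $|a|_p$ and using $r^{N-1}=|a|_p$, the Lipschitz bound $|f_N(x)-f_N(y)|_p\le p^{-(N-1)}|x-y|_p$, so $f_N$ contracts $D$ with ratio $p^{-(N-1)}<1$. Comparing with the centre, $|f_N(x)|_p\le\max\!\big(p^{-(N-1)}\tfrac1p r,\ |1/a|_p\big)\le\frac1p r$ (the second term is $\le\frac1p r$ because $|a|_p\ge p$ forces $|a|_p^{N/(N-1)}\ge p$), so $D$ is forward invariant. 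As $D$ is complete, the ultrametric Banach fixed point theorem yields a unique fixed point $x_0\in D$ with $f_N^n(x)\to x_0$ for every $x\in D$; substituting $x_0$ into $ax_0=x_0^N+1$ and excluding $|x_0|_p\ge1$ (which would force $|x_0|_p=1$ or $|x_0|_p=r$, both impossible in $D$) pins down $|x_0|_p=1/|a|_p$.

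It then remains to handle the sphere. When $(N-1)\nmid v_p(a)$ the sphere is empty, and a case analysis of $|a|_p|x|_p=|x^N+1|_p$ on the fixed-point equation $x^N-ax+1=0$ shows that the only finite solution has $|x|_p=1/|a|_p$ and hence equals $x_0$ (unique in $D$ by the contraction), so the fixed points are exactly $\infty$ and $x_0$. When $(N-1)\mid v_p(a)$ I would pick $c\in\mathbb Q_p^\times$ with $v_p(c)=v_p(a)/(N-1)$, so $|c|_p=r$, and take the scaling $\pi(y)=cy$ as a homeomorphism $S(0,1)\to S(0,r)$. A direct computation gives $\pi^{-1}(f_N(\pi(y)))=\frac{c^{N-1}}{a}y^N+\frac1{ac}$, and the identities $c^{N-1}=p^{v_p(a)}=1/|a|_p$ and $1/c=|a|_p^{1/(N-1)}$ turn this into exactly $g_N(y)=\frac{1}{a|a|_p}y^N+\frac1a|a|_p^{1/(N-1)}$; checking valuations shows the leading coefficient is a unit and the constant term lies in $p\mathbb Z_p$, so $g_N\in\mathbb Z_p[x]$ and $g_N(S(0,1))\subseteq S(0,1)$, which establishes the conjugacy.

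The genuinely delicate points are two. First, the forward invariance of $D(0,\frac1p r)$ must be deduced from the contraction estimate rather than from a naive bound on $|f_N(x)|_p$, which is \emph{not} uniformly equal to $1/|a|_p$ once $|a|_p\ge p^{N-1}$. Second, the whole sphere analysis hinges on the correct normalisation $v_p(c)=v_p(a)/(N-1)$, which is available precisely because $(N-1)\mid v_p(a)$; once $c$ is chosen this way the transformation of $f_N$ into the prescribed $g_N$ is a routine coefficient match, and the remaining work is purely bookkeeping with $p$-adic valuations.
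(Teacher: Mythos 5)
Your proposal is correct, and its central part takes a genuinely different, more elementary route than the paper. The escape-to-infinity argument and the conjugacy (scaling by $c=p^{v_p(a)/(N-1)}$, which is the inverse of the paper's $\pi(x)=|a|_p^{1/(N-1)}x$) are essentially the paper's own. But for convergence on $D(0,\frac{1}{p}r)$, where $r=|a|_p^{1/(N-1)}$, the paper proceeds in two steps: Lemma \ref{lem3.1} shows every orbit in that disk eventually lands on the sphere $S(0,1/|a|_p)$, and Proposition \ref{pro3.2} constructs $x_0$ by a Hensel-type argument (Lemma \ref{lem2.2}) and then tracks how $f_N$ sends the $p-1$ sub-disks of $S(0,1/|a|_p)$ into the one containing $x_0$, using the exact Lipschitz identity of Corollary \ref{cor2.5}, which rests on the binomial estimate of Lemma \ref{lem2.4} and needs a separate remark for $p=2$. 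You instead run the ultrametric Banach fixed point theorem on the whole disk $D(0,\frac{1}{p}r)$, using only the soft inequality $|x^N-y^N|_p\le |x-y|_p\max(|x|_p,|y|_p)^{N-1}$; this is uniform in $p$ (no $p=2$ case), avoids Hensel's lemma and the binomial machinery entirely, and delivers existence, uniqueness, and global attraction of $x_0$ in one stroke --- uniqueness being exactly what the ``exactly two fixed points'' claim requires, a point the paper leaves implicit. Two small repairs to your write-up: (i) the exclusion of $|x_0|_p=1$ does \emph{not} follow from $x_0\in D(0,\frac1p r)$ --- when $|a|_p\ge p^{N-1}$ that disk does contain points of absolute value $1$ --- but it does follow from the fixed-point equation itself, since $|x_0|_p=1$ would give $|a|_p=|ax_0|_p=|x_0^N+1|_p\le 1$, a contradiction; (ii) to land on the exact formula \eqref{dyn1} you must take $c=p^{v_p(a)/(N-1)}$ itself rather than an arbitrary element of that valuation, since otherwise $c^{N-1}\neq p^{v_p(a)}$ and the coefficient match fails (you would still get a conjugacy, but to a different polynomial of the same shape).
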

\begin{remark}
  The dynamical structure of $(S(0,1), g_N)$ can be described by minimal decomposition. We will show more about this in Theorem \ref{dsy1}.
\end{remark}
When $|a|_p<1$, set $\mathcal{A}_{f_N}:=\left\{x\in\mathbb{Q}_p:|f_N^n(x)|_p=1,\ \forall n\ge 0\right\}$. Let $\sigma$ be the left shift map on $\Sigma_{\ell}:= \{0,\cdots,\ell-1\}^{\mathbb{N}}$. We have the following theorems.
\begin{theorem}\label{sy2}
  Let $p\ge 3$ and let $|a|_p<1$. Let $N=qp^m$ with $gcd(q,p)=1$.% and $m$ being a non-negative integer. %If $|a|_p<1$, we distinguish the following two subcases.
  \begin{itemize}
   \item[(i)] Suppose $gcd(p-1,q)\mid \frac{p-1}{2}$ and $|a|_p<|N|_p$. If $f_N$ has $\ell\le 2$ different fixed points $\{\omega_{i,N}\}_{i=1}^{\ell}$ with $\omega_{i,N}\not\equiv \omega_{j,N}\ ({\rm mod}\ p)$, then $(\mathcal{K}_{f_N}, f_N)$ is topologically conjugate to $(\Sigma_{\ell}, \sigma)$, where 
    \[\mathcal{K}_{f_N}:=\bigcap_{n=0}^{\infty}f_N^{-n}(X)\ \text{with}\ X=\bigsqcup_{i=1}^{\ell} D(\omega_{i,N},\frac{1}{p}).\]
    Moreover, if $|a|_p<|N|_p^2$ and $gcd(p-1,q)\ge 2$, then we have $\ell=gcd(p-1,q)$ and $\mathcal{K}_{f_N}=\mathcal{A}_{f_N}$.
    \item[(ii)] Suppose $gcd(p-1,q)\nmid \frac{p-1}{2}$ or $|a|_p\ge |N|_p$. Then, we have 
    \[\lim_{n\to\infty}f_N^n(x)=\infty,\quad \forall x\in\mathbb{Q}_p\cup\{\infty\}.\]
  \end{itemize}
\end{theorem}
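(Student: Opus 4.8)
The plan is to realize $(X,f_N)$ as a transitive $p$-adic repeller in the sense of Fan, Liao, Wang and Zhou \cite{flw07} whose incidence matrix is the all-ones matrix, and to read off the full shift from this; the two thresholds $|a|_p<|N|_p$ and $|a|_p<|N|_p^2$ will control, respectively, the expansion rate on $X$ and the exact number of fixed points. I would first isolate the trapping region by a valuation count. If $|x|_p>1$ then $|f_N(x)|_p=|x|_p^N/|a|_p>|x|_p$ grows without bound, and if $|x|_p<1$ then $|f_N(x)|_p=1/|a|_p>1$ lands in the previous case; for a unit $u$ one has $|f_N(u)|_p=|u^N+1|_p/|a|_p$, which exceeds $1$ unless $\overline{u}^{\,N}=-1$ in $\mathbb{F}_p$. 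Since $\overline{u}^{\,N}=\overline{u}^{\,q}$ in $\mathbb{F}_p$ (Fermat) and $\gcd(N,p-1)=\gcd(q,p-1)$, the only units that do not escape immediately are those reducing to one of the $\gcd(p-1,q)$ roots of $\overline{x}^{\,q}=-1$, and these roots exist exactly when $\gcd(p-1,q)\mid\frac{p-1}{2}$. This already proves the escape statement of (ii) in the case $\gcd(p-1,q)\nmid\frac{p-1}{2}$, and shows that any orbit that never escapes must remain inside $X=\bigsqcup_i D(\omega_{i,N},\frac{1}{p})$.

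Next I would establish the repeller structure for (i). On each ball $B_i=D(\omega_{i,N},\frac{1}{p})$ I expand $f_N(x)-\omega_{i,N}=a^{-1}(x^N-\omega_{i,N}^N)$ by the binomial theorem; writing $x=\omega_{i,N}+t$ with $|t|_p\le\frac{1}{p}$, the linear term is $a^{-1}N\omega_{i,N}^{N-1}t$ of absolute value $(|N|_p/|a|_p)|t|_p$, and the technical heart is that every higher term $a^{-1}\binom{N}{k}\omega_{i,N}^{N-k}t^{k}$ is strictly smaller. This follows from $v_p\binom{N}{k}\ge v_p(N)-v_p(k)$ together with $k-v_p(k)\ge2$ for every $k\ge2$ when $p\ge3$, so that $v_p\bigl(\binom{N}{k}t^{k}\bigr)\ge m+(k-v_p(k))\ge m+2>m+1=v_p(Nt)$. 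Hence $f_N$ acts on $B_i$ as a scaled isometry of factor $\lambda=|N|_p/|a|_p$, and $|a|_p<|N|_p$ forces $\lambda\ge p$, so that $f_N(B_i)=D(\omega_{i,N},\lambda/p)\supseteq D(\omega_{i,N},1)=\mathbb{Z}_p\supseteq B_j$ for every $j$. Thus $(X,f_N)$ is an expanding, transitive repeller whose incidence matrix is all-ones, and the conjugacy theorem of \cite{flw07} identifies $(\mathcal{K}_{f_N},f_N)$ with the full shift $(\Sigma_\ell,\sigma)$, for any admissible count $\ell$ of fixed balls.

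To pin down $\ell$ and the equality $\mathcal{K}_{f_N}=\mathcal{A}_{f_N}$ under the stronger hypothesis, I would take $\omega_0$ to be the Teichm\"uller lift of a root of $\overline{x}^{\,q}=-1$, so that $\omega_0^{\,q}=-1$ and hence $\omega_0^{\,N}=(-1)^{p^{m}}=-1$ exactly. Then for $h(x)=x^N-ax+1$ one has $h(\omega_0)=-a\omega_0$ with $|h(\omega_0)|_p=|a|_p$ and $|h'(\omega_0)|_p=|N|_p$, so the strong form of Hensel's lemma applies precisely when $|a|_p<|N|_p^2$ and yields a unique fixed point in each of the $\gcd(p-1,q)$ residue balls, giving $\ell=\gcd(p-1,q)$ (the assumption $\gcd(p-1,q)\ge2$ merely guarantees at least two symbols). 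The inclusion $\mathcal{K}_{f_N}\subseteq\mathcal{A}_{f_N}$ is immediate from $X\subseteq S(0,1)$; conversely, an orbit staying in $S(0,1)$ forever must reduce at each step to a root of $\overline{x}^{\,q}=-1$, hence lie in $X$ once all $\gcd(p-1,q)$ roots are realized as the $\omega_{i,N}$, which closes the loop. For the last escape case $|a|_p\ge|N|_p$ (so $v_p(a)\le m$), the same binomial estimate applied to a surviving-candidate unit $u=\omega_0(1+s)$ gives $|f_N(u)|_p=p^{\,v_p(a)-m}|s|_p\le|s|_p<1$, so the orbit drops out of the unit sphere after one step and then escapes by the first paragraph; this completes (ii).

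I expect the main obstacle to be exactly the uniform domination of the higher-order binomial terms in the second and fourth paragraphs: the factor $p^{m}$ in $N=qp^{m}$ makes the linear coefficient $p$-adically small, so one must compare $v_p\binom{N}{k}$ against $k\,v_p(t)$ with care, and this is the precise point where the hypothesis $p\ge3$ and the splitting $N=qp^{m}$ are genuinely needed. It is also the mechanism that separates the two regimes, since the sign of $v_p(a)-m$ is what flips the scaled isometry from expanding ($|a|_p<|N|_p$, giving the full shift) to contracting-then-escaping ($|a|_p\ge|N|_p$).
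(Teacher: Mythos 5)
Your proposal is correct, and it follows the same global skeleton as the paper's proof of Theorem \ref{sy2} (escape analysis off $S(0,1)$ as in Propositions \ref{pro3.6}--\ref{pro3.7}, a scaled-isometry repeller with all-ones incidence matrix fed into Theorem \ref{thm2.6}, a Hensel count of the fixed points, and the same two-inclusion argument for $\mathcal{K}_{f_N}=\mathcal{A}_{f_N}$), but three of your key technical steps are genuinely different and simpler. (1) Where the paper dominates the higher binomial terms via Lemma \ref{lem2.4} (the estimate $|C_N^K|_p\, p^{1-(N-K)}\le |N|_p$, proved by the digit/weight computation resting on Lemma \ref{vobc}), you use $v_p\binom{N}{k}\ge v_p(N)-v_p(k)$ (from $\binom{N}{k}=\frac{N}{k}\binom{N-1}{k-1}$) together with $k-v_p(k)\ge 2$ for $k\ge 2$, $p\ge 3$; this is much shorter but odd-$p$-specific, which suffices for Theorem \ref{sy2}, whereas the paper's lemma is reused in its $p=2$ results. (A cosmetic point: your displayed chain assumes $v_p(t)=1$; for general $|t|_p\le 1/p$ the needed inequality is $(k-1)v_p(t)>v_p(k)$, which follows from the same $k-v_p(k)\ge 2$.) (2) For $\ell=\gcd(p-1,q)$, the paper's Corollary \ref{cor3.9} starts from an arbitrary residue solution and performs a digit-by-digit lifting to modulus $p^{2v_p(N)}$ before invoking the generalized Hensel lemma with $L=2$ and $s=2v_p(N)+1$; your choice of the Teichm\"uller lift $\omega_0$, for which $\omega_0^N=-1$ holds exactly, gives $|h(\omega_0)|_p=|a|_p$ and $|h'(\omega_0)|_p=|N|_p$ at once, so the plain $L=1$ case of Lemma \ref{lem2.2} applies under $|a|_p<|N|_p^2$ --- a real simplification. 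You should add one line noting that a scaled isometry of factor $\lambda\ge p$ on $D(\omega_0,1/p)$ has at most one fixed point there, so that uniqueness holds in the whole residue ball and not merely in the Hensel neighborhood $D(\omega_0,|a/N|_p)$; the count $\ell=\gcd(p-1,q)$ then follows. (3) For the escape case $|a|_p\ge |N|_p$, the paper's Proposition \ref{pro3.10} works through a generator $g$ of $\mathbb{F}_p^*$ and the expansion of $(-1+kp)^{p^m}$ to rule out $|j^N+1|_p=|a|_p$; your direct linearization $u=\omega_0(1+s)$, giving $|u^N+1|_p=|N|_p|s|_p$ and hence $|f_N(u)|_p\le |s|_p<1$, reaches the same conclusion more transparently (handle $s=0$ separately via $f_N(\omega_0)=0$, which also leaves $S(0,1)$). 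In each instance you trade the paper's all-$p$ machinery for odd-$p$ shortcuts exploiting the exact relation $\omega_0^N=-1$, and nothing essential is lost for this theorem.
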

\begin{theorem}\label{sy3}
  If $|a|_2<1$, then the dynamical system $(\mathbb{P}^1(\mathbb{Q}_2), f_N)$ is described as follows.
  \begin{itemize}
    \item[(i)] If $|a|_2=2^{-k}$ with $k\ge 1$ and $2\nmid N$, then
    $(\mathbb{P}^1(\mathbb{Q}_2), f_N)$ has exactly two fixed points $\infty$ and $x_0\in D(2^k-1,2^{-k-1})$. Moreover, 
    \[\lim_{n\to\infty}f_N^n(x)=\infty,\quad \forall x\neq x_0.\] 
    \item[(ii)] If $|a|_2\le \frac{1}{4}$ and $2\mid N$, then we have 
    \[\lim_{n\to\infty}f_N^n(x)=\infty,\quad \forall x\in \mathbb{Q}_2\cup\{\infty\}.\]
    \item[(iii)] If $|a|_2= \frac{1}{2}$ and $|N|_2\le \frac{1}{4}$, then $f_N$ has only one fixed point $x_0\in S(0,1)$. Moreover, 
    \[\lim_{n\to\infty}f_N^n(x)=
      \begin{cases}
      x_0 & \text{if}\ x\in S(0,1),\\
      \infty & \text{if}\ x\notin S(0,1).
      \end{cases}
    \]
    \item[(iv)] If $|a|_2=\frac{1}{2}$ and $|N|_2=\frac{1}{2}$, then we have
    \[\lim_{n\to\infty}f_N^n(x)=\infty,\quad\forall x\not\in S(0,1)\]
    and the subsystem $\left(S(0,1), f_N\right)$ is topologically conjugate to $(\mathbb{Z}_p, h_N)$ where $h_N\in\mathbb{Z}_p[x]$ is defined by
      \begin{align}\label{hn}
        h_N(x)=\frac{(2x+1)^N+1-a}{2a}.
      \end{align}
  \end{itemize}
\end{theorem}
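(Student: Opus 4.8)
The plan is to reduce every case to the behaviour on the unit sphere $S(0,1)$, which for $p=2$ is the single disk $1+2\mathbb{Z}_2=D(1,\tfrac12)$, and then split according to the parities and $2$-adic valuations prescribed by the four cases. First I would record the off-sphere behaviour, which is uniform throughout: if $|x|_2>1$ then $|x^N+1|_2=|x|_2^N$, so $|f_N(x)|_2=|x|_2^N|a|_2^{-1}>|x|_2$ and the valuations of the iterates decrease without bound, giving $f_N^n(x)\to\infty$; if $|x|_2<1$ then $|f_N(x)|_2=|a|_2^{-1}>1$, so one iterate lands in the previous regime and again $f_N^n(x)\to\infty$. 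Since $f_N$ is a polynomial of degree $N\ge 2$, $\infty$ is a (super-attracting) fixed point. One checks directly that every finite fixed point lies on $S(0,1)$, because $x^N+1=ax$ is impossible when $|x|_2\ne 1$ and $|a|_2<1$. Thus all nontrivial dynamics happens on $S(0,1)$.

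Next I would compute $|f_N(x)|_2$ for a unit $x$. Writing $x\equiv 1\pmod 2$ and using that units have order dividing $2$ modulo $8$, one gets $x^N\equiv 1\pmod 8$ when $N$ is even and $x^N\equiv x\pmod 8$ when $N$ is odd; hence $|x^N+1|_2=\tfrac12$ for $N$ even, while $|x^N+1|_2=|x+1|_2$ for $N$ odd (the cofactor of $x+1$ being a sum of $N$ odd terms). Consequently, with $|a|_2=2^{-k}$, for $x\in S(0,1)$,
\[
|f_N(x)|_2=\tfrac12\,|a|_2^{-1}=2^{k-1}\ \ (N\text{ even}),\qquad |f_N(x)|_2=|x+1|_2\,|a|_2^{-1}\ \ (N\text{ odd}).
\]
This single computation separates the cases: $N$ even with $k\ge 2$ gives $|f_N(x)|_2=2^{k-1}>1$, so the sphere is expelled and every point escapes, which is case (ii); $N$ even with $k=1$ gives $|f_N(x)|_2=1$, so $S(0,1)$ is invariant, which is cases (iii)–(iv); and $N$ odd is case (i).

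For case (i) I would locate the finite fixed point via the Newton polygon of $x^N-ax+1$, whose vertices $(0,0),(1,k),(N,0)$ force all roots to be units; reducing mod $2$, since $N$ is odd the polynomial $x^N+1\in\mathbb{F}_2[x]$ has $1$ as a simple root (the cofactor evaluates to $N\equiv 1$), so Hensel's lemma gives a unique $\mathbb{Q}_2$-root $x_0$, i.e. exactly one finite fixed point. Refining the approximation yields $v_2(x_0+1)=k$, and since the leading unit digit is forced odd, $x_0\equiv 2^k-1\pmod{2^{k+1}}$, so $x_0\in D(2^k-1,2^{-k-1})$. The dynamical input is the exact expansion identity on the whole sphere: for units $x$ the cofactor $\sum_{i=0}^{N-1}x^{N-1-i}x_0^{\,i}$ is a sum of $N$ (odd many) units, hence a unit, so $|f_N(x)-x_0|_2=|x^N-x_0^N|_2/|a|_2=2^k|x-x_0|_2$. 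Because $|f_N^n(x)-x_0|_2\le 1$ as long as the orbit stays on $S(0,1)$, the factor $2^{nk}$ forces every orbit with $x\ne x_0$ to leave the sphere in finitely many steps, after which the off-sphere analysis sends it to $\infty$.

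For case (iii) ($k=1$, $4\mid N$) I would show $f_N$ contracts the complete disk $D(1,\tfrac12)$ into itself: the derivative satisfies $|f_N'(x)|_2=2|N|_2\le\tfrac12$, and a bookkeeping of the Taylor coefficients $\tfrac1a\binom{N}{j}x^{N-j}$ (using $v_2\binom{N}{j}\ge v_2(N)-v_2(j)$ and $|x-y|_2\le\tfrac12$) bounds every higher-order term by $\tfrac12|x-y|_2$, so $f_N$ is a genuine $\tfrac12$-contraction; the $2$-adic contraction principle then gives the unique attracting fixed point $x_0\in S(0,1)$ with $f_N^n\to x_0$ on the sphere. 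Case (iv) ($k=1$, $N\equiv 2\pmod 4$) is handled by the explicit conjugacy $\psi(x)=2x+1$, a homeomorphism $\mathbb{Z}_2\to 1+2\mathbb{Z}_2=S(0,1)$ for which a direct computation gives $f_N\circ\psi=\psi\circ h_N$, and expanding $(2x+1)^N$ shows that $N\equiv 2\pmod 4$ is exactly what forces all coefficients of $h_N$ into $\mathbb{Z}_2$, completing the conjugacy. The step I expect to require the most care is the $2$-adic valuation bookkeeping that simultaneously pins the unique fixed point and its disk in (i), verifies the strict contraction estimate by controlling all higher Taylor terms in (iii), and confirms $h_N\in\mathbb{Z}_2[x]$ in (iv); the conceptual content is light, but the boundary value $k=1$ and the residue $N\bmod 4$ must be tracked precisely to keep the four cases disjoint and exhaustive.
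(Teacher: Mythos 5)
Your proposal is correct, but it takes a noticeably different route from the paper. The paper runs everything through its general-$p$ machinery: the binomial-valuation estimate (Lemma \ref{lem2.4}) gives the local isometry/expansion $|f_N(x)-f_N(y)|_2=2|N|_2|a|_2^{-1}|x-y|_2$ on each half-disk $D(\pm 1,\tfrac14)$, and the four cases are then settled by computing images of these disks: for (i) it partitions $S(0,1)$ into $D(-1,2^{-k-1})\sqcup\bigsqcup_{i=1}^k D(2^i-1,2^{-i-1})$, shows $|f_N|_2=2^{k-i}$ on $D(2^i-1,2^{-i-1})$ so only the last disk returns to the sphere, locates $x_0$ by an induction on solutions of $x^N+1\equiv 0\ ({\rm mod}\ 2^t)$ plus Lemma \ref{lem2.2}, and uses the expansion to expel everything else; for (iii) it applies Hensel after a case split $a\equiv 2$ or $6\ ({\rm mod}\ 8)$. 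You instead exploit $p=2$ directly: parity of $N$ via $x^2\equiv 1\ ({\rm mod}\ 8)$ and, for odd $N$, the factorization $x^N+1=(x+1)\sum_i(-x)^i$ whose cofactor is a sum of an odd number of odd terms, hence a unit. This buys you three genuine simplifications: in (i) the \emph{global} exact expansion $|f_N(x)-x_0|_2=2^k|x-x_0|_2$ on all of $S(0,1)$ (the paper only has it per half-disk), so escape follows from unboundedness in one stroke; in (iii) the Banach contraction principle on the single disk $S(0,1)=D(1,\tfrac12)$, avoiding the $a\ ({\rm mod}\ 8)$ case split; and fixed-point counting by Newton polygon plus Hensel rather than induction modulo $2^t$. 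The trade-off is that the paper's Lemma \ref{lem2.4} and Corollary \ref{cor2.5} are reused for all $p$ elsewhere, whereas your arguments are $2$-adic-specific.

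Two small inaccuracies, neither fatal: the point $(1,k)$ is not a vertex of the Newton polygon of $x^N-ax+1$ (it lies strictly above the segment from $(0,0)$ to $(N,0)$), though your conclusion that all roots are units is exactly right; and in (iv) the condition $N\equiv 2\ ({\rm mod}\ 4)$ is \emph{not} what forces $h_N\in\mathbb{Z}_2[x]$ --- evenness of $N$ together with $|a|_2=\tfrac12$ already suffices (the coefficient of $x$ is $N/a$ and the higher ones are $\binom{N}{j}2^{j-1}/a$), so integrality also holds in case (iii); the role of $|N|_2=\tfrac12$ is only that the conjugated dynamics is then nontrivial rather than a contraction. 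Under the hypotheses of (iv) your verification is still valid, so the proof stands.
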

When $|a|_p=1$, we have the following theorem.
\begin{theorem}\label{sy4}
  If $|a|_p=1$, then we have 
  \[\lim_{n\to\infty}f_N^n(x)=\infty,\quad \forall x\not\in \mathbb{Z}_p.\]
\end{theorem}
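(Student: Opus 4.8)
The plan is to show that the region $\mathbb{Q}_p\setminus\mathbb{Z}_p=\{x\in\mathbb{Q}_p:|x|_p>1\}$ is forward-invariant under $f_N$ and that the $p$-adic absolute value of every orbit starting there grows without bound. The starting observations are purely metric: since $x\notin\mathbb{Z}_p=D(0,1)$ is equivalent to $|x|_p>1$, and since $|a|_p=1$, we have $|f_N(x)|_p=\bigl|\tfrac{x^N+1}{a}\bigr|_p=|x^N+1|_p$ for every finite $x$.

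Next I would invoke the ultrametric (strong triangle) inequality. For $|x|_p>1$ we get $|x^N|_p=|x|_p^N>1=|1|_p$, so the two summands $x^N$ and $1$ have distinct absolute values and therefore $|x^N+1|_p=\max\{|x^N|_p,1\}=|x|_p^N$. Combined with the previous step this gives the key identity $|f_N(x)|_p=|x|_p^N$. Because $N\ge 2$ and $|x|_p>1$, we conclude $|f_N(x)|_p=|x|_p^N\ge|x|_p^2>|x|_p>1$, so $f_N(x)$ again lies outside $\mathbb{Z}_p$; this proves the forward-invariance of the region and, at the same time, strict growth of the absolute value.

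With invariance established, a straightforward induction on $n$ yields $|f_N^n(x)|_p=|x|_p^{N^n}$ for all $n\ge 0$. Since $|x|_p\ge p>1$ and $N\ge 2$, the exponent $N^n$ tends to infinity, whence $|f_N^n(x)|_p\to\infty$, which is exactly the assertion $f_N^n(x)\to\infty$ in $\mathbb{P}^1(\mathbb{Q}_p)$. The remaining point $x=\infty$ is immediate, as $\infty$ is a fixed point of the polynomial $f_N$ and its orbit is constant.

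I do not expect a genuine obstacle: the whole argument reduces to one ultrametric computation. The only points deserving a line of care are the clean application of the strong triangle inequality, so that one obtains the equality $|x^N+1|_p=|x|_p^N$ rather than a mere inequality, and the observation that the growth is strict (using $N\ge 2$), which guarantees that the orbit truly escapes to $\infty$ instead of accumulating at a finite absolute value.
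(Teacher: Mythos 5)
Your proposal is correct and is essentially the paper's own argument: the paper proves Theorem \ref{sy4} by referring to the proof of Proposition \ref{pro3.6}, which is exactly the ultrametric computation you carry out, namely that $|x|_p>1$ forces $|f_N(x)|_p=|x^N+1|_p=|x|_p^N>|x|_p$, so the region $|x|_p>1$ is forward-invariant and orbits there escape to $\infty$. Your explicit induction giving $|f_N^n(x)|_p=|x|_p^{N^n}$ is a slightly more careful formulation of the same growth argument.
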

Now, we focus on three $p$-adic polynomial dynamical systems: the system $(S(0,1),g_N)$ in Theorem \ref{sy1}, the system $(\mathbb{Z}_p,h_N)$ in Theorem \ref{sy3} and the system $(\mathbb{Z}_p,f_N)$ in Theorem \ref{sy4}. These dynamical systems are often described by their minimal subsystems. Considering a dynamical system $(X,T)$. A subset $E\subset X$ is \textit{invariant} if $T(E)\subset E$. Then, we have a subsystem $(E,T|_E)$. The subsystem $(E,T|_E)$ is \textit{minimal} if the orbit of any point in $E$ is dense in $E$. In this case, we also say that $E$ is a \textit{minimal set}.   
\par
We underline that the behavior of $(S(0,1),g_N),\ (\mathbb{Z}_p,h_N)$ and $(\mathbb{Z}_p,f_N)$ highly depends on parameters $N$ and $a$. %So it is hard to describe the whole system for all conditions, even by using the method in \parencite{fl11}, one can describe the whole system for any fixed $N$ and $a$. 
So, instead of considering the whole system for all $N$ and $a$, we focus on some special cases.
\par
For the system $(S(0,1),g_N)$, we provide a necessary and sufficient condition under which the whole system is minimal.
\begin{theorem}\label{dsy1}
  Suppose $|a|_p>1$ and $(N-1)\mid v_p(a)$. Define $g_N$ as \eqref{dyn1}.
  \begin{enumerate}[(i)]
    \item If $p=2$, then the whole dynamical system $(S(0,1),g_N)$ can never be minimal.
    \item If $p=3$, then the whole dynamical system $(S(0,1),g_N)$ is minimal if and only if both of the following two conditions hold:
    \begin{enumerate}[1)]
      \item $N\equiv 1\ ({\rm mod}\ 2)$ and $|a|_3\cdot a\equiv 2\ ({\rm mod}\ 3)$.
      \item $\left(g_N^2\right)^{\prime}(1)\equiv 1\ ({\rm mod}\ 3)$, 
      \[\frac{g^2_N(1)-1}{3}\not\equiv 0 \ ({\rm mod}\ 3),\quad \frac{g^2_N(1)-1}{3}\not\equiv \frac{\left(g_N^2\right)^{\prime\prime}(1)}{2}\ ({\rm mod}\ 3).\]
    \end{enumerate}
    \item If $p\ge 5$, then the whole dynamical system $(S(0,1),g_N)$ is minimal if and only if both of the following two conditions hold:
    \begin{enumerate}[1)]
      \item Letting $m\in\mathbb{N}$ be such that $|a|_p\cdot ag^{m}\equiv 1\ ({\rm mod}\ p)$, where $g$ denotes a generator of $\mathbb{F}_p^*$, we have
      \begin{align}\label{bcy}
        \min\left\{n\ge 1:\frac{N^n-1}{N-1}m\equiv 0\ ({\rm mod}\ p-1)\right\}=p-1.
      \end{align}
      \item $\left(g_N^{p-1}\right)^{\prime}(1)\equiv 1\ ({\rm mod}\ p)$ and
      \[\frac{g^{p-1}_N(1)-1}{p}\not\equiv 0\ ({\rm mod}\ p).\]
    \end{enumerate}
  \end{enumerate}
\end{theorem}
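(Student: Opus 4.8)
The plan is to reduce the minimality of $(S(0,1),g_N)$ to a question about a single polynomial map on $\mathbb{Z}_p$ and then feed that into the minimal-decomposition machinery of Fan and Liao. First I would reduce $g_N$ modulo $p$. Writing $a=u\,p^{v_p(a)}$ with $u$ a unit and using $(N-1)\mid v_p(a)$, one checks that $g_N\in\mathbb{Z}_p[x]$, that $g_N$ maps $S(0,1)$ into itself, and that on $S(0,1)$ one has $g_N(x)\equiv (a|a|_p)^{-1}x^N\ (\mathrm{mod}\ p)$ (the constant term of $g_N$ lies in $p\mathbb{Z}_p$). Identifying $S(0,1)\bmod p$ with $\mathbb{F}_p^*$ and writing its elements as powers of a generator $g$, the reduced map becomes the affine map $j\mapsto Nj+m$ on $\mathbb{Z}/(p-1)\mathbb{Z}$, where $m$ is the discrete logarithm fixed by $|a|_p\,a\,g^m\equiv 1\ (\mathrm{mod}\ p)$. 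Since under reduction every orbit of $g_N$ on $S(0,1)$ must be dense in $\mathbb{F}_p^*$, minimality forces this affine map to be a single $(p-1)$-cycle; computing its iterates $\psi^n(0)=m\frac{N^n-1}{N-1}$ shows this is exactly condition \eqref{bcy}, which specializes to the stated explicit form for $p=3$ and which forces $\gcd(N,p-1)=1$, hence $N$ odd (for $p\ge 3$). This yields condition 1) as a necessary condition and reduces the problem to whether the single cycle of $p-1$ balls $\{D(\omega_i,1/p)\}$ is a minimal set.

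Next, assuming condition 1), I would pass to the first-return map $\Phi:=g_N^{p-1}$ on the ball $D(1,1/p)$ and rescale by $x=1+py$ to obtain $\widetilde\Phi\in\mathbb{Z}_p[x]$ on $\mathbb{Z}_p$; by the standard transfer from the cycle of balls to its return map, $(S(0,1),g_N)$ is minimal iff $(\mathbb{Z}_p,\widetilde\Phi)$ is minimal. Using Wilson's theorem $\prod_{t\in\mathbb{F}_p^*}t\equiv -1$ together with Fermat's theorem, a short computation gives $\Phi'(1)=(g_N^{p-1})'(1)\equiv(-1)^{N-1}\ (\mathrm{mod}\ p)$, which equals $1$ precisely because condition 1) forces $N$ odd. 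Since $\Phi'(1)\equiv 1\ (\mathrm{mod}\ p)$, the derivative $\widetilde\Phi'$ is $\equiv 1\ (\mathrm{mod}\ p)$ on all of $\mathbb{Z}_p$ and the Taylor coefficients of $\widetilde\Phi$ of order $\ge 2$ are $\equiv 0\ (\mathrm{mod}\ p)$; hence $\widetilde\Phi(y)\equiv y+c\ (\mathrm{mod}\ p)$ with $c\equiv\frac{g_N^{p-1}(1)-1}{p}\ (\mathrm{mod}\ p)$. Thus $\widetilde\Phi$ induces a full $p$-cycle on $\mathbb{F}_p$ iff $c\not\equiv 0\ (\mathrm{mod}\ p)$, the displacement part of condition 2); the multiplier part $(g_N^{p-1})'(1)\equiv 1\ (\mathrm{mod}\ p)$ is automatic from 1). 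In the language of the minimal decomposition, conditions 1)--2) say the cycle is full at level $1$ and genuinely grows at level $2$.

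The heart of the argument is then to show that for $p\ge 5$ this growth propagates to every level (so that 1)--2) are also sufficient), while for $p=3$ one extra congruence is needed. Here I would apply the growing/splitting dichotomy of Fan and Liao to the successive return maps. Passing from level $n$ to $n+1$ requires the level-$n$ displacement, whose leading deviation from a pure translation is governed by the quadratic coefficient of $\widetilde\Phi$ together with $\sum_{j=0}^{p-1}j^2$. Because the multiplier stays $\equiv 1\ (\mathrm{mod}\ p)$ at every level (as a product of factors $\equiv 1$) and $\sum_{j=0}^{p-1}j^2=\frac{(p-1)p(2p-1)}{6}\equiv 0\ (\mathrm{mod}\ p)$ for $p\ge 5$, the quadratic term contributes at valuation $\ge 2$ and the displacement remains a unit, so the cycle grows at every level and $(\mathbb{Z}_p,\widetilde\Phi)$ is minimal. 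For $p=3$ the cancellation fails, since $\sum_{j=0}^{2}j^2\equiv 2\ (\mathrm{mod}\ 3)$; carrying out the level-$2$ iteration of $\widetilde\Phi$ explicitly gives displacement $\equiv c+2bc^2\ (\mathrm{mod}\ 3)$ with $b=\frac{(g_N^2)''(1)}{2}$ and $c=\frac{g_N^2(1)-1}{3}$, and requiring this to be a unit is equivalent (as $c$ is a unit) to $\frac{g^2_N(1)-1}{3}\not\equiv\frac{(g_N^2)''(1)}{2}\ (\mathrm{mod}\ 3)$, which is exactly the extra part of condition 2) for $p=3$ (the congruence $(g_N^2)'(1)\equiv 1\ (\mathrm{mod}\ 3)$ being automatic from 1)).

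For $p=2$, the residue field unit group $\mathbb{F}_2^*=\{1\}$ collapses the level-$1$ cycle to the single ball $S(0,1)=1+2\mathbb{Z}_2$, so the whole question is the minimality on $\mathbb{Z}_2$ of the rescaled map; here I would invoke the $p=2$ part of Fan and Liao's analysis (the same tools that settle their complete classification of quadratic polynomials on $\mathbb{Z}_2$) to see that the second-order coefficient obstructs sustained growth, so the lifted cycle must eventually split and no minimal set can exhaust $S(0,1)$. The main obstacle is precisely this propagation step: the first two paragraphs only set up the level-$1$ and level-$2$ data, and the real work is controlling the behavior at all higher levels through the Fan--Liao recursion for the multiplier and displacement of a cycle as it lifts, where the delicate points are the uniform cancellation $\sum_{j=0}^{p-1}j^k\equiv 0\ (\mathrm{mod}\ p)$ for $p\ge 5$, its failure at $p=3$, and the genuinely obstructive second-order term at $p=2$.
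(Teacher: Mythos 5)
Your treatment of $p\ge 3$ is essentially the paper's own route: your discrete-logarithm analysis of the level-$1$ cycle is exactly Proposition \ref{pro4.3}, and your propagation-of-growth argument (multiplier a product of factors $\equiv 1$, second-order contribution controlled by $\sum_{j=0}^{p-1}j^2$ for $n=1$ and by the valuation gap $2n\ge n+2$ for $n\ge 2$) is a hand re-derivation of the DesJardins--Zieve lifting result that the paper simply cites as Proposition \ref{dt2}. But two points are genuine gaps. The first is part (i): for $p=2$ you have no argument at all, only an appeal to a general principle (``the second-order coefficient obstructs sustained growth'') that does not exist. There are polynomials of degree $\ge 2$ that \emph{are} minimal on $1+2\mathbb{Z}_2=S(0,1)$: for instance $x\mapsto x+2+2(x-1)^2$, which is conjugate to $1+x+4x^2$ on $\mathbb{Z}_2$, a map that is transitive mod $8$ and hence minimal on $\mathbb{Z}_2$ by Larin's criterion. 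So non-minimality for $p=2$ is a feature of the specific family $g_N$, not of quadratic terms in general, and it must be proved by computation with the actual coefficients of $g_N$. This is what the paper does in Proposition \ref{pro4.2}: growth at level $1$ forces $N$ odd and $a_1=1$ (the second $2$-adic digit of the unit part of $a$), and then an explicit expansion shows $(1+2^{Ns})^N+2^{N^2s}a^N-2^{(N^2-1)s}a^{N+1}\equiv 0 \ ({\rm mod}\ 8)$, i.e.\ $\beta_2(1)\equiv 0\ ({\rm mod}\ 2)$, so the cycle always splits at level $2$. Your sketch omits this entirely, and no citation can replace it.

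The second gap is your claim that the multiplier condition $(g_N^{p-1})'(1)\equiv 1\ ({\rm mod}\ p)$ is automatic from condition 1). Condition \eqref{bcy} forces $\gcd(N,p-1)=1$ but \emph{not} $\gcd(N,p)=1$, and your Wilson--Fermat computation uses $N^{p-1}\equiv 1\ ({\rm mod}\ p)$, which fails when $p\mid N$. Concretely, take $p=3$, $N=3$, $a=2/9$: then $(N-1)\mid v_3(a)$, $|a|_3=9>1$, $N$ is odd and $|a|_3\cdot a=2\equiv 2\ ({\rm mod}\ 3)$, so condition 1) of part (ii) holds; here $g_N(x)=\frac{x^3+27}{2}$, the displacement $\frac{g_N^2(1)-1}{3}=\frac{923}{2}$ is a $3$-adic unit, yet $(g_N^2)'(1)\equiv 0\ ({\rm mod}\ 3)$, the cycle grows tails, and by Theorem \ref{grta} the system has an attracting periodic orbit, hence is not minimal. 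So the streamlined equivalence your argument actually establishes --- ``minimal iff 1) plus the displacement condition'' --- is false; the multiplier hypothesis in 2) is not redundant and must be kept as a standing hypothesis (as the theorem states it), or the case $p\mid N$ must be excluded by a separate grows-tails argument.
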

For the system $(\mathbb{Z}_p,h_N)$, the behavior could be very complex. We only point out that the case $N=2$ and $p=2$ is fully described by Fan and Liao in \cite{fl11}.\par
For the system $(\mathbb{Z}_p,f_N)$ with $|a|_p=1$, we only consider the cases $p=2$ and $p=3$.\par
When $p=2$, we can fully describe $(\mathbb{Z}_2,f_N)$ with $|a|_2=1$.
\begin{theorem}\label{dsy2}
  Suppose $|a|_2=1$. Then, the function $f_N$ admits a $2$-periodic point $x_0$ in $\mathbb{Z}_p$ and $\mathbb{Z}_p$ is contained in the attracting basin of the periodic orbit $\left(x_0,\ f_N(x_0)\right)$.
\end{theorem}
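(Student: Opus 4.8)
The plan is to exploit the fact that, when $|a|_2=1$, the reduction of $f_N$ modulo $2$ interchanges the two residue classes of $\mathbb{Z}_2$, so that the dynamics on $\mathbb{Z}_2$ is governed by a genuine attracting $2$-cycle rather than by the repeller or minimal-decomposition machinery used elsewhere. Since $a\in\mathbb{Z}_2^*$, we have $f_N(x)=\tfrac1a(x^N+1)\in\mathbb{Z}_2[x]$, and $f_N$ maps $\mathbb{Z}_2$ into $\mathbb{Z}_2$. First I would compute the reduction $\bar f_N$ on $\mathbb{F}_2=\{0,1\}$: because $\bar a=1$, one gets $\bar f_N(0)=1$ and $\bar f_N(1)=0$, so $f_N$ swaps the disks $D(0,\tfrac12)=2\mathbb{Z}_2$ and $D(1,\tfrac12)=1+2\mathbb{Z}_2$. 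In particular $f_N$ has no fixed point in $\mathbb{Z}_2$ (the congruence $x^N+1\equiv ax\pmod 2$ has no solution), so any point of period $2$ is a genuine $2$-cycle; this reduces the problem to studying $g:=f_N^2$, which now preserves each of the two residue disks.

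The core step is to show that $g$ contracts on each residue disk. I would compute $g'(y)=f_N'(f_N(y))f_N'(y)=\tfrac{N^2}{a^2}\,(y\,f_N(y))^{N-1}$ and observe that $y\,f_N(y)\equiv 0\pmod 2$ for every $y\in\mathbb{Z}_2$ (if $y$ is even this is clear; if $y$ is odd then $y^N+1$ is even, hence $f_N(y)$ is even). Therefore $|g'(y)|_2\le |N|_2^2\,(\tfrac12)^{N-1}\le(\tfrac12)^{N-1}\le\tfrac12$, using $N\ge 2$ and $|a|_2=1$. Writing the integral Taylor expansion $g(x)=\sum_{k\ge 0}b_k(y)(x-y)^k$, where $b_k(y)=\tfrac{g^{(k)}(y)}{k!}\in\mathbb{Z}_2$ because $g\in\mathbb{Z}_2[x]$ and $y\in\mathbb{Z}_2$, I would bound, for $x,y$ in the same residue disk (so $|x-y|_2\le\tfrac12$), the term $k=1$ by $|g'(y)|_2|x-y|_2\le\tfrac12|x-y|_2$ and each term $k\ge 2$ by $|x-y|_2^k\le\tfrac12|x-y|_2$. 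This yields $|g(x)-g(y)|_2\le\tfrac12|x-y|_2$ on each of $D(0,\tfrac12)$ and $D(1,\tfrac12)$. As these disks are complete (closed in $\mathbb{Z}_2$) and $g$-invariant, the ultrametric Banach fixed-point theorem gives a unique $g$-fixed point in each, say $x_0\in D(0,\tfrac12)$ and $x_0'\in D(1,\tfrac12)$.

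It remains to assemble the cycle and the basin. Since $f_N$ swaps the two disks, $f_N(x_0)\in D(1,\tfrac12)$ is again a $g$-fixed point, so by uniqueness $f_N(x_0)=x_0'$ and likewise $f_N(x_0')=x_0$; thus $\{x_0,f_N(x_0)\}$ is a single $2$-cycle (not a fixed point, by the mod-$2$ obstruction above). Finally, for any $x\in\mathbb{Z}_2$ the contraction gives $g^n(x)\to x_0$ or $x_0'$ according to the residue disk of $x$; applying $f_N$ and using its continuity, the subsequences $f_N^{2n}(x)$ and $f_N^{2n+1}(x)$ converge to the two cycle points, so every point of $\mathbb{Z}_2$ lies in the attracting basin of $(x_0,f_N(x_0))$.

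I expect the main obstacle to be pinning down the strict contraction constant: $f_N$ itself need not contract on the odd disk, where $|f_N'(y)|_2=|N|_2$ equals $1$ when $N$ is odd, and only the composition $g=f_N^2$ is guaranteed to contract, since exactly one of the two half-steps passes through the even disk and supplies the factor $(\tfrac12)^{N-1}<1$. Care is therefore needed to (a) work with $g$ rather than $f_N$, (b) restrict to a single residue disk so that the higher-order Taylor terms are also $\le\tfrac12|x-y|_2$, and (c) use $N\ge 2$ to make the derivative bound strict.
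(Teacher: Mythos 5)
Your proof is correct, but it takes a genuinely more self-contained route than the paper. The paper's own argument is three lines: it checks $|f_N(0)-1|_2=|a-1|_2\le\tfrac12$ and $|f_N(1)|_2=\tfrac12$, so the reduction of $f_N$ modulo $2$ is the $2$-cycle $\sigma=(0,1)$ --- the same mod-$2$ swap you start from --- and then observes that $\alpha_1(0)=(f_N^2)'(0)=0$ (since $f_N'(0)=0$ for $N\ge 2$), so $\sigma$ \emph{grows tails} in the Fan--Liao classification; the conclusion is then immediate from the cited black-box result (Theorem \ref{grta}, from \cite{fl11}) that a growing-tails $k$-cycle $(x_1,\dots,x_k)$ at level $n$ yields a $k$-periodic point whose attracting basin contains $\bigsqcup_{i}(x_i+p^n\mathbb{Z}_p)$, which here is all of $\mathbb{Z}_2$. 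You establish the same two ingredients (the swap of residue disks and the smallness of the derivative of $g=f_N^2$), but instead of invoking the growing-tails theorem you reprove the relevant special case by hand: the uniform bound $|g'(y)|_2\le\tfrac12$ at \emph{every} $y\in\mathbb{Z}_2$ (not just at the cycle point, which is all the paper computes), integrality of the Taylor coefficients of $g$, and the ultrametric Banach fixed point theorem on each residue disk, followed by the uniqueness argument identifying $f_N(x_0)$ with the fixed point of $g$ in the other disk. What the paper's route buys is brevity and uniformity with the rest of the paper, where the same cycle-classification machinery drives Theorems \ref{dsy1} and \ref{dsy3}; what your route buys is independence from the cited lemma, plus quantitative extras (an explicit contraction factor $\tfrac12$ per two iterations, uniqueness of the $2$-cycle, and the observation that $f_N$ has no fixed point in $\mathbb{Z}_2$). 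Both arguments are valid; yours is essentially an unpacked proof of the instance of Theorem \ref{grta} that the paper uses.
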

Now, we will show that even we suppose $p=3$ and $a\equiv 1\ ({\rm mod}\ 3)$, the dynamical behavior of the system $(\mathbb{Z}_p,f_N)$ is still very complex.
\begin{theorem}\label{dsy3}
  Suppose $p=3$ and $a\equiv 1\ ({\rm mod}\ 3)$. Then, the system $(\mathbb{Z}_3,f_N)$ can be partially described as follows.
  \begin{enumerate}[(i)]
    \item If $a\equiv 1\ ({\rm mod}\ 3)$ and $2\nmid N$, then $f_N$ admits a $3$-periodic point $x_0$ in $\mathbb{Z}_3$ and $\mathbb{Z}_3$ is contained in the attracting basin of the periodic orbit $\left(x_0,\ f_N(x_0),\ f^2_N(x_0)\right)$.
    \item If $a\equiv 1\ ({\rm mod}\ 3)$ and $2\mid N$, then $f_N(3\mathbb{Z}_3)\subset f_N(1+3\mathbb{Z}_3),\ f_N(1+3\mathbb{Z}_3)\subset f_N(2+3\mathbb{Z}_3)$ and $2+3\mathbb{Z}_3$ is an $f_N$-invariant set.
    \begin{enumerate}[1)]
      \item If $6\mid N$, then $f_N$ admits a fixed point $x_0$ in $2+3\mathbb{Z}_3$ and $2+3\mathbb{Z}_3$ is contained in the attracting basin of $x_0$.
      \item The compact open set $(2+9\mathbb{Z}_3)\bigsqcup (8+9\mathbb{Z}_3)$ is minimal if and only if $a\equiv 1\ ({\rm mod}\ 9)$ and $N\equiv 4,16\ ({\rm mod}\ 18)$.
      \item The compact open set $(2+3\mathbb{Z}_3)$ is minimal if and only if $a\equiv 4\ ({\rm mod}\ 9)$ and $N\equiv 2\ ({\rm mod}\ 6)$.
      \item The compact open set $(5+9\mathbb{Z}_3)\bigsqcup (8+9\mathbb{Z}_3)$ is minimal if and only if $a\equiv 4\ ({\rm mod}\ 9)$ and $N\equiv 10,16\ ({\rm mod}\ 18)$.
      \item The compact open set $(2+9\mathbb{Z}_3)\bigsqcup (5+9\mathbb{Z}_3)$ is minimal if and only if $a\equiv 7\ ({\rm mod}\ 9)$ and $N\equiv 4,10\ ({\rm mod}\ 18)$.
      \item The compact open sets $(2+9\mathbb{Z}_3),\ (5+9\mathbb{Z}_3)$ and $(8+9\mathbb{Z}_3)$ are minimal if and only if $a\equiv 7\ ({\rm mod}\ 27)$ and $N\equiv 2\ ({\rm mod}\ 18)$ or $a\equiv 16\ ({\rm mod}\ 27)$ and $N\equiv 8,14\ ({\rm mod}\ 18)$.
    \end{enumerate}
  \end{enumerate}
\end{theorem}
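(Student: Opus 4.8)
The plan is to study $(\mathbb{Z}_3,f_N)$ through its reductions modulo successive powers of $3$. Since $a\equiv 1\pmod 3$ forces $|a|_3=1$ and $1/a\equiv 1\pmod 3$, we have $f_N(x)\equiv x^N+1\pmod 3$, and a direct computation in $\mathbb{F}_3$ separates the two regimes: for $N$ odd the induced map is the $3$-cycle $0\mapsto 1\mapsto 2\mapsto 0$, while for $N$ even it is $0\mapsto 1\mapsto 2\mapsto 2$, so that $2+3\mathbb{Z}_3$ is invariant and $f_N^2(\mathbb{Z}_3)\subseteq 2+3\mathbb{Z}_3$; the inclusions recorded in the preamble of (ii) are read off the same table. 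Throughout I will use $f_N'(x)=Nx^{N-1}/a$, so that $|f_N'(x)|_3=|N|_3$ whenever $|x|_3=1$.

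For (i) I would lift the $3$-cycle. Since $0\mapsto 1\mapsto 2\mapsto 0$, the class $3\mathbb{Z}_3$ is invariant under $f_N^3$, and on it $(f_N^3)'(x)=\prod_{i=0}^{2}f_N'(f_N^i x)$; the factor coming from the point in $3\mathbb{Z}_3$ carries $x^{N-1}$, whence $|(f_N^3)'(x)|_3\le 3^{-(N-1)}<1$ on all of $3\mathbb{Z}_3$. Banach's fixed point theorem in the complete space $\mathbb{Z}_3$ then produces a unique fixed point $x_0$ of $f_N^3$, i.e. a $3$-periodic point of $f_N$; because every residue of $\mathbb{F}_3$ lies on the single $3$-cycle, each $x\in\mathbb{Z}_3$ has $f_N^3(x)$ in its own residue class and is pulled to the orbit by the same contraction, giving $\mathbb{Z}_3$ as attracting basin. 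Part (ii)(1) is the same argument one level up: when $6\mid N$ we have $|N|_3\le 3^{-1}$, so $|f_N'|_3<1$ on the units, $f_N$ contracts the invariant set $2+3\mathbb{Z}_3$, and its unique fixed point attracts it.

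The substance lies in (ii)(2)--(6), where $N$ is even with $3\nmid N$, so $|f_N'|_3=1$ on $2+3\mathbb{Z}_3$ and the dynamics is isometric --- the regime of the minimal decomposition of Fan and Liao \cite{fl11}. I would first record the induced permutation of the three balls $2,5,8+9\mathbb{Z}_3$ of $2+3\mathbb{Z}_3$, which depends only on $N\bmod 6$ and $a\bmod 9$: writing $2=2^1,\,8=2^3,\,5=2^5$ in the cyclic group $(\mathbb{Z}/9)^\times$ and using $f_N(x)\equiv(x^N+1)/a\pmod 9$, one finds a $3$-cycle when $N\equiv 2$ and $a\equiv 1,4$, all three balls fixed when $N\equiv 2$ and $a\equiv 7$, and a transposition-plus-fixed-point pattern when $N\equiv 4$ (the fixed ball being $5,2,8+9\mathbb{Z}_3$ according as $a\equiv 1,4,7\pmod 9$). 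This already isolates the correct candidate minimal set in each case: the whole $2+3\mathbb{Z}_3$ in the $3$-cycle case, one of the listed two-ball unions in the transposition case, or the three balls separately in the all-fixed case.

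It then remains to decide, for each candidate, whether minimality actually lifts through all finer levels; here I would invoke the $p=3$ minimality criterion already used in Theorem \ref{dsy1}. On a cyclically permuted union the first return map to the smallest ball must satisfy the ``irrational rotation'' conditions --- derivative $\equiv 1\pmod 3$ together with the nonvanishing and distinctness of the next coefficient modulo $3$. Evaluating $(f_N^{k})'$ and the relevant second-order data at the periodic point, expanded to the needed precision, converts these into congruences refining $N$ to residues mod $18$ and $a$ to residues mod $9$, and mod $27$ in the all-fixed case (6) where the lift must be controlled one step deeper; matching against the six patterns above yields precisely the stated equivalences. Note that the leading mod-$9$ data alone is insufficient: both $a\equiv 1$ and $a\equiv 4$ give a $3$-cycle when $N\equiv 2$, yet only $a\equiv 4$ survives the lift to make $2+3\mathbb{Z}_3$ globally minimal, which is why the refinement step is essential. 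The main obstacle is exactly this last step: the lifting computations require tracking the $3$-adic expansions of $f_N$ and its iterates several orders beyond the leading term, and the bookkeeping is delicate because the answer genuinely depends on $N\bmod 18$ and $a\bmod 27$, so no coarser truncation will do.
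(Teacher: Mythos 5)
Your overall strategy is the same as the paper's: reduce to the residue dynamics modulo $3$, $9$, $27$, identify the invariant/candidate sets, and then decide minimality via Theorem \ref{mini} together with the lifting criteria of DesJardins--Zieve and Fan--Liao. Your mod $3$ table, the contraction arguments for (i) and (ii)(1) (the paper instead phrases these as ``grows tails'' plus Theorem \ref{grta}, which is the same mechanism), and your mod $9$ permutation table (three-cycle for $N\equiv 2$, $a\equiv 1,4$; all fixed for $N\equiv 2$, $a\equiv 7$; transposition plus fixed ball for $N\equiv 4$, with fixed ball $5,2,8$ according as $a\equiv 1,4,7 \pmod 9$) are all correct and match Propositions \ref{pro4.4}--\ref{pro4.7} of the paper. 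So the candidate sets in (ii)(2)--(6) are correctly isolated.

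Two issues remain in the decisive step. First, a technical one: in the transposition cases (2), (4), (5), the level-$1$ cycle is a fixed point with derivative $\equiv 2 \pmod 3$, i.e.\ it \emph{partially splits}, so the ``irrational rotation'' criterion you quote from Theorem \ref{dsy1} (derivative $\equiv 1 \pmod 3$, next coefficient nonzero and distinct from $g''/2$) does not apply at level $1$ at all. You must either pass to the first-return map $f_N^2$ on the $2$-cycle at level $2$ and test \emph{growth} there --- where, by Proposition \ref{dt2}(1), no second-order ``distinctness'' condition is needed since growth propagates automatically for $p\ge 3$, $n\ge 2$; imposing that extra condition at level $2$ would yield strictly stronger, hence wrong, congruences --- or use the partial-splitting criterion, Proposition \ref{dt4}, via $A_2=v_3\bigl(\alpha_2-1\bigr)$, which is what the paper does (it computes $A_2=1$ iff $N\equiv 4,16 \pmod{18}$ and invokes $A_2<nd$ with $n=1$, $d=2$). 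Second, and more substantively, the equivalences themselves \emph{are} the congruence computations you defer: the separation of $a\equiv 4$ from $a\equiv 1 \pmod 9$ in case (3) comes from checking $\beta_1(2)\not\equiv f_N''(2)/2 \pmod 3$ (Proposition \ref{pro4.6}), and case (6) requires computing $\beta_2(2),\beta_2(5),\beta_2(8)$ modulo $3$, which forces the expansion of $2^N+1-2a$ modulo $27$ (Proposition \ref{pro4.7}). Your proposal correctly predicts the required precision ($N$ mod $18$, $a$ mod $27$) but does not carry these out, so as written it establishes the framework and the candidates, not the stated ``if and only if'' conditions.
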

In order to overcome obstacles that Diagana encountered in \cite{dt24}, we reduce the study of rational dynamical system to the study of polynomial dynamical system, so that we can use minimal decomposition. We also prove an estimate for binomial coefficients in Lamme \ref{lem2.4} which allow us to use the methods in \cite{flw07}. Finally, we point out that the main results of Diagana in \cite{dt24} are containing in Theorems \ref{sy2} and \ref{sy3}.\par 
The paper is organized as follows.
\section{Preliminaries}
\subsection{\textit{P-adic numbers and the projective line} $\mathbb{P}^1(\mathbb{Q}_p)$}
Let $p\ge 2$ be a prime. We can write any nonzero rational number $r\in \mathbb{Q}$ as $r=p^{v}\frac{a}{b}$ where $v,a,b\in \mathbb{Z}$ and $gcd(p,a)=gcd(p,b)=1$. Define $v_p(r)=v$ and $|r|_p=p^{-v_p(r)}$ for $r\neq 0$ and $|0|_p=0$. Then $|\cdot|_p$ is a non-Archimedean absolute value on $\mathbb{Q}$. The completion of $\mathbb{Q}$ under the absolute value $|\cdot|_p$ is the field $\mathbb{Q}_p$ \textit{of p-adic numbers}. It is known that, any $x\in \mathbb{Q}_p$ has a unique expansion as
\[x=\sum_{n=v_p(x)}^{\infty}x_np^n,\quad x_n\in \{0,1,\cdots,p-1\}\quad \text{and}\quad x_{v_p(x)}\neq 0.\]
Here, the integer $v_p(x)$ is called the \textit{p-valuation} of $x$.\par
We can describe any point in the \textit{projective line} $\mathbb{P}^1(\mathbb{Q}_p)$ by a pair $[x:y]$ of points in $\mathbb{Q}_p$ which are not both zero. Two such pairs are equal if there exists a $\lambda\in \mathbb{Q}_p^*$ such that
\[[x:y]=[\lambda x:\lambda y].\]
By identifying the field $\mathbb{Q}_p$ with the subset of $\mathbb{P}^1(\mathbb{Q}_p)$ given by $\{[x:1]\in \mathbb{P}^1(\mathbb{Q}_p)| x\in \mathbb{Q}_p\}$, we can view $\mathbb{P}^1(\mathbb{Q}_p)$ as $\mathbb{Q}_p\cup \{\infty\}$.\par
The \textit{spherical metric} defined on $\mathbb{P}^1(\mathbb{Q}_p)$ is an analogue to the standard spherical metric on the Riemann sphere. Suppose $P=[x_1:y_1] \ \text{and}\ Q=[x_2:y_2] \in \mathbb{P}^1(\mathbb{Q}_p)$. We define
\[\rho(P,Q)=\frac{|x_1y_2-x_2y_1|_p}{\max\{|x_1|_p,|y_1|_p\}\max\{|x_2|_p,|y_2|_p\}}.\]
Viewing $\mathbb{P}^1(\mathbb{Q}_p)$ as $\mathbb{Q}_p\cup \{\infty\}$, the spherical distance of $x,y\in \mathbb{Q}_p\cup \{\infty\}$ can also be described by
\[\rho(x,y)=\frac{|x-y|_p}{\max\{|x|_p,1\}\max\{|y|_p,1\}}\quad \text{if}\ x,y\in \mathbb{Q}_p,\]
and
\begin{align*}
  \rho(x,\infty)=\begin{cases}
    1, & \text{if}\ |x|_p\le 1,\\
    \frac{1}{|x|_p}, & \text{if}\ |x|_p> 1.
  \end{cases}
\end{align*}
\par
Now, we list some fundamential results about $\mathbb{Q}_p$.
\begin{lemma}\cite[Proposition 2.1]{j11}\label{lem2.2}
  Let $K$ be a field which is complete with respect to a non-trivial, non-Archimedean absolute value $|\cdot |$, and suppose $\mathcal{O}=\{x\in K: |x|\le 1\}$. Let $F\in \mathcal{O}[X]$. Suppose that at some point $x_0\in\mathcal{O}$, there exists $L\ge 1$, so that 
  \begin{align}\label{hl1}
    |F(x_0)|<\left|\frac{F^{(L)}(x_0)}{L!}F^{\prime}(x_0)\right|,
  \end{align}
  and for each $1\le k\le L-1$ $(\text{if}\ L=1,\text{we only need \eqref{hl1}})$,
  \begin{align*}
    \left|\frac{F^{(k+1)}(x_0)}{(k+1)!}F(x_0)\right|< \left|\frac{F^{(k)}(x_0)}{k!}F^{\prime}(x_0)\right|.
  \end{align*} 
  Then, there is a unique $x\in \mathcal{O}$ with $F(x)=0$ and $|x-x_0|\le \left|\frac{F(x_0)}{F^{\prime}(x_0)}\right|$.
\end{lemma}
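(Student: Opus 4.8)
The natural approach is Newton's method: define the iteration $x_{n+1}=x_n-F(x_n)/F'(x_n)$ starting from $x_0$ and show it converges to the desired zero. Everything rests on the fact that the divided derivatives $F^{(k)}/k!$ again lie in $\mathcal{O}[X]$, so that the quantities $c_k:=\left|\frac{F^{(k)}(x_0)}{k!}\right|$ satisfy $c_k\le 1$ for all $k\ge 1$; I abbreviate $c_0:=|F(x_0)|$ and $r:=c_0/c_1=|F(x_0)/F'(x_0)|$, the size of the first Newton step. A preliminary remark is that \eqref{hl1} forces $c_0<c_Lc_1\le c_1$, hence $r<1$.

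First I would recast the hypotheses in terms of the auxiliary quantities $t_k:=c_kr^k$. A direct check gives $t_0=t_1=c_0$; the second family of inequalities is equivalent to the strict decrease $t_1>t_2>\cdots>t_L$; and \eqref{hl1} is equivalent to $r<c_L$, which yields $t_L>r^{L+1}$. Expanding $F$ by its divided Taylor formula at $x_0$ and using that the constant and linear terms cancel by the choice of Newton step, one gets $F(x_1)=\sum_{k\ge 2}\frac{F^{(k)}(x_0)}{k!}h^k$ with $h=-F(x_0)/F'(x_0)$, so $|F(x_1)|\le\max_{k\ge 2}t_k$. The reformulated hypotheses give $t_k<t_1=c_0$ for $2\le k\le L$ (monotonicity), and $t_k\le r^{L+1}<t_L<c_0$ for $k>L$ (the tail, controlled by \eqref{hl1} together with $c_k\le 1$). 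Hence $|F(x_1)|<|F(x_0)|$. Expanding $F'$ in the same way and noting that every higher-order term is bounded by some $t_k/r<c_0/r=c_1$, the ultrametric inequality forces $|F'(x_1)|=|F'(x_0)|=c_1$.

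Next I would prove that the hypotheses propagate with the \emph{same} $L$. Writing the divided-derivative Taylor expansion $\frac{F^{(k)}(x_1)}{k!}=\sum_{i\ge 0}\binom{k+i}{k}\frac{F^{(k+i)}(x_0)}{(k+i)!}h^i$ and using $\left|\binom{k+i}{k}\right|\le 1$ together with the monotonicity of $(t_k)$, the $i=0$ term strictly dominates for $1\le k\le L$, so $\left|\frac{F^{(k)}(x_1)}{k!}\right|=c_k$ is unchanged for these $k$. Combined with $|F(x_1)|<|F(x_0)|$, both families of hypotheses hold verbatim at $x_1$. By induction the iteration is well defined, $|F'(x_n)|\equiv c_1$, and the step sizes obey $r_{n+1}\le\lambda r_n$ with $\lambda=c_2c_0/c_1^2<1$ coming from the case $k=1$ of the second hypothesis (for $L=1$ this is the classical estimate $r_{n+1}\le(c_0/c_1^2)r_n$). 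Thus $r_n\to 0$ geometrically, $(x_n)$ is Cauchy, and its limit $x\in\mathcal{O}$ satisfies $F(x)=0$ with $|x-x_0|\le\max_n r_n=r=|F(x_0)/F'(x_0)|$ by the ultrametric inequality.

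For uniqueness, suppose $y\in\mathcal{O}$ is another zero with $|y-x_0|\le r$. Factoring
\[0=F(y)-F(x)=(y-x)\Big(F'(x)+\sum_{k\ge 2}\tfrac{F^{(k)}(x)}{k!}(y-x)^{k-1}\Big),\]
and using $|y-x|\le r$ together with the preserved values $\left|\frac{F^{(k)}(x)}{k!}\right|=c_k$ for $k\le L$, the same head/tail estimate shows that the bracketed factor has absolute value exactly $c_1\ne 0$, whence $y=x$. The step I expect to be the main obstacle is the propagation of the hypotheses, i.e.\ verifying that all $L$ inequalities survive a Newton step with the same $L$; this is precisely what makes the monotone auxiliary sequence $(t_k)$ and the bound $\left|\binom{k+i}{k}\right|\le 1$ indispensable. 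Once propagation is in place, convergence and uniqueness reduce to routine ultrametric bookkeeping.
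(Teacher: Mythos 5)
The paper does not prove this lemma at all: it is imported verbatim from Wright \cite[Proposition 2.1]{j11} and used as a black box, so there is no internal argument to compare yours against. Judged on its own, your Newton-iteration proof is correct, and it would make the paper self-contained at this point. The reformulation via $t_k=c_kr^k$ captures the hypotheses exactly when $F(x_0)\neq 0$ (if $F(x_0)=0$ the statement is trivial with $x=x_0$, a degenerate case worth one sentence, since then the claimed ``equivalence'' with strict decrease of $(t_k)$ is vacuous): the second family is $t_{k+1}<t_k$ for $1\le k\le L-1$, and \eqref{hl1} is $r<c_L$. The three essential verifications all check out. First, $|F(x_1)|\le\max_{k\ge2}t_k<c_0$, by monotonicity for $2\le k\le L$ and by $t_k\le r^{L+1}<c_Lr^L=t_L\le t_1$ for $k>L$. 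Second, in the expansion $\frac{F^{(k)}(x_1)}{k!}=\sum_{i\ge0}\binom{k+i}{k}\frac{F^{(k+i)}(x_0)}{(k+i)!}h^i$ the $i=0$ term dominates strictly for $1\le k\le L$: the head terms are $<c_k$ by $t_{k+i}<t_k$, and the tail terms are $\le r^{L-k+1}<c_Lr^{L-k}\le c_k$, using \eqref{hl1} and $t_L\le t_k$; hence $\bigl|\frac{F^{(k)}(x_1)}{k!}\bigr|=c_k$, both families of hypotheses propagate with the same $L$, and the contraction ratio $\lambda=c_2c_0/c_1^2<1$ (or $c_0/c_1^2$ when $L=1$) is uniform along the iteration, since $c_0$ only decreases. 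Third, uniqueness works because this same domination argument applies verbatim to \emph{any} point within distance $r$ of $x_0$ (it only used $|h|\le r$), in particular to the limit $x$, so the bracketed factor in your factorization has absolute value exactly $c_1\neq 0$. Two cosmetic slips, neither of which damages the argument: the chain ``$t_k\le r^{L+1}<t_L<c_0$'' should end with $t_L\le c_0$, since $t_L=t_1=c_0$ when $L=1$; and the claim that $|F'(x_1)|=c_1$ silently needs $c_k>0$ for $1\le k\le L$, which does follow from the hypotheses ($c_1,c_L>0$ from \eqref{hl1}, and $c_kc_1>c_{k+1}c_0\ge0$ for $k\le L-1$), so you may want to record it explicitly.
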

\begin{lemma}\cite[Proposition 3.1]{dt24}\label{lem2.3}
  Suppose $N=qp^m$ with $gcd(p,q)=1$ and $m\in \mathbb{N}$. Consider the equation
  \begin{align}\label{eq1}
    x^N+1\equiv 0\quad ({\rm mod}\ p).
  \end{align}
  If $p=2$, then the equation \eqref{eq1} has only one solution.
  If $p\ge 3$, then the equation \eqref{eq1} has solutions if and only if $gcd(p-1,q)\mid \frac{p-1}{2}$, and the number of solutions is $gcd(p-1,q)$.
\end{lemma}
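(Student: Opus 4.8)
The plan is to reduce the congruence to a linear congruence in the cyclic group $\mathbb{F}_p^*$ and then apply the elementary solvability criterion for linear congruences. First I would dispose of the case $p=2$ by direct inspection: over $\mathbb{F}_2$ the equation $x^N+1\equiv 0$ reads $x^N\equiv 1$, and testing the two residues $x=0,1$ shows that $x=1$ is the unique solution, independently of $N$.

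For $p\ge 3$ I would first note that $x=0$ is never a solution of \eqref{eq1} (since $0^N+1=1$), so every solution lies in $\mathbb{F}_p^*$. The key simplification is to replace the exponent $N=qp^m$ by $q$: Fermat's little theorem gives $x^p\equiv x\pmod p$ for all $x$, hence $x^{p^m}\equiv x$ by iteration, and therefore
\[x^N=\left(x^{p^m}\right)^{q}\equiv x^{q}\pmod p,\]
so that \eqref{eq1} is equivalent to $x^q\equiv -1\pmod p$ and the factor $p^m$ is irrelevant modulo $p$. Fixing a generator $g$ of $\mathbb{F}_p^*$ and writing $x=g^t$ with $t\in\mathbb{Z}/(p-1)\mathbb{Z}$, and using that $g^{(p-1)/2}$ is the unique element of order $2$ so that $-1=g^{(p-1)/2}$, the equation becomes the linear congruence
\[qt\equiv \frac{p-1}{2}\pmod{p-1}.\]
Since $t\mapsto g^t$ is a bijection onto $\mathbb{F}_p^*$, the number of solutions $x$ equals the number of residues $t$ solving this congruence.

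Finally I would invoke the standard theory of linear congruences: $qt\equiv \frac{p-1}{2}\pmod{p-1}$ is solvable if and only if $\gcd(q,p-1)\mid \frac{p-1}{2}$, in which case it has exactly $\gcd(q,p-1)=\gcd(p-1,q)$ solutions modulo $p-1$; this delivers both the solvability criterion and the count claimed in the lemma. The argument is essentially routine, and I expect no serious obstacle beyond bookkeeping: the only points requiring genuine care are the reduction of the exponent from $N$ to $q$ and the identification $-1=g^{(p-1)/2}$, the latter relying on $p\ge 3$ so that $(p-1)/2$ is an integer.
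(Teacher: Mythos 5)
Your proof is correct. The paper does not actually prove this lemma itself (it quotes it from \cite[Proposition 3.1]{dt24}), but your argument---reducing the exponent from $N$ to $q$ via Fermat's little theorem, writing $x=g^t$ for a generator $g$ of $\mathbb{F}_p^*$ with $-1=g^{(p-1)/2}$, and counting solutions of the linear congruence $qt\equiv\frac{p-1}{2}\ ({\rm mod}\ p-1)$---is essentially the same as the cited proof, as one can confirm from the way the paper invokes its internal structure in the proof of Proposition \ref{pro3.10}, where solutions are written as $j_0=g^{y}$ with $qy\equiv \frac{p-1}{2}\ ({\rm mod}\ p-1)$.
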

The following estimates for binomial coefficients $C_{n}^k$ will be useful in our calculations. For any $n=\sum_{i=0}^{m}a_ip^i$ with $0\le a_i\le p-1$, define 
\[wt_{p}(n)=\sum_{i=0}^{m}a_i.\] 
\begin{lemma}\cite[Corollary 3.7]{ak09}\label{vobc}
  For all $N\ge K\ge 0$, we have
  \begin{align}
    v_p(C_N^K)=\frac{1}{p-1}(wt_{p}(K)+wt_{p}(N-K)-wt_{p}(N)).
  \end{align}
\end{lemma}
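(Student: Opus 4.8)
The plan is to deduce the formula from Legendre's classical expression for the $p$-adic valuation of a factorial, combined with the elementary identity $C_N^K=\frac{N!}{K!(N-K)!}$. Taking $v_p$ of both sides turns the binomial coefficient into
\begin{align*}
  v_p(C_N^K)=v_p(N!)-v_p(K!)-v_p((N-K)!),
\end{align*}
so the whole problem reduces to a clean statement about $v_p(n!)$ for a single integer $n$, which I would then evaluate in terms of the digit sum $wt_p(n)$.

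First I would establish Legendre's formula in the form
\begin{align*}
  v_p(n!)=\sum_{i\ge 1}\left\lfloor \frac{n}{p^i}\right\rfloor=\frac{n-wt_p(n)}{p-1}.
\end{align*}
The first equality is a counting argument: writing $v_p(n!)=\sum_{j=1}^{n}v_p(j)$ and grouping by the power of $p$, the number of $j\le n$ that are divisible by $p^i$ is exactly $\lfloor n/p^i\rfloor$, and summing over $i\ge 1$ recovers each $v_p(j)$ with the correct multiplicity. For the second equality I would expand $n=\sum_{i=0}^{m}a_i p^i$, use $\lfloor n/p^i\rfloor=\sum_{j\ge i}a_j p^{j-i}$, interchange the order of summation, and evaluate the finite geometric sum $\sum_{i=1}^{j}p^{j-i}=\frac{p^j-1}{p-1}$; this collapses to $\frac{n-wt_p(n)}{p-1}$. (Alternatively one could invoke Kummer's theorem, identifying $v_p(C_N^K)$ with the number of carries in adding $K$ and $N-K$ in base $p$, but the Legendre route is the most self-contained.)

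Finally I would substitute this expression with $n=N$, $n=K$, and $n=N-K$ into the three-term difference above. The linear parts contribute $\frac{N-K-(N-K)}{p-1}=0$, so exactly the digit-sum terms survive, yielding
\begin{align*}
  v_p(C_N^K)=\frac{1}{p-1}\bigl(wt_p(K)+wt_p(N-K)-wt_p(N)\bigr),
\end{align*}
as asserted. I do not expect any genuine obstacle: the only steps requiring minor care are the geometric-series manipulation in the digit-sum reformulation and the observation that the linear terms cancel precisely because $N=K+(N-K)$. This is consistent with the statement being quoted from the literature, the argument being recorded only for completeness.
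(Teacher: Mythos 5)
Your derivation is correct: combining $v_p(C_N^K)=v_p(N!)-v_p(K!)-v_p((N-K)!)$ with Legendre's formula $v_p(n!)=\frac{n-wt_p(n)}{p-1}$, and noting that the linear terms cancel because $N=K+(N-K)$, gives exactly the stated identity. Note that the paper offers no proof of its own here --- the lemma is quoted verbatim from \cite[Corollary 3.7]{ak09} --- so there is nothing to diverge from; your argument is the standard self-contained route to this fact (equivalent to Kummer's carry-counting theorem) and every step, including the geometric-series collapse $\sum_{i=1}^{j}p^{j-i}=\frac{p^j-1}{p-1}$, is sound.
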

\begin{lemma}\label{lem2.4}
  Let $N\ge 2$ be an integer. Then, for any non-negative integer $K\le N-2$, we have 
  \begin{align}\label{zuhe}
    |C_N^K|_p\cdot p^{1-(N-K)}\le |N|_p.
  \end{align}
  The equality holds if and only if $p=2$ and $K=N-2$.
\end{lemma}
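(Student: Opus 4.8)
The plan is to pass to $p$-adic valuations and reduce the claimed inequality to a clean statement about the valuation of a single binomial coefficient. Writing $j:=N-K$, the hypothesis $0\le K\le N-2$ becomes $2\le j\le N$, and, since $p>1$, the inequality \eqref{zuhe} is equivalent (after applying $-\log_p$ to both sides) to
\[
v_p(C_N^K)\ \ge\ v_p(N)-(N-K)+1\ =\ v_p(N)-j+1.
\]
So everything reduces to a lower bound for $v_p(C_N^K)$ in terms of $v_p(N)$ and $j$.

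First I would use the elementary factorization $C_N^K=C_N^{N-K}=\tfrac{N}{j}\,C_{N-1}^{j-1}$, which gives
\[
v_p(C_N^K)=v_p(N)-v_p(j)+v_p(C_{N-1}^{j-1}).
\]
Substituting this into the target inequality, the terms $v_p(N)$ cancel and the claim becomes
\[
v_p(C_{N-1}^{j-1})\ \ge\ v_p(j)-j+1.
\]
Since the left-hand side is the valuation of an integer, hence nonnegative, it suffices to prove $v_p(j)\le j-1$ for every $j\ge 2$. This is the heart of the matter and the step I expect to carry the proof: one has the chain $v_p(j)\le\log_p j\le j-1$, where the first bound is the trivial estimate on the $p$-adic valuation and the second follows from $p^{\,j-1}\ge 2^{\,j-1}\ge j$. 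This establishes \eqref{zuhe} for all admissible $K$.

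It remains to analyse equality, where the only genuine difficulty is bookkeeping, since one must force both inequalities above to be equalities at once. Tracing them, equality in \eqref{zuhe} requires simultaneously $v_p(C_{N-1}^{j-1})=0$ and $v_p(j)=j-1$. The identity $v_p(j)=j-1$ together with $v_p(j)\le\log_p j$ gives $j\ge p^{\,j-1}$; comparing with $p^{\,j-1}\ge 2^{\,j-1}>j$ for $j\ge 3$ rules out $j\ge 3$, and among the remaining options only $p=2,\ j=2$ survives (indeed $v_2(2)=1=j-1$). Hence equality can occur only for $p=2$ and $K=N-2$; in that regime the residual factor is $v_p(C_{N-1}^{j-1})=v_2(N-1)$, which vanishes precisely when $N$ is even, so that is exactly when the bound is sharp. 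As a sanity check I would recompute $v_p(C_N^K)=\tfrac{1}{p-1}\bigl(wt_p(K)+wt_p(N-K)-wt_p(N)\bigr)$ via Lemma \ref{vobc} in a few small cases to confirm the constants and the equality locus.
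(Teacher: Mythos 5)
Your proof is correct, and it takes a genuinely different and markedly shorter route than the paper. The paper's own proof invokes the digit-sum formula $v_p(C_N^K)=\frac{1}{p-1}\left(wt_p(K)+wt_p(N-K)-wt_p(N)\right)$ of Lemma \ref{vobc} and then runs a long case analysis on the base-$p$ digits of $K$ and $N-K$, tracking carries in the subtraction $N-K$ to bound $wt_p(N-K)$ from below; your argument replaces all of this by the absorption identity $C_N^{N-K}=\frac{N}{j}C_{N-1}^{j-1}$ with $j=N-K$, which cancels $v_p(N)$ from both sides and reduces \eqref{zuhe} to the trivial estimate $v_p(j)\le \log_p j\le j-1$. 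Beyond brevity, your route buys a sharper equality analysis, and here you have in fact caught an inaccuracy in the lemma as stated: equality forces both $v_p(j)=j-1$ (hence $p=2$, $j=2$) and $v_p(C_{N-1}^{j-1})=v_2(N-1)=0$, i.e.\ $N$ even. So equality holds if and only if $p=2$, $K=N-2$ \emph{and} $N$ is even; the paper's criterion ``$p=2$ and $K=N-2$'' is incomplete, as the example $N=3$, $K=1$, $p=2$ shows: there $|C_3^1|_2\cdot 2^{-1}=\tfrac12<1=|3|_2$, a strict inequality. This slip is harmless for the paper's applications (Corollary \ref{cor2.5} and Proposition \ref{pro3.10} use only the inequality with $p\ge 3$, and the $p=2$ applications in Propositions \ref{pro3.11}, \ref{pro3.12} and Theorem \ref{sy3} also need only the inequality, with strictness coming from the smaller radius $\tfrac14$), but your version of the equality statement is the correct one.
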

\begin{proof}
  Since $N\ge 2$ and $K\le N-2$, we can write $N=p^m(N_0+N_1p+\cdots +N_sp^s)$ and $K=K_0+K_1p+\cdots +K_{m+s}p^{m+s}$ for some $m,\ s\in\mathbb{N}$, where $N_0,\ N_s\neq 0$ and $0\le N_i,\ K_j\le p-1$ for all $0\le i\le s$ and $0\le j\le m+s$.\par
  If $K_{m+s}=N_s$, then by Lemma \ref{vobc}, we have that $v_p(C_N^K)$ is independent of $N_s$. If $m=0$, then $|N|_p=1$, which implies \eqref{zuhe}. Hence, without loss of generality, we may assume $K_{m+s}\le N_s-1$ and $m\ge 1$. Then, we have 
  \begin{align*}
    N-K =(p-K_0)+(p-1-K_1)p+\cdots +(p-1-K_{m-1})p^{m-1} +\\(N_0-1-K_m)p^m+(N_1-K_{m+1})p^{m+1}+\cdots+(N_s-K_{m+s})p^{m+s}:= \sum_{i=0}^{m+s}c_ip^i.
  \end{align*}
  We distinguish four cases.
  \begin{enumerate}[(i)]
    \item If $K_0>0$, then we have
    $1\le p-K_0\le p-1$.
    We distinguish two cases.
    \begin{enumerate}[1)]
      \item If $c_i\ge 0$ for all $0\le i\le m+s$, then we have 
      \[wt_p(N-K)=\sum_{i=0}^{m+s}c_i=m(p-1)+\sum_{i=0}^sN_i-\sum_{j=0}^{m+s}K_j=m(p-1)+wt_p(N)-wt_p(K).\]
      \item If there exists $i$ such that $c_i<0$, then we set 
      \[t=\min\{i: c_i<0\}\ \text{and}\ \ell=\#\{t\le i: c_i\le 0 \}.\]
      We claim that
      \begin{align}\label{wtpe}
        wt_p(N-K)=\sum_{i=0}^{m+s}c_i+\ell (p-1).
      \end{align}
      We prove \eqref{wtpe} by induction on $\ell$. Suppose $\ell=1$. Since $c_{t+1}\ge 1$, we have
      \[N-K=\sum_{i=0}^{t-1}c_ip^i+(p+c_t)p^t+(c_{t+1}-1)p^{t+1}+\sum_{i=t+2}^{m+s}c_ip^i.\]
      Hence,
      \[wt_p(N-K)=\sum_{i=0}^{m+s}c_i+p-1.\]
      Suppose \eqref{wtpe} holds for $\ell\le n$. We then prove \eqref{wtpe} holds for $\ell= n+1$. Suppose $c_{t_j}\le 0$ for all $1\le j\le n+1$ and $t_j$ strictly increasing. If $t_1+1\neq t_2$, then we have
      \[N-K=\sum_{i=0}^{t_1-1}c_ip^i+(p+c_{t_1})p^t+(c_{t_1+1}-1)p^{t_1+1}+\sum_{i=t_1+2}^{m+s}c_ip^i.\]
      Let $d_i=c_i$ for all $i\not\in \{t_1, t_1+1\}$. Let $d_{t_1}=p+c_{t_1}$ and $d_{t_1+1}=c_{t_1+1}-1$. Then we have 
      \[N-K=\sum_{i=0}^{m+s}d_ip^i.\]
      Let $t^{\prime}=\min\{i: d_i<0\}\ \text{and}\ \ell^{\prime}=\#\{t\le i: d_i\le 0 \}$. So by induction
      we have
      \[wt_p(N-K)=\sum_{i=0}^{m+s}d_i+n(p-1)=\sum_{i=0}^{m+s}c_i+(n+1)(p-1).\]
      If $t_1+1=t_2$ and $t_2+1\neq t_3$, then we have
      \[N-K=\sum_{i=0}^{t_1-1}c_ip^i+(p+c_{t_1})p^t+(c_{t_2}-1+p)p^{t_2}+(c_{t_2+1}-1)p^{t_2+1}+\sum_{i=t_2+2}^{m+s}c_ip^i.\]
      By the same arguments above, we have
      \[wt_p(N-K)=\sum_{i=0}^{m+s}c_i+(n+1)(p-1).\]
      Repeating the process, we complete the proof of \eqref{wtpe}.
    \end{enumerate}
    Hence, we have 
    \[wt_p(N-K)+wt_p(K)-wt_p(N)\ge m(p-1).\]
    Hence, by Lemma \ref{vobc}, we have 
    \[|C_N^K|_p\cdot p^{1-(N-K)}\le p^{-m+1-(N-K)}<p^{-m}.\]
    \item If $K_0=0$ and $m=1$, then we have
    \[K=K_1p+\cdots+K_{s+1}p^{s+1}\le (p-1)(p+p^2+\cdots+p^s)+K_{s+1}p^{s+1}=p^{s+1}-p+K_{s+1}p^{s+1}.\]
    So,
    \[N-K\ge p(N_0+\cdots+N_sp^s)-p^{s+1}+p-K_{s+1}p^{s+1}\ge (N_s-K_{s+1}-1)p^{s+1}+p\ge p.\]
    Hence,
    \[|C_N^K|_p\cdot p^{1-(N-K)}\le p^{1-(N-K)}\le p^{1-p}\le p^{-1}.\]
    \item If $K_0=0,\ m\ge 2$ and $K_1\neq 0$, then by the same arguments in (i), we have 
    \[wt_p(N-K)+wt_p(K)-wt_p(N)\ge (m-1)(p-1).\]
    Hence, by Lemma \ref{vobc}, we have 
    \[|C_N^K|_p\cdot p^{1-(N-K)}\le p^{-m+2-(N-K)}\le p^{-m+2-p}\le p^{-m}.\]
    \item If $K_0=K_1=0$ and $m=2$, then we have
    \[N-K\ge (N_{s+2}-K_{s+2}-1)p^{2+s}+p^2=p^2.\]
    Hence,
    \[|C_N^K|_p\cdot p^{1-(N-K)}\le p^{1-(N-K)}\le p^{1-p^2}< p^{-2}.\]
  \end{enumerate}
  Repeating the process, we complete the proof of \eqref{zuhe}. By examining the proof above, we find that the equality holds if and only if $p=2$ and $K=N-2$.
\end{proof}
\begin{corollary}\label{cor2.5}
  Let $p\ge 3$ be a prime. Let $\omega\in\mathbb{Q}_p$. 
  If $r< |\omega|_p$, then for all $x,y\in D(\omega,r)$, we have 
  \[|f_N(x)-f_N(y)|_p=\frac{|N|_p}{|a|_p}|\omega|_p^{N-1}|x-y|_p.\] 
\end{corollary}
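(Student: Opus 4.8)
The plan is to factor $f_N(x)-f_N(y)=(x^N-y^N)/a=(x-y)\Sigma/a$, where $\Sigma:=\sum_{k=0}^{N-1}x^ky^{N-1-k}$, and then to compute $|\Sigma|_p$ exactly. All the content sits in proving
\[|\Sigma|_p=|N|_p\,|\omega|_p^{N-1};\]
granting this, multiplicativity of $|\cdot|_p$ gives the stated identity at once, the degenerate case $x=y$ being trivial since both sides vanish.

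Before estimating, I would extract two consequences of $r<|\omega|_p$. Since $|y-\omega|_p\le r<|\omega|_p$, the ultrametric inequality forces $|y|_p=|\omega|_p$ (and likewise $|x|_p=|\omega|_p$). Furthermore, because every nonzero absolute value on $\mathbb{Q}_p$ is an integral power of $p$, the bound $|x-y|_p\le r<|\omega|_p$ in fact upgrades to $|x-y|_p\le|\omega|_p/p$; this extra factor of $p$ is exactly what will make the higher-order terms decay.

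The algebraic heart is to re-expand $\Sigma$ in powers of $w:=x-y$. Writing $x=y+w$, applying the binomial theorem to each $x^k$, and collapsing the inner sums via the hockey-stick identity $\sum_{k=l}^{N-1}C_k^l=C_N^{l+1}$, I obtain
\[\Sigma=\sum_{j=1}^{N}C_N^j\,y^{N-j}w^{\,j-1}.\]
The leading ($j=1$) term is $Ny^{N-1}$, of absolute value $|N|_p|\omega|_p^{N-1}$. For $j\ge 2$, using $|y|_p=|\omega|_p$ and $|w|_p\le|\omega|_p/p$ yields $\bigl|C_N^j\,y^{N-j}w^{\,j-1}\bigr|_p\le |C_N^j|_p\,|\omega|_p^{N-1}\,p^{-(j-1)}$. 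The crux is to show each of these is \emph{strictly} below $|N|_p|\omega|_p^{N-1}$, i.e. $|C_N^j|_p\,p^{-(j-1)}<|N|_p$ for all $2\le j\le N$. This is precisely Lemma \ref{lem2.4} applied with $K=N-j$: since $C_N^j=C_N^{N-j}$ and $1-(N-K)=1-j$, the lemma gives $|C_N^j|_p\,p^{\,1-j}\le|N|_p$, with equality \emph{only} when $p=2$ and $j=2$. As we assume $p\ge 3$, the equality case is ruled out and the inequality is strict for every $j\ge 2$. Thus the leading term strictly dominates, and the strong triangle inequality delivers $|\Sigma|_p=|N|_p|\omega|_p^{N-1}$.

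I expect no serious obstacle beyond applying Lemma \ref{lem2.4} with the right index substitution; the lemma is tailored to this estimate, and the hypothesis $p\ge 3$ is there exactly to discard its unique equality case and so guarantee strict domination. The only other delicate point is the discreteness argument sharpening $|x-y|_p<|\omega|_p$ to $|x-y|_p\le|\omega|_p/p$, without which the $j=2$ term would only be bounded by, rather than strictly below, the leading term.
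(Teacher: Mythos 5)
Your proof is correct and is essentially the paper's own argument: factor $f_N(x)-f_N(y)=(x-y)\Sigma/a$, expand $\Sigma$ binomially, and apply Lemma \ref{lem2.4} (whose equality case $p=2$ is excluded by the hypothesis $p\ge 3$) together with the strong triangle inequality so that the term of the form $N(\cdot)^{N-1}$ strictly dominates all others. The only difference is cosmetic: you expand around $y$, obtaining the single sum $\sum_{j=1}^{N}C_N^j y^{N-j}w^{j-1}$ with dominant term $Ny^{N-1}$ (which needs the extra remark $|y|_p=|\omega|_p$), whereas the paper expands both variables around $\omega$ and gets a double sum whose $K=N-1$ term is $N\omega^{N-1}$; both arguments rest on the same index substitution in Lemma \ref{lem2.4} and on the same discreteness upgrade from $r<|\omega|_p$ to an effective radius $|\omega|_p/p$.
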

\begin{proof}
  Since 
  \[|f_N(x)-f_N(y)|_p=\frac{1}{|a|_p}\left|\sum_{i=0}^{N-1}x^iy^{N-1-i}\right|_p|x-y|_p,\]
  we need only prove 
  \[\left|\sum_{i=0}^{N-1}x^iy^{N-1-i}\right|_p=|N\omega^{N-1}|_p.\]
  Noting that $x=\omega+x_0$ and $y=\omega+y_0$ for some $x_0,y_0\in D(0,r)$, we have 
  \[\left|\sum_{i=0}^{N-1}x^iy^{N-1-i}\right|_p=\left|\sum_{K=0}^{N-1}C_N^K\sum_{i+j=N-K-1}x_0^iy_0^j\omega^K\right|_p.\]
  Since 
  \[\left|C_N^K\sum_{i+j=N-K-1}x_0^iy_0^j\omega^K\right|_p
  \le |C_N^K|_pr^{N-K-1}|\omega|_p^K\le |C_N^K|_pp^{1-(N-K)}|\omega|_p^{N-1},\]
  by Lemma \ref{lem2.4}, we have 
  \[\left|C_N^K\sum_{i+j=N-K-1}x_0^iy_0^j\omega^K\right|_p
  < |N\omega^{N-1}|_p,\quad \forall\ 0\le K\le N-2.\]
  Since 
  \[\left|C_N^{K}\sum_{i+j=N-K-1}x_0^iy_0^j\omega^K\right|_p=|N\omega^{N-1}|_p,\quad K=N-1,\]
  by non-Archimedean triangle inequality we have 
  \[\left|\sum_{K=0}^{N-1}C_N^K\sum_{i+j=N-K-1}x_0^iy_0^j\omega^K\right|_p=|N\omega^{N-1}|_p.\] 
\end{proof}
\subsection{\textit{p-adic dynamical systems}}
First, we recall some dynamical terminology.\par
Let $X$ be a set, and $\phi: X\rightarrow X$ be a map. The couple $(X,\phi)$ is a dynamical system. A point $x_0\in X$ is called a \textit{$k$-periodic point} of $\phi$ if for some $k\ge 1$, $\phi^k(x_0)=x_0$ and $\phi^n(x_0)\neq x_0,\ \forall 1\le n\le k-1$. Especially, if $k=1$, we call $x_0$ a \textit{fixed point} of $\phi$.\par
Let $m\ge 1$, and $\mathcal{A}=\{0,1,\cdots,m-1\}$. We call the infinite product space $\Sigma_m=\mathcal{A}^{\mathbb{N}}$ the symbolic space of $m$ symbols. The left shift $\sigma$ on $\Sigma_m$ is defined as 
\[\sigma((x_0x_1x_2\cdots))=(x_1x_2x_3\cdots).\]
The couple $(\Sigma_m,\sigma)$ is called a \textit{full shift} on $m$ symbols. Let $A$ be an $m\times m$ matrix with entries in $\{0,1\}$ and let $\Sigma_A=\{x\in\Sigma_m: A_{x_ix_{i+1}}=1,\forall i\ge 0\}$. Then $(\Sigma_A,\sigma)$ forms a dynamical system called a \textit{subshift of finite type}.\par
Fan and Liao \cite{flw07} showed that a large family of $p$-adic dynamical systems are conjugate to full shift or subshift of finite type. Let $f:X\rightarrow \mathbb{Q}_p$ be a map with $X\subset f(X)$, where $X$ is a compact open set of $\mathbb{Q}_p$. For such $f$, we define 
\[\mathcal{K}_{f}:=\bigcap_{n=0}^{\infty}f^{-n}(X).\]
Suppose $X=\bigsqcup_{i\in I}D(c_i,p^{-\tau})$ where $I$ is a finite set. If there exists an integer $\tau_i\in \mathbb{Z}$ such that
\begin{align}\label{sfb}
  \forall x,y\in D(c_i,p^{-\tau}),\quad |f(x)-f(y)|_p=p^{\tau_i}|x-y|_p
\end{align}
and all $\tau_i$ in \eqref{sfb} are non-negative, but at least one is positive, then
we call $(\mathcal{K}_{f},f)$ a \textit{p-adic weak repeller}.\par
For any $i\in I$, let 
\[I_i:=\left\{j\in I:D(c_j,p^{-\tau})\subset f\left(D(c_i,p^{-\tau})\right)\right\}\]
and define the incidence matrix $A=(A_{i,j})_{I\times I}$ of $\mathcal{K}_{f}$, by $A_{i,j}=1$ if $j\in I_i$ and $A_{i,j}=0$ otherwise. The weak repeller $(K_f,f)$ is called transitive if $A$ is irreducible. The following Theorem \ref{thm2.6} shows that the chaotic dynamical behavior of a $p$-adic weak repeller is determined by $A$.
\begin{theorem}\cite[Theorem 1.1]{flw07}\label{thm2.6}
  Let $(\mathcal{K}_{f},f)$ be a transitive p-adic weak repeller in $\mathbb{Q}_p$ with incidence matrix $A$. Then $(\mathcal{K}_{f},f)$ is topologically conjugate to $(\Sigma_A,\sigma)$.
\end{theorem}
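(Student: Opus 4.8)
The statement is a coding theorem, so the plan is to build the conjugacy explicitly as an itinerary map. For $x\in\mathcal{K}_{f}$ every iterate $f^{n}(x)$ lies in $X=\bigsqcup_{i\in I}D(c_{i},p^{-\tau})$, hence in exactly one of the disks, and I would set $\pi(x)=(i_{0}i_{1}i_{2}\cdots)$ where $i_{n}$ is the unique index with $f^{n}(x)\in D(c_{i_{n}},p^{-\tau})$. The first thing to record is the local geometry: by \eqref{sfb}, the restriction of $f$ to $D(c_{i},p^{-\tau})$ is a scaling of ratio $p^{\tau_{i}}$, so it is a bijection onto the disk $f(D(c_{i},p^{-\tau}))$ of radius $p^{\tau_{i}-\tau}\ge p^{-\tau}$ (using $\tau_{i}\ge 0$). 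In the ultrametric setting any two disks are nested or disjoint; since $f^{n+1}(x)=f(f^{n}(x))$ lies in both $D(c_{i_{n+1}},p^{-\tau})$ and the larger disk $f(D(c_{i_{n}},p^{-\tau}))$, the former is contained in the latter, which by the definition of the incidence matrix says exactly $A_{i_{n},i_{n+1}}=1$. Thus $\pi$ maps $\mathcal{K}_{f}$ into $\Sigma_{A}$, and the identity $f^{n}(f(x))=f^{n+1}(x)$ gives at once the intertwining relation $\pi\circ f=\sigma\circ\pi$.

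It then remains to show $\pi$ is a homeomorphism. Continuity is immediate, since the preimage of a cylinder $[i_{0}\cdots i_{n}]$ is $\bigcap_{k=0}^{n}f^{-k}(D(c_{i_{k}},p^{-\tau}))\cap\mathcal{K}_{f}$, an open subset of $\mathcal{K}_{f}$. For surjectivity I would fix an admissible sequence $(i_{n})\in\Sigma_{A}$ and pull the disks back one step at a time: because $A_{i_{n},i_{n+1}}=1$ means $D(c_{i_{n+1}},p^{-\tau})\subset f(D(c_{i_{n}},p^{-\tau}))$ and $f$ is a scaling bijection on each disk, the sets $W_{n}:=\bigcap_{k=0}^{n}f^{-k}(D(c_{i_{k}},p^{-\tau}))$ form a nested sequence of nonempty disks of radius $p^{-\tau-(\tau_{i_{0}}+\cdots+\tau_{i_{n-1}})}$. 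These are compact, so $\bigcap_{n}W_{n}\neq\emptyset$, and any point in it has itinerary $(i_{n})$ and lies in $\mathcal{K}_{f}$. Finally, $\mathcal{K}_{f}$ is compact (a closed subset of the compact set $X$) and $\Sigma_{A}$ is Hausdorff, so once $\pi$ is a continuous bijection it is automatically a homeomorphism, and the intertwining relation upgrades this to a topological conjugacy.

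The real work, and the only place I expect genuine difficulty, is injectivity, because the repeller is merely \emph{weak}: some $\tau_{i}$ may vanish, so $f$ need not be uniformly expanding. If $\pi(x)=\pi(x')=(i_{n})$, iterating \eqref{sfb} gives $|f^{n}(x)-f^{n}(x')|_{p}=p^{S_{n}}|x-x'|_{p}$ with $S_{n}=\sum_{k=0}^{n-1}\tau_{i_{k}}$; since $f^{n}(x)$ and $f^{n}(x')$ share the disk $D(c_{i_{n}},p^{-\tau})$, the sum $S_{n}$ must stay bounded, so the orbit visits expanding states ($\tau_{i}>0$) only finitely often and is eventually confined to the zero-states. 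This is where transitivity must be used: when $\tau_{i}=0$ the image $f(D(c_{i},p^{-\tau}))$ is itself a disk of radius $p^{-\tau}$, so state $i$ has out-degree at most one, and irreducibility of $A$ forces it to be exactly one. A tail trapped among zero-states is therefore a deterministic path in a finite alphabet, hence eventually periodic on a cycle $C$ of zero-states from which no edge escapes; irreducibility then forces $C=I$, making every $\tau_{i}=0$ and contradicting the hypothesis that some $\tau_{i}$ is positive. Hence $S_{n}\to\infty$, which forces $x=x'$ and, incidentally, shows the disks $W_{n}$ above shrink to points, tying the two halves of the argument together. I would isolate this no-trapped-isometry claim as the key lemma; everything else is the standard symbolic-coding bookkeeping.
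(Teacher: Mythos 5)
The paper offers no proof of this statement: Theorem \ref{thm2.6} is imported verbatim from \cite{flw07} and used as a black box, so there is no in-paper argument to compare yours against. Your proposal is correct and is essentially the proof of the cited reference: the itinerary coding, the fact that $f\left(D(c_i,p^{-\tau})\right)$ is again a disk (so that disks are nested or disjoint, which both defines the incidence relation and drives the pull-back/surjectivity step), and the key injectivity lemma that transitivity forces every admissible itinerary to visit expanding states infinitely often --- obtained exactly via your observation that a state with $\tau_i=0$ has out-degree at most one, so a tail trapped in zero-states would give a closed cycle contradicting irreducibility. The one step you assert without justification, namely that a map scaling all distances by $p^{\tau_i}$ sends a disk of $\mathbb{Q}_p$ onto a disk of radius $p^{\tau_i-\tau}$, is a standard ultrametric fact (provable by matching the $p^k$ sub-disks at each scale and using compactness) and is likewise taken for granted in \cite{flw07}.
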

\subsection{\textit{p-adic polynomial dynamical systems}}
Let $f\in \mathbb{Z}_p[x]$ and $E\subset \mathbb{Z}_p$ be a compact $f$-invariant subset. Denote by $f_{(n)}$ the induced mapping of $f$ on $\mathbb{Z}/p^n\mathbb{Z}_p$, i.e.,
\[f_{(n)}(x\ {\rm mod}\ p^n)\equiv f(x)\quad {\rm mod}\ p^n.\]
We call $\sigma=(x_1,\cdots,x_k)\subset \mathbb{Z}/p^n\mathbb{Z}_p$ a cycle of $f_{(n)}$ of length $k$, if
\[f_{(n)}(x_1)=x_2,\cdots, f_{(n)}(x_i)=x_{i+1},\cdots, f_{(n)}(x_k)=x_1.\]
In this case we also say that $\sigma$ is a $k$-cycle at level $n$.\par
Let $g:= f^k$ be the $k$-th iterate of $f$. For all $x\in\sigma$, we denote
\begin{align}\label{an}
  \alpha_n(x):= g^{\prime}(x),
\end{align}
\begin{align}\label{bn}
  \beta_n(x):= \frac{g(x)-x}{p^n},
\end{align}
\begin{align}\label{An}
  A_n(x):= v_p(\alpha_n(x)-1),
\end{align}
\begin{align}\label{Bn}
  B_n(x):= v_p(\beta_n(x)).
\end{align}
Following \cite{dm01} and \cite{fl11}, we distinguish four cases.
\begin{enumerate}[(a)]
  \item If $\alpha_n\equiv 1\ ({\rm mod}\ p)$ and $\beta_n\not\equiv 0\ ({\rm mod}\ p)$, we say $\sigma$ \textit{grows}.
  \item If $\alpha_n\equiv 1\ ({\rm mod}\ p)$ and $\beta_n\equiv 0\ ({\rm mod}\ p)$, we say $\sigma$ \textit{splits}.
  \item If $\alpha_n\equiv 0\ ({\rm mod}\ p)$, we say $\sigma$ \textit{grows tails}.
  \item If $\alpha_n\not\equiv 0,1\ ({\rm mod}\ p)$, we say $\sigma$ \textit{partially splits}.
\end{enumerate}
Each $\sigma$ at level $n$ will be lift to several cycles at level $n+1$. We call them \textit{lifts} of $\sigma$ and denote any of them as $\tilde{\sigma}$.\par
The following results will play an important role in our proof.
\begin{proposition}\cite[Proposition 1]{fl11}\label{dt1}
  Let $n\ge 1$. Let $\sigma$ be a cycle of $f_{(n)}$ of length $k$ and $\tilde{\sigma}$ be a lift of $\sigma$. Then the following hold. 
  \begin{enumerate}[1)]
    \item If $\sigma$ grows or splits, then any lift $\tilde{\sigma}$ grows or splits.
    \item If $\sigma$ grows tails, then the single lift $\tilde{\sigma}$ grows tails.
    \item If $\sigma$ partially splits, then the lift $\tilde{\sigma}$ of the same length as $\sigma$ partially splits, and other lifts of length $kd$ grow or split, where $d$ is the order of $\alpha_n$ in $\left(\mathbb{Z}/p\mathbb{Z}\right)^*$.
  \end{enumerate}
\end{proposition}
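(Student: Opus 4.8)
The plan is to reduce the lifting problem to the dynamics of a single affine self-map of the residue field $\mathbb{F}_p = \mathbb{Z}/p\mathbb{Z}$, obtained by linearizing $g := f^k$ along the fibre above a point of $\sigma$. The four-way classification (a)--(d) is attached to the \emph{cycle} $\sigma$, which is legitimate because $\alpha_n(x) = (f^k)'(x)$ is, by the chain rule, the same at every point of $\sigma$; I will write $\alpha = \alpha_n$ and $\beta = \beta_n$ for these common residues.

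First I would record the first-order expansion. Fix a point $x \in \sigma$ with an integer representative. For $g \in \mathbb{Z}_p[x]$ the Taylor coefficients $g^{(j)}(x)/j!$ lie in $\mathbb{Z}_p$, so taking $h = tp^n$ with $t \in \{0,\dots,p-1\}$ and using $jn \ge 2n \ge n+1$ for $j \ge 2$, $n \ge 1$ gives
\[
  g(x + tp^n) \equiv g(x) + g'(x)\,t p^n \pmod{p^{n+1}}.
\]
Since $\sigma$ is an $f$-cycle of length $k$ and $g = f^k$, the point $x$ is fixed by $g_{(n)}$, so $g$ maps the fibre $\{x + tp^n : 0 \le t \le p-1\}$ above $x$ into itself. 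Substituting $g(x) = x + \beta p^n$ and $g'(x) \equiv \alpha$, the induced return map $T$ on the fibre, read in the coordinate $t \in \mathbb{F}_p$, is the affine map
\[
  T(t) \equiv \alpha\,t + \beta \pmod{p}.
\]

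Next I would establish the correspondence between lifts and $T$-orbits. A lift $\tilde\sigma$ is an $f_{(n+1)}$-cycle reducing to $\sigma$ mod $p^n$; its points lying in the fibre above $x$ form exactly one periodic $T$-orbit, and if that orbit has length $\ell$ then $\tilde\sigma$ has exact $f$-period $k\ell$ (for $j$ not divisible by $k$, $f^j(\tilde x)$ already differs from $\tilde x$ mod $p^n$; for $j = km$ one uses minimality of $\ell$ under $g$). Thus lifts correspond bijectively to periodic $T$-orbits, orbit length $\ell$ matching lift length $k\ell$. Moreover the chain rule gives $\alpha_{n+1}(\tilde\sigma) = (g^\ell)'(\tilde x) = \prod_{i=0}^{\ell-1} g'(g^i \tilde x)$, and since each $g^i \tilde x$ reduces to $x$ mod $p^n$ we have $g'(g^i\tilde x) \equiv \alpha \pmod p$, whence
\[
  \alpha_{n+1}(\tilde\sigma) \equiv \alpha^{\ell} \pmod p.
\]

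Finally I would run the elementary analysis of $T$ on $\mathbb{F}_p$ and read off the growth type from the last congruence. If $\alpha \equiv 1$ (grows or splits), $T$ is a translation: one $p$-cycle when $\beta \not\equiv 0$, and $p$ fixed points when $\beta \equiv 0$; either way every lift has $\alpha_{n+1} \equiv 1$, proving 1). If $\alpha \equiv 0$ (grows tails), $T$ is constant with its constant value as unique periodic point, so there is a single lift of length $k$ with $\alpha_{n+1} \equiv 0$, proving 2). If $\alpha \not\equiv 0,1$ (partially splits), $T$ has a unique fixed point and acts on the complement as multiplication by $\alpha$, an element of order $d \ge 2$ in $\mathbb{F}_p^*$; the periodic orbits are that fixed point together with $(p-1)/d$ orbits of length $d$. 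The fixed point gives a lift of length $k$ with $\alpha_{n+1} \equiv \alpha \not\equiv 0,1$, which partially splits, while each length-$d$ orbit gives a lift of length $kd$ with $\alpha_{n+1} \equiv \alpha^d \equiv 1$, which grows or splits; this is 3). I expect the main obstacle to be the bookkeeping of the middle step: verifying that the $f$-period of a lift is exactly $k$ times the $T$-period of its trace on one fibre, and that $\alpha_n$ is a genuine invariant of $\sigma$, so that the classification descends to $\sigma$ and the derivative congruence $\alpha_{n+1} \equiv \alpha^\ell$ applies uniformly. Once these are secured the case analysis is immediate, and the split between ``grows'' and ``splits'' is irrelevant to the statement.
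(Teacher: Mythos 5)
Your proof is correct, and it is essentially the argument behind the cited result: the paper itself only quotes \cite[Proposition 1]{fl11}, and the proof there (going back to DesJardins--Zieve) is exactly your linearization $g(x+tp^n)\equiv g(x)+g'(x)tp^n \pmod{p^{n+1}}$, the induced affine map $t\mapsto \alpha t+\beta$ on $\mathbb{F}_p$, the bijection between lifts and its periodic orbits, and the congruence $\alpha_{n+1}\equiv\alpha^{\ell}\pmod p$. The only imprecision is your claim that $\beta_n$ is a common residue of the cycle (only its vanishing mod $p$ is invariant, and only when $\alpha\equiv 1$), but this is harmless since your argument evaluates $\beta$ at a single fixed point of the fibre.
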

\begin{proposition}\cite{dm01}\label{dt2}
  Let $\sigma$ be a growing cycle of $f_{(n)}$ and $\tilde{\sigma}$ be the unique lift of $\sigma$.
  \begin{enumerate}[1)]
    \item If $p\ge 3$ and $n\ge 2$, then $\tilde{\sigma}$ grows.
    \item If $p\ge 5$ and $n\ge 1$, then $\tilde{\sigma}$ grows.
    \item If $p = 3$ and $n=1$, then $\tilde{\sigma}$ grows if and only if $\beta_1(x)\not\equiv \frac{g^{\prime\prime}(x)}{2}\ ({\rm mod}\ p)$.
  \end{enumerate}
\end{proposition}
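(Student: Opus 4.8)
The plan is to follow the evolution of the pair $\bigl(\alpha_n(x),\beta_n(x)\bigr)$ under a single lifting step. Because $\sigma$ grows, $\alpha_n(x)=g'(x)\equiv 1\pmod p$, and since $g'(x)=\prod_{i=0}^{k-1}f'(f^i(x))$ is then a unit, every factor $f'(f^i(x))$ is a unit; hence $f_{(n+1)}$ carries each fibre $\{x+tp^n:t\in\mathbb{Z}/p\mathbb{Z}\}$ bijectively onto the next. The first–order expansion $g(x+tp^n)\equiv x+\bigl(\beta_n(x)+\alpha_n(x)t\bigr)p^n\pmod{p^{n+1}}$ shows that the return map of $g=f^k$ on such a fibre is $t\mapsto t+\beta_n(x)\pmod p$, a translation by the unit $\beta_n(x)$, hence a single $p$-cycle. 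This is exactly why the lift $\tilde\sigma$ is unique and has length $kp$, so that the correct iterate to inspect at level $n+1$ is $G:=g^p=f^{kp}$.

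Next I would settle the multiplier. Writing $\tilde x=x+tp^n$ and $y_i:=g^i(\tilde x)$, every orbit point satisfies $y_i\equiv x\pmod{p^n}$, so $y_i=x+s_ip^n$ with $s_i\equiv t+i\beta_n(x)\pmod p$. The chain rule then gives $\alpha_{n+1}(\tilde x)=G'(\tilde x)=\prod_{i=0}^{p-1}g'(y_i)\equiv g'(x)^p=\alpha_n(x)^p\equiv 1\pmod p$, so the lift automatically meets the first defining condition of a growing cycle. Everything therefore reduces to whether $\beta_{n+1}(\tilde x)=\bigl(G(\tilde x)-\tilde x\bigr)/p^{n+1}$ is a unit (then $\tilde\sigma$ grows) or not (then $\tilde\sigma$ splits).

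The core step is to compute $G(\tilde x)-\tilde x=\sum_{i=0}^{p-1}\bigl(g(y_i)-y_i\bigr)$ modulo $p^{n+2}$, Taylor–expanding each summand about $x$ and using that every divided derivative $g^{(j)}(x)/j!$ lies in $\mathbb{Z}_p$. The constant term contributes $p\bigl(g(x)-x\bigr)=\beta_n(x)p^{n+1}$; the linear term $(\alpha_n(x)-1)p^n\sum_i s_i$ vanishes modulo $p^{n+2}$ since $v_p(\alpha_n-1)\ge 1$ and $\sum_i s_i\equiv 0\pmod p$ for odd $p$; and every cubic or higher term carries $p^{jn}$ with $jn\ge 3n\ge n+2$. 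The only borderline contribution is the quadratic term $\tfrac{g''(x)}{2}p^{2n}\sum_i s_i^2$, whose exponent $2n$ is $\ge n+2$ exactly when $n\ge 2$. Thus for $n\ge 2$ and any $p\ge 3$ one gets $\beta_{n+1}(\tilde x)\equiv\beta_n(x)\not\equiv 0$, so $\tilde\sigma$ grows, which is (1). For $n=1$ the quadratic term survives, and $\sum_{i=0}^{p-1}s_i^2\equiv\beta_1(x)^2\sum_{i=0}^{p-1}i^2\pmod p$ with $\sum_{i=0}^{p-1}i^2=\tfrac{(p-1)p(2p-1)}{6}$; this is $\equiv 0\pmod p$ for $p\ge 5$, giving $\beta_2(\tilde x)\equiv\beta_1(x)\not\equiv 0$ and proving (2), but it is $\equiv 2\pmod 3$ for $p=3$, giving $\beta_2(\tilde x)\equiv\beta_1(x)\bigl(1+g''(x)\beta_1(x)\bigr)\pmod 3$. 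Since $\beta_1(x)$ is a unit, $\tilde\sigma$ grows iff $1+g''(x)\beta_1(x)\not\equiv 0\pmod 3$, and a one–line rearrangement using $\beta_1(x)^2\equiv 1\pmod 3$ converts this into the stated criterion $\beta_1(x)\not\equiv\tfrac{g''(x)}{2}\pmod 3$, which is (3).

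The main obstacle is entirely the borderline bookkeeping at $n=1$: there the quadratic Taylor term of $g$ sits at the same valuation $p^{n+1}$ as the leading term, so one must (i) invoke integrality of the divided derivatives to be certain no higher term interferes, and (ii) evaluate $\sum_{i=0}^{p-1}i^2\bmod p$ to see precisely why $p=3$ is genuinely exceptional while $p\ge 5$ is not. Once that single expansion is in hand, the cases $n\ge 2$ and $p\ge 5$ drop out as immediate corollaries.
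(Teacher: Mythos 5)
The paper does not prove this proposition at all---it is quoted verbatim from \cite{dm01}---so the only meaningful comparison is with that cited source, and your reconstruction is correct and follows essentially the same route used there (and in the related lifting arguments of \cite{fl11}): expand $g^p(\tilde x)-\tilde x=\sum_{i=0}^{p-1}\bigl(g(y_i)-y_i\bigr)$ by Taylor's formula with integral divided derivatives, note that the linear term dies for odd $p$ because $\sum_i s_i\equiv 0\ ({\rm mod}\ p)$, that the quadratic term $\tfrac{g''(x)}{2}p^{2n}\sum_i s_i^2$ is the only borderline contribution, and evaluate $\sum_{i=0}^{p-1}i^2\equiv 0\ ({\rm mod}\ p)$ for $p\ge 5$ versus $\equiv 2\ ({\rm mod}\ 3)$ to isolate the $p=3$, $n=1$ exception. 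All the checkpoints are handled correctly: uniqueness and length $kp$ of the lift via the fibre translation $t\mapsto t+\beta_n(x)$, the multiplier congruence $\alpha_{n+1}(\tilde x)\equiv\alpha_n(x)^p\equiv 1\ ({\rm mod}\ p)$, and the final rearrangement using $\beta_1(x)^2\equiv 1\ ({\rm mod}\ 3)$ and $2^{-1}\equiv -1\ ({\rm mod}\ 3)$ that turns $1+g''(x)\beta_1(x)\not\equiv 0$ into the stated criterion $\beta_1(x)\not\equiv \tfrac{g''(x)}{2}\ ({\rm mod}\ 3)$.
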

\begin{proposition}\cite{dm01}\label{dt4}
  Let $p\ge 3$ and $n\ge 1$. Let $\sigma$ be a partially splitting $k$-cycle of $f_{(n)}$ and $\tilde{\sigma}$ be a lift of $\sigma$ of length $kd$, where $d$ is the order of $\alpha_n$ in $\left(\mathbb{Z}/p\mathbb{Z}\right)^*$.
  \begin{enumerate}[1)]
    \item If $A_{n+1}<nd$, then $\tilde{\sigma}$ splits $A_{n+1}-1$ times then all lifts at level $n+A_{n+1}$ grow forever.
    \item If $A_{n+1}\ge nd$, then $\tilde{\sigma}$ splits at least $nd-1$ times.
  \end{enumerate}
\end{proposition}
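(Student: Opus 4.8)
The plan is to turn the statement into a purely local computation around one point of $\sigma$ and then to follow the pair $(A_\bullet,B_\bullet)$ through successive lifts. Since $\sigma$ is a $k$-cycle of $f_{(n)}$, each of its points is a fixed point of $g:=f^{k}$, and by the symmetry among the $k$ points it suffices to follow a single fixed point $x$ of $g_{(n)}$; the length-$kd$ lift $\tilde\sigma$ is then a $d$-cycle of $g$, i.e.\ a fixed point of $h:=g^{d}=f^{kd}$. Writing a lift as $x+p^{n}s$ and Taylor expanding, one gets $g(x+p^{n}s)-(x+p^{n}s)\equiv p^{n}\bigl(\beta_n+(\alpha_n-1)s\bigr)\pmod{p^{n+1}}$, so the combinatorics of the lifts is governed by the affine map $s\mapsto \alpha_n s+\beta_n$ on the fibre $\mathbb{Z}/p\mathbb{Z}$. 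As $\sigma$ partially splits, $\alpha_n-1$ is a unit: this map has a unique fixed residue $s_0$ (the same-length lift of Proposition~\ref{dt1}), and the remaining residues break into orbits of length $d=\mathrm{ord}(\alpha_n)$, which are precisely the lifts $\tilde\sigma$.

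First I would compute the initial data of $\tilde\sigma$ at level $n+1$. Along such an orbit $s_j-s_0=\alpha_n^{\,j}(s-s_0)$, and using $(\alpha_n-1)s_0=-\beta_n$ a short summation gives $\sum_{j=0}^{d-1}\bigl(\beta_n+(\alpha_n-1)s_j\bigr)=(s-s_0)(\alpha_n^{d}-1)$. This yields $h(y)-y\equiv p^{n}(s-s_0)(\alpha_n^{d}-1)\pmod{p^{n+1}}$ and, by the chain rule, $\tilde\alpha:=h'(y)\equiv\alpha_n^{d}\pmod{p^{n}}$. Hence $A_{n+1}=v_p(\alpha_n^{d}-1)$ and, crucially, $B_{n+1}=v_p\!\left((h(y)-y)/p^{n+1}\right)=A_{n+1}-1$. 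The relation $B_{n+1}=A_{n+1}-1$ is the engine of the whole statement.

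Next I would run the splitting recursion for $\tilde\sigma$, now viewing $h$ as the basic map. The lift formulas give $\beta\mapsto \tfrac1p\bigl(\beta+(\alpha-1)s'\bigr)+(\text{remainder})$ and $\alpha-1\mapsto(\alpha-1)+h''(\cdot)p^{m}s'+(\text{remainder})$ at level $m$. A key point is that $h''=(g^{d})''$ is \emph{small}: differentiating $h'=\prod_j g'(g^{j})$ produces the factor $\tfrac{\alpha_n^{d}-1}{\alpha_n-1}$ of valuation $A_{n+1}$, so the $\alpha$-correction is of higher order than the frozen value $A_{n+1}$. Consequently $A$ stays equal to $A_{n+1}$ while $B$ drops by exactly one at each lift. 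Starting from $B_{n+1}=A_{n+1}-1$, the cycle therefore keeps $B\ge1$ (i.e.\ splits) at the levels $n+1,\dots,n+A_{n+1}-1$ --- that is, $A_{n+1}-1$ times --- and at level $n+A_{n+1}$ one reaches $B=0$, a growing cycle; Proposition~\ref{dt2} then forces growth at every later level. This is case~1).

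The point I expect to be the main obstacle is the quantitative control of the remainders, which is exactly where the threshold $nd$ (rather than the naive $n$) enters. Because $h=g^{d}$ is a $d$-fold composition, the quadratic Taylor errors of $g$ accumulate through the $d$ intermediate orbit points, and to justify both $B_{n+1}=A_{n+1}-1$ and the clean recursion $A\ \text{frozen},\ B\mapsto B-1$ one must show that these accumulated errors cannot overtake the principal terms until the level reaches order $nd$. I would make this precise by bounding the $p$-adic valuation of each composed remainder, summing the geometric contributions over the $d$ steps, and feeding the bound into the recursion: when $A_{n+1}<nd$ the errors stay subordinate throughout the range $n+1,\dots,n+A_{n+1}$, giving the exact count $A_{n+1}-1$ of case~1); when $A_{n+1}\ge nd$ the estimate only guarantees that $B$ remains positive for the first $nd-1$ lifts, yielding at least $nd-1$ splits as in case~2). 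Establishing that the crossover is sharply at $nd$, and not merely of that order, is the crux and demands the explicit remainder bookkeeping for the iterated map $g^{d}$.
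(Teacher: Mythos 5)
Your skeleton is the right one, and you have correctly located the crux; but the proposal does not contain the idea that actually produces the threshold $nd$, so there is a genuine gap. (Note the paper itself gives no argument here: it quotes the result from \cite{dm01}, so your proof must stand alone.) Your orbit-sum computation gives $h(y)-y=p^{n}(\alpha_n^{d}-1)(s-s_0)+O(p^{2n})$ and $h'(y)=\alpha_n^{d}+O(p^{n})$, so the two identities you call the engine, $A_{n+1}=v_p(\alpha_n^{d}-1)$ and $B_{n+1}=A_{n+1}-1$, are justified only in the range $A_{n+1}<n$. Your observation about $h''$ is correct ($h''\approx\alpha_n^{d-1}g''(x)\frac{\alpha_n^{d}-1}{\alpha_n-1}$) and upgrades the degree-two error to valuation at least $A_{n+1}+2n$, but the degree-$j$ Taylor errors of $h$ for $3\le j\le d$ are, under any amount of remainder bookkeeping, only $O(p^{jn})$; so this route proves the proposition with threshold roughly $\min(A_{n+1}+2n,\,3n)$, which agrees with $nd$ only when $d=2$. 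For $d\ge3$ the whole range $2n\le A_{n+1}<nd$ is out of reach, and no summation of "geometric contributions" closes it, because what is needed there is an exact algebraic cancellation, not an estimate.

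The missing ingredient is structural. Let $z^{*}$ be the exact fixed point of $g$ near $x$ (it exists by Hensel since $\alpha_n-1$ is a unit), let $\mu=g'(z^{*})$, $A=v_p(\mu^{d}-1)$, and write $h(z^{*}+w)=z^{*}+\mu^{d}w+\sum_{j\ge2}c_jw^{j}$. Since $\mu$ mod $p$ has exact order $d$, the elements $\mu^{j-1}-1$ are units for $2\le j\le d$, so over $\mathbb{Z}/p^{A}\mathbb{Z}$ (where $\mu^{d}\equiv1$) one can conjugate $g$ near $z^{*}$ to its linear part modulo degree $d+1$ — the first resonant degree is $d+1$ — and hence $v_p(c_j)\ge A$ for \emph{all} $2\le j\le d$, not just $j=2$. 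Every point of every lift of $\tilde\sigma$ lies at distance exactly $p^{-n}$ from $z^{*}$, so this divisibility gives $v_p(h(y)-y)=A+n$ exactly whenever $A<nd$ (the linear term beats $c_jw^{j}$ for $j\le d$ by the divisibility, and for $j>d$ because $jn\ge(d+1)n>A+n$), and $v_p(h(y)-y)\ge(d+1)n$ when $A\ge nd$; cases 1) and 2) then read off level by level, with Proposition \ref{dt2} finishing case 1). Two corrections to your write-up are forced by this: the expansion must be centered at $z^{*}$ rather than at $x+p^{n}s_0$, since otherwise a cross term $p^{2n}d\,s_0(s-s_0)\alpha_n^{d-1}g''(x)$ of valuation exactly $2n$, \emph{not} divisible by $\alpha_n^{d}-1$, pollutes the quadratic error (it is precisely the discrepancy between $\mu^{d}-1$ and $\alpha_n^{d}-1$); and the theorem's $A_{n+1}$ must be identified with $v_p(\mu^{d}-1)$, which can differ from your $v_p(\alpha_n^{d}-1)$ as soon as these valuations reach $n$.
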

The following theorems tell us how to link the property of $\sigma$ with the behavior of $p$-adic polynomial dynamical systems.
\begin{theorem}\cite{vsa06,jay09}\label{mini}
  Let $f\in \mathbb{Z}_p[x]$ and $E\subset \mathbb{Z}_p$ be a compact $f$-invariant set. Then, we have that $f:E\rightarrow E$ is minimal if and only if $f_{(n)}:E/p^n\mathbb{Z}_p\rightarrow E/p^n\mathbb{Z}_p$ is minimal for each $n\ge 1$.
\end{theorem}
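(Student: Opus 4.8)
The plan is to prove both implications through a single transparent observation about the profinite topology on $E$. Since $E\subset\mathbb{Z}_p$ is compact, the reductions $\pi_n:E\to E/p^n\mathbb{Z}_p$ exhibit $E$ as the inverse limit $\varprojlim_n E/p^n\mathbb{Z}_p$, and the balls $\{D(y,p^{-n})\cap E:n\ge 1\}$ form a neighborhood basis at each $y\in E$; moreover, for $z,y\in E$ one has $z\in D(y,p^{-n})$ if and only if $z\equiv y\ ({\rm mod}\ p^n)$, i.e. $\pi_n(z)=\pi_n(y)$. The first step is to record that each $\pi_n$ is a \emph{factor map}: because $f\in\mathbb{Z}_p[x]$ satisfies $|f(x)-f(x')|_p\le|x-x'|_p$ on $\mathbb{Z}_p$, the induced map $f_{(n)}$ is well defined, and $f$-invariance of $E$ gives $f_{(n)}(E/p^n\mathbb{Z}_p)\subseteq E/p^n\mathbb{Z}_p$, so that $\pi_n\circ f=f_{(n)}\circ\pi_n$ with $\pi_n$ continuous and surjective.

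Next I would reformulate density of a single orbit. For fixed $x\in E$, the orbit $\{f^k(x):k\ge 0\}$ is dense in $E$ if and only if it meets every basic ball $D(y,p^{-n})\cap E$, which by the previous paragraph happens exactly when, for every $n$, the forward $f_{(n)}$-orbit of $\pi_n(x)$ exhausts $E/p^n\mathbb{Z}_p$. Since $E/p^n\mathbb{Z}_p$ is a \emph{finite} set, minimality of $f_{(n)}$ on it is equivalent to $E/p^n\mathbb{Z}_p$ being a single $f_{(n)}$-cycle, i.e. to every point's orbit being the whole space. This equivalence is what lets me handle both directions cleanly.

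For the forward implication, if $f:E\to E$ is minimal then for each $n$ the continuous surjection $\pi_n$ sends a dense orbit of a point $x$ to a dense subset of $E/p^n\mathbb{Z}_p$; as $\pi_n$ intertwines $f$ with $f_{(n)}$, this image is the $f_{(n)}$-orbit of $\pi_n(x)$, and density in the finite set $E/p^n\mathbb{Z}_p$ forces it to be the whole set, so each $f_{(n)}$ is minimal (this is just the fact that a factor of a minimal system is minimal). For the converse, suppose every $f_{(n)}$ is minimal. Given any $x,y\in E$ and any $n\ge 1$, the single-cycle property at level $n$ yields $k$ with $f_{(n)}^k(\pi_n(x))=\pi_n(y)$, i.e. $f^k(x)\equiv y\ ({\rm mod}\ p^n)$; as $f^k(x)\in E$ this places a point of the orbit of $x$ inside $D(y,p^{-n})\cap E$. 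Letting $n\to\infty$ shows $y\in\overline{\{f^k(x):k\ge 0\}}$, and since $x,y$ were arbitrary, $f:E\to E$ is minimal.

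Each argument is short, so the only place that genuinely demands care is the converse, where one must be certain that finite-level reachability at every precision really upgrades to topological density. This is exactly where the non-Archimedean structure is essential: the balls $D(y,p^{-n})$ form a nested neighborhood basis, so approximating $y$ modulo $p^n$ for all $n$ is the same as landing in every neighborhood of $y$. I would emphasize, as the conceptual caveat, that the analogous statement can fail for inverse limits of minimal systems in general topological dynamics; what rescues it here is that every quotient is \emph{finite} (so minimality means a single cycle) and that the bonding maps $E/p^{n+1}\mathbb{Z}_p\to E/p^n\mathbb{Z}_p$ are surjective factor maps, which forces the cycle lengths to be compatible across levels.
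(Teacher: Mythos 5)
Your proof is correct. One point of comparison to flag: the paper does not prove this statement at all --- it is quoted as a known result from the cited references (Anashin; Chabert--Fan--Fares), so there is no internal proof to measure your argument against; what you have written is essentially the standard argument from that literature. The three ingredients you isolate are exactly the right ones: (a) $f\in\mathbb{Z}_p[x]$ is $1$-Lipschitz, so each $\pi_n\colon E\to E/p^n\mathbb{Z}_p$ is a continuous surjective factor map intertwining $f$ with $f_{(n)}$; (b) $z\in D(y,p^{-n})$ iff $\pi_n(z)=\pi_n(y)$, and these balls form a nested neighborhood basis; (c) each quotient is finite, so minimality at level $n$ means the quotient is a single $f_{(n)}$-cycle. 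The forward direction is then the fact that factors of minimal systems are minimal, and the converse upgrades level-$n$ reachability to topological density via (b). Both steps are carried out without gaps. Your closing caveat is well taken but slightly misplaced in emphasis: the proof never actually needs the compatibility of cycle lengths across levels, nor the inverse-limit description of $E$; the converse works directly because density of an orbit is tested on the basis $\{D(y,p^{-n})\cap E\}$, each test being a statement about one finite quotient. Those remarks are harmless scene-setting, not load-bearing steps, and the argument as written stands on its own.
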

\begin{theorem}\cite[p.2123]{fl11}\label{grta}
  If $\sigma=(x_1,\cdots,x_k)$ is a cycle of $f_{(n)}$ which grows tails, then $f$ admits a $k$-periodic point $x_0$ in the compact open set $X=\bigsqcup_{i=1}^k(x_i+p^n\mathbb{Z}_p)$ and $X$ is contained in the attracting basin of the periodic orbit $\left(x_0,f(x_0),\cdots,f^{k-1}(x_0)\right)$.
\end{theorem}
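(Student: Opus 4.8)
The plan is to reduce everything to the single polynomial $g := f^k$ restricted to one disk of the cycle, and to exhibit $g$ there as a contraction with an attracting fixed point. Write $D_i := x_i + p^n\mathbb{Z}_p$, so that $X = \bigsqcup_{i=1}^k D_i$. Two observations translate the hypotheses into usable form. First, since $\sigma$ is a cycle of $f_{(n)}$ of length $k$, we have $g(x_i) \equiv x_i \pmod{p^n}$, i.e. $g(x_i) = x_i + p^n\beta_n(x_i)$ with $\beta_n(x_i) \in \mathbb{Z}_p$. Second, since $\sigma$ grows tails, $\alpha_n(x_i) = g'(x_i) \equiv 0 \pmod p$. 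Because $f \in \mathbb{Z}_p[x]$, we also have $g = f^k \in \mathbb{Z}_p[x]$, and its divided derivatives $g^{(j)}(a)/j!$ lie in $\mathbb{Z}_p$ for every $a \in \mathbb{Z}_p$ (as $\binom{i}{j} \in \mathbb{Z}$). I would first check that $g(D_i)\subseteq D_i$: expanding $g(x_i + p^n t) = g(x_i) + g'(x_i)p^n t + \sum_{j\ge 2}\tfrac{g^{(j)}(x_i)}{j!}p^{jn}t^j$ and reducing modulo $p^n$, every term beyond $x_i$ is divisible by $p^n$, so $g(x_i+p^n t) \in D_i$ for all $t \in \mathbb{Z}_p$.

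The heart of the argument is then the contraction estimate on $D_i$. For $x,y \in D_i$ one has $|x-y|_p \le p^{-n}$, and since $y \equiv x_i \pmod{p^n}$ the grows-tails condition propagates to $g'(y) \equiv g'(x_i) \equiv 0 \pmod p$, hence $|g'(y)|_p \le p^{-1}$. Writing $g(x) - g(y) = g'(y)(x-y) + \sum_{j\ge 2}\tfrac{g^{(j)}(y)}{j!}(x-y)^j$, the linear term is bounded by $p^{-1}|x-y|_p$, while each higher term is bounded by $|x-y|_p^j \le |x-y|_p^2 \le p^{-n}|x-y|_p \le p^{-1}|x-y|_p$, using the integrality of the divided derivatives together with $n \ge 1$. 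The ultrametric inequality then yields $|g(x)-g(y)|_p \le p^{-1}|x-y|_p$, so $g$ is a contraction of ratio at most $p^{-1}$ on the complete metric space $D_i$.

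By the contraction mapping principle, $g$ possesses a unique fixed point $x_0^{(i)} \in D_i$ and $g^m(y) \to x_0^{(i)}$ for every $y \in D_i$; alternatively, existence of $x_0^{(i)}$ follows from Lemma \ref{lem2.2} applied to $F(x) = g(x)-x$ with $L=1$, since $|F(x_i)|_p \le p^{-n} < 1 = |F'(x_i)|_p$. Setting $x_0 := x_0^{(1)}$ gives $f^k(x_0) = x_0$. The point $f^{i-1}(x_0)$ lies in $D_i$ (as $f^{i-1}(x_0) \equiv x_i \pmod{p^n}$) and is fixed by $g$, so it coincides with the unique fixed point $x_0^{(i)}$; thus the points $f^{i-1}(x_0)$ occupy the $k$ distinct disks $D_i$. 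Since the $x_i$ are pairwise incongruent modulo $p^n$, we get $f^j(x_0) \not\equiv x_0 \pmod{p^n}$ for $1 \le j \le k-1$, so the $f$-period of $x_0$ is exactly $k$.

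Finally, because $f(D_i) \subseteq D_{i+1}$ with indices taken modulo $k$, and $f^k = g$ contracts each $D_i$ onto $x_0^{(i)}$, every $x \in D_i$ satisfies $f^{mk}(x) = g^m(x) \to x_0^{(i)} = f^{i-1}(x_0)$ as $m \to \infty$. Hence the whole of $X$ lies in the attracting basin of the periodic orbit $\left(x_0, f(x_0), \ldots, f^{k-1}(x_0)\right)$. I expect the main obstacle to be the contraction estimate of the second paragraph: one must control the entire Taylor tail $\sum_{j\ge 2}$ uniformly, and this is precisely where the integrality of the divided derivatives $g^{(j)}(a)/j!$ combined with $n \ge 1$ is indispensable, since without it the higher-order terms need not be dominated by $p^{-1}|x-y|_p$.
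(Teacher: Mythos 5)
Your proof is correct. Note that this paper does not actually prove Theorem \ref{grta} — it quotes it from \cite{fl11} — and your argument (on each disk $x_i+p^n\mathbb{Z}_p$ the grows-tails hypothesis $g'(x_i)\equiv 0 \pmod p$ together with integrality of the divided Taylor coefficients makes $g=f^k$ a contraction of ratio at most $p^{-1}$, whose unique fixed points in the $k$ disks form an attracting $k$-periodic orbit of $f$) is essentially the standard contraction argument of the cited source, so there is nothing to correct.
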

\begin{theorem}\cite[Theorem 1]{fl11}
  Let $f\in \mathbb{Z}_p[x]$ with deg$(f)\ge 2$. Then, we have the following decomposition
  \[\mathbb{Z}_p=\mathcal{P}\sqcup \mathcal{M}\sqcup \mathcal{B}\]
  where $\mathcal{P}$ is a finite set consisting of all periodic points of $f$, set $\mathcal{M}=\bigsqcup_i\mathcal{M}_i$ is the union of all (at most countably many) compact open invariant sets such that each $\mathcal{M}_i$ is a finite union of balls and each subsystem $f:\mathcal{M}_i\rightarrow \mathcal{M}_i$ is minimal, and each point in $\mathcal{B}$ lies in the attracting basin of a periodic orbit or of a minimal subsystem.
\end{theorem}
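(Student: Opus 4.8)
The plan is to translate the statement into a combinatorial problem about how the cycles of the induced finite maps $f_{(n)}$ on $\mathbb{Z}/p^n\mathbb{Z}$ propagate as $n\to\infty$, and then to read off $\mathcal{P}$, $\mathcal{M}$ and $\mathcal{B}$ from the four possible cycle behaviours (grows, splits, grows tails, partially splits). The first reduction is that, by Theorem~\ref{mini}, minimality of a compact subsystem may be certified level by level, so it suffices to control the cycle structure at every finite level. Accordingly, for each residue I would follow the chain $(\sigma_n)_n$ of cycles, where $\sigma_n$ is the cycle of $f_{(n)}$ that $x\bmod p^n$ eventually enters; these chains are compatible under reduction mod $p^{n}$, and Propositions~\ref{dt1}, \ref{dt2} and \ref{dt4} describe precisely how $\sigma_{n+1}$ lies above $\sigma_n$.

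Next I would assign to each chain one of three fates. First, if some $\sigma_{n_0}$ grows tails, then by Proposition~\ref{dt1} its unique lift grows tails at every later level, and Theorem~\ref{grta} produces an honest $k$-periodic point $x_0$ together with a clopen set, made of the corresponding balls, lying in its attracting basin; this feeds $\mathcal{P}$ and $\mathcal{B}$. Second, if $\sigma_{n_0}$ grows, then Proposition~\ref{dt2} forces the unique lift to keep growing, so for every $n\ge n_0$ the tube $\mathcal{M}_i:=\bigsqcup_{i}(x_i+p^{n_0}\mathbb{Z}_p)$ over the residues of $\sigma_{n_0}$ carries a single $f_{(n)}$-cycle; since growing means the multiplier is a unit, the tube is $f$-invariant and $f_{(n)}$ is transitive at each level, so by Theorem~\ref{mini} the subsystem $f\colon\mathcal{M}_i\to\mathcal{M}_i$ is minimal, with $\mathcal{M}_i$ a finite union of balls. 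Third, if $\sigma_{n_0}$ splits or partially splits, Propositions~\ref{dt1} and \ref{dt4} show that after finitely many levels every sibling cycle grows (reducing to the previous case), except possibly along one sub-chain that keeps splitting; such a sub-chain can only shrink to a single exact periodic point, because $f^k(x)-x$ is a nonzero polynomial and so cannot vanish on a whole ball. Thus every chain either converges to a periodic point or enters the growing regime.

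I would then assemble the pieces and verify that they partition $\mathbb{Z}_p$. The set $\mathcal{P}$ consists exactly of the periodic points; here it is useful to note that, since $f\in\mathbb{Z}_p[x]$ gives $|(f^k)'(x_0)|_p\le 1$, no periodic point in $\mathbb{Z}_p$ is repelling, so every periodic orbit is attracting (grows tails) or indifferent (splits). The union $\mathcal{M}=\bigsqcup_i\mathcal{M}_i$ ranges over the growing chains; the associated tubes are pairwise disjoint clopen sets of positive measure, whence there are at most countably many, each minimal as above. Finally, a residue lying on no perpetual cycle is transient: its forward orbit under each $f_{(n)}$ enters a recurrent cycle, so the point is drawn either into the basin of a periodic orbit (via Theorem~\ref{grta}) or into one of the minimal tubes, and hence belongs to $\mathcal{B}$. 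The trichotomy of fates gives both the covering $\mathbb{Z}_p=\mathcal{P}\cup\mathcal{M}\cup\mathcal{B}$ and its disjointness.

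The main obstacle I expect is twofold. The perpetual-growth step relies on Proposition~\ref{dt2}, which guarantees continued growth only for $p\ge5$, for $p=3$ at levels $n\ge2$, and for $p=3,\ n=1$ under the extra condition $\beta_1\not\equiv g''/2\ ({\rm mod}\ p)$; the cases $p=2$ and the initial level for $p=3$ are not covered, so there a growing cycle may later split or grow tails, and the routing into $\mathcal{M}$ must be preceded by a finite-level analysis until the perpetual regime is reached. The second and more delicate point is the finiteness of $\mathcal{P}$: having excluded repelling orbits, one must bound the attracting orbits by the finitely many critical residues of $f$ (the roots of $f'\bmod p$) and bound the indifferent orbits by a Newton-polygon/valuation count, which is where establishing that splitting and partially-splitting chains terminate — again through the non-vanishing of $f^k(x)-x$ on balls — carries the weight of the argument.
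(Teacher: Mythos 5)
First, a point of reference: the paper you are matching does not prove this statement at all --- it is imported verbatim as Theorem 1 of \cite{fl11} and used as a black box --- so your proposal must be measured against Fan--Liao's own argument. Your outline does follow its architecture (chains of cycles of the reductions $f_{(n)}$, classified by Propositions \ref{dt1}, \ref{dt2}, \ref{dt4}, with minimal clopen components arising from perpetually growing chains, attracting basins from grows-tails cycles via Theorem \ref{grta}, and minimality certified level-by-level via Theorem \ref{mini}). However, one of your intermediate claims is false as stated. In the splitting case you assert that ``after finitely many levels every sibling cycle grows\dots except possibly along one sub-chain that keeps splitting.'' Proposition \ref{dt1} only says that lifts of a splitting cycle grow or split, and Proposition \ref{dt4} concerns partially splitting cycles; neither gives such a resolution. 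A splitting cascade can branch at every level with no uniform level at which growth sets in: for $f(x)=x+p^{100}x^{2}$ the side branches around the indifferent fixed point $0$ start growing at levels $100+2v_p(x_0)\to\infty$, producing infinitely many minimal components accumulating at $0$. It can also contain more than one perpetually splitting sub-chain: for $f(x)=x+p^{10}x(x-p)$ the two indifferent fixed points $0$ and $p$ lie in the same residue class mod $p$, and each is the limit of its own perpetually splitting chain inside one cascade. What you actually need, and can prove, is weaker: each perpetually splitting chain of bounded length $k$ shrinks to an honest $k$-periodic point (your $f^{k}(x)-x\not\equiv 0$ argument gives this), and not \emph{all} descendants of a cycle can split forever.

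The two load-bearing gaps are these. (1) Finiteness of $\mathcal{P}$: your route --- no repelling orbits, attracting orbits bounded by critical residues, indifferent orbits by a ``Newton-polygon/valuation count'' --- does not close. For each fixed $k$ the set $\{x: f^{k}(x)=x\}$ is finite, but nothing in your proposal bounds the periods $k$ that occur, and without such a bound $\mathcal{P}$ could a priori be an infinite union of finite sets. This is precisely Pezda's theorem on cycle lengths of polynomials over $\mathbb{Z}_p$; alternatively, for $p\ge 5$ it can be extracted from the very propositions you cite (by Proposition \ref{dt2} the chain of a periodic orbit can never grow, so its length can change at most once, by a factor $d\mid p-1$ from partial splitting, giving period at most $p(p-1)$) --- but some such statement must be proved or invoked, and you do neither. (2) The case $p=2$ (and level $1$ for $p=3$): you correctly note Proposition \ref{dt2} is silent there, but this is not a peripheral ``obstacle.'' Without a grows-forever statement, growing and splitting could alternate along a chain; the tubes would then lose measure at each split, and your candidate minimal sets would be nested intersections of shrinking measure --- Cantor-type sets, not finite unions of balls --- so the conclusion of the theorem itself would fail to emerge from your construction. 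Supplying the extra $2$-adic lifting lemmas that rule this out is what occupies the bulk of \cite{fl11}, so as it stands the hardest case of the theorem is left open.
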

\section{Dynamical systems of $\left(\mathbb{P}^1(\mathbb{Q}_p),f_N\right)$}
Let $N=qp^m$ with $gcd(p,q)=1$. Let $a=p^{v_p(a)}(a_0+a_1p+\cdots)$ with $1\le a_0\le p-1$ and $0\le a_i\le p-1$ for all $i\ge 1$. In this section, we will prove Theorems \ref{sy1} -- \ref{sy4}. We distinguish three cases: $|a|_p>1,\ |a|_p<1$ and $|a|_p=1$.
\subsection{Case $|a|_p>1$}
We need the following lemma.
\begin{lemma}\label{lem3.1}
  Suppose $|a|_p>1$. If $|x|_p^{N-1}<|a|_p$, then there exists a positive integer $n_0$ such that 
  \[|f^{n_0}_N(x)|_p=\frac{1}{|a|_p}.\]
\end{lemma}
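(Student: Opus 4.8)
The plan is to follow the single real quantity $|f_N^n(x)|_p$ under iteration and show that it is forced, in finitely many steps, first into the region $\{\,|\cdot|_p\le 1\,\}$ and then exactly onto the sphere of radius $1/|a|_p$. Everything rests on the behavior of the absolute value under $f_N(x)=(x^N+1)/a$, which I read off from the non-Archimedean (isosceles) inequality.

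First I would record three size rules. If $|x|_p<1$ then $|x^N|_p<1$, so $|x^N+1|_p=1$ and $|f_N(x)|_p=1/|a|_p$; if $|x|_p=1$ then $|x^N+1|_p\le 1$, so $|f_N(x)|_p\le 1/|a|_p<1$; and if $|x|_p>1$ then $|x^N|_p>1=|1|_p$, so $|x^N+1|_p=|x|_p^N$ and $|f_N(x)|_p=|x|_p^N/|a|_p$. The first rule settles $|x|_p<1$ with $n_0=1$; the second settles $|x|_p=1$, since then $|f_N(x)|_p<1$ and so $|f_N^2(x)|_p=1/|a|_p$, i.e.\ $n_0=2$ (the case $f_N(x)=0$ is harmless, as $f_N(0)$ again has absolute value $1/|a|_p$, using $|a|_p>1$).

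The substance is the range $1<|x|_p<R$, where $R:=|a|_p^{1/(N-1)}$; indeed the hypothesis $|x|_p^{N-1}<|a|_p$ is exactly $|x|_p<R$, which already covers the cases $|x|_p\le 1$ treated above. Writing $v_k:=v_p(f_N^k(x))$ and $\alpha:=-v_p(a)>0$, the third size rule becomes the affine recursion $v_{k+1}=Nv_k+\alpha$, valid whenever $v_k<0$. This map has the unique fixed point $v^*=-\alpha/(N-1)$, which corresponds precisely to $|\cdot|_p=R$, and since its slope $N\ge 2$ exceeds $1$ the fixed point is repelling, with $v_k-v^*=N^k(v_0-v^*)$. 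The assumption $|x|_p<R$ reads $v_0>v^*$, so $v_{k+1}-v_k=(N-1)(v_k-v^*)>0$ and the valuations strictly increase while staying above $v^*$; they remain negative---so the recursion stays in force---only until a first index $n_1$ at which $v_{n_1}\ge 0$, i.e.\ $|f_N^{n_1}(x)|_p\le 1$. (Since $v_k>v^*$ throughout, the orbit never leaves $D(0,R)$, in accordance with Theorem~\ref{sy1}.) From there the two easy cases apply, so at most two further iterations put the orbit on the sphere $|\cdot|_p=1/|a|_p$, and $n_0\in\{n_1+1,\,n_1+2\}$.

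I expect the only delicate step to be the crossing from negative to non-negative valuation. There I must confirm that the final use of $v_{n_1}=Nv_{n_1-1}+\alpha$ is legitimate---it is, because $v_{n_1-1}<0$ keeps us in the $|x|_p>1$ regime where $v_p(x^N+1)=Nv$---and that the borderline value $v_{n_1}=0$, should it occur, is absorbed by the $|x|_p=1$ analysis rather than producing a size that stalls on the unit sphere. It is exactly the strict inequality $|a|_p>1$, which forces $1/|a|_p<1$, that rules out any such stalling and drives the orbit strictly below the unit sphere and onto the target radius.
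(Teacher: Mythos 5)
Your proof is correct, and it rests on the same decomposition as the paper's proof: the cases $|x|_p<1$ and $|x|_p=1$ are dispatched in one or two iterations, and the substance is the annulus $1<|x|_p<|a|_p^{1/(N-1)}$, where both arguments exploit that $|f_N(x)|_p$ strictly decreases there. The difference is in how the exit from this annulus is established. The paper argues by contradiction: writing the region as $A_2=\{x:1\le|x|_p^{N-1}<|a|_p\}$, it shows that an orbit remaining in $A_2$ forever would force $|a|_p\ge p^{n(N-1)+1}$ for every $n\ge 1$, which is absurd. You argue directly on valuations: the affine recursion $v_{k+1}=Nv_k+\alpha$ (valid while $v_k<0$) has repelling fixed point $v^*=v_p(a)/(N-1)$, precisely the valuation of points on the sphere $S\bigl(0,|a|_p^{1/(N-1)}\bigr)$, and since the increments $(N-1)(v_k-v^*)$ are positive integers, the valuation climbs to $0$ in at most $-v_p(x)$ steps. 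Your route is more quantitative---it yields the explicit bound $n_0\le -v_p(x)+2$---and it makes conceptually visible why the sphere of radius $|a|_p^{1/(N-1)}$ is the exceptional locus (it is the repelling fixed point of the valuation dynamics), which is exactly the set Theorem \ref{sy1} treats separately; the paper's contradiction argument avoids this bookkeeping but gives no rate. Your handling of the delicate points (the last legitimate use of the recursion at $v_{n_1-1}<0$, the borderline value $v_{n_1}=0$, and the possibility $f_N(x)=0$) is correct, so nothing is missing.
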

\begin{proof}
  We partition the ball $\left\{x\in \mathbb{Q}_p:|x|^{N-1}_p<|a|_p\right\}$ into
  \[A_1:=\left\{x\in \mathbb{Q}_p:|x|^{N-1}_p<1\right\}\]
  and
  \[A_2:=\left\{x\in \mathbb{Q}_p:1\le |x|^{N-1}_p<|a|_p\right\}.\]
  \par
  If $x\in A_1$, then $|f_N(x)|_p=\frac{1}{|a|_p}$.\par
  We claim that for all $x\in A_2$, there exists $n_0$ such that $f_N^{n_0}(x)\in A_1$. By contrary, suppose there exists an $x$ such that for all positive integers $n$, we have $f^n_N(x)\in A_2$.
  Since $x\in A_2$, we have $|f_N(x)|_p\le \frac{|x|_p^N}{|a|_p}< |x|_p$. Since $f_N(x)\in A_2$, we have $1\le |f_N(x)|_p<|x|_p$ which implies $|a|_p\ge p^N$. Since $f_N^2(x)\in A_2$, by the same arguments, we have $|a|_p\ge p^{2N-1}$. By repeating the arguments, we have $|a|_p\ge p^{n(N-1)+1}$ for all positive integers $n$. This leads to a contradiction.
\end{proof}
\begin{proposition}\label{pro3.2}
  Suppose $p\ge 3,\ |a|_p>1$. Then, 
  \[\lim_{n\to\infty}f_N^n(x)=x_0,\quad\forall x\in S\left(0,\frac{1}{|a|_p}\right)\] 
  where $x_0$ is the fixed point of $f_N$ with $|x_0|_p=\frac{1}{|a|_p}$.
\end{proposition}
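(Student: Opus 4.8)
The plan is to show that $f_N$ restricts to a contraction of the sphere $S(0,1/|a|_p)$ into itself and then invoke the contraction mapping principle, which simultaneously yields the fixed point $x_0$ and the convergence of every orbit on the sphere to it. Note first that $S(0,1/|a|_p)$ is a clopen subset of $\mathbb{Q}_p$ (the value group is discrete, so $\{x:|x|_p=1/|a|_p\}=D(0,1/|a|_p)\setminus D(0,p^{-1}/|a|_p)$ is a difference of two balls, hence both open and closed), so it is complete; and it is non-empty by hypothesis. Thus the Banach fixed-point theorem will apply once the two geometric facts below are in place.

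First I would verify invariance, $f_N\bigl(S(0,1/|a|_p)\bigr)\subseteq S(0,1/|a|_p)$. If $|x|_p=1/|a|_p$ then, since $|a|_p>1$, we have $|x^N|_p=|a|_p^{-N}<1=|1|_p$, so the ultrametric inequality forces $|x^N+1|_p=1$ and hence $|f_N(x)|_p=|x^N+1|_p/|a|_p=1/|a|_p$. Consequently the entire forward orbit of any point of the sphere remains on the sphere.

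Next I would establish the contraction estimate. For $x,y\in S(0,1/|a|_p)$, factoring $x^N-y^N$ gives
\[
|f_N(x)-f_N(y)|_p=\frac{1}{|a|_p}\Bigl|\sum_{i=0}^{N-1}x^i y^{N-1-i}\Bigr|_p\,|x-y|_p\le \frac{1}{|a|_p}\,|a|_p^{-(N-1)}\,|x-y|_p=\frac{1}{|a|_p^{N}}\,|x-y|_p,
\]
the middle step being the ultrametric bound on the symmetric sum using $|x|_p=|y|_p=1/|a|_p$. Since $|a|_p>1$ the Lipschitz constant $\lambda:=|a|_p^{-N}$ satisfies $\lambda<1$, so $f_N$ is a genuine contraction on $S(0,1/|a|_p)$. (Alternatively, Corollary \ref{cor2.5} applied with any centre $\omega$ on the sphere and radius $r<|\omega|_p=1/|a|_p$ gives the exact local ratio $|N|_p/|a|_p^{N}<1$.) Combining invariance, completeness and contractivity, the contraction mapping principle produces a unique fixed point $x_0\in S(0,1/|a|_p)$ and yields $\lim_{n\to\infty}f_N^n(x)=x_0$ for every $x\in S(0,1/|a|_p)$; in particular $|x_0|_p=1/|a|_p$.

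I do not expect a serious obstacle: the argument is a clean ultrametric contraction, and the only points needing care are (a) confirming completeness and non-emptiness of the sphere, so that the fixed-point theorem is legitimately available, and (b) checking that the fixed point produced coincides with the distinguished fixed point $x_0$ of $f_N$ named in the statement. For (b), any fixed point of absolute value $1/|a|_p$ lies on the sphere, where uniqueness of the contraction's fixed point forces it to equal $x_0$; if one prefers an independent existence proof, Lemma \ref{lem2.2} applied to $F(x)=x^N-ax+1$ at the approximate root $1/a$ (where $|F(1/a)|_p=|a|_p^{-N}$ and $|F'(1/a)|_p=|a|_p$) locates $x_0$ with $|x_0-1/a|_p\le|a|_p^{-(N+1)}$, whence $|x_0|_p=1/|a|_p$.
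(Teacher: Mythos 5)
Your proof is correct, but it takes a genuinely different and more economical route than the paper. The paper first produces the fixed point via Hensel's lemma (Lemma \ref{lem2.2} applied to $H_N(y)=(y/a)^N-y+1\in\mathbb{Z}_p[x]$), then partitions the sphere into the $p-1$ disks $D_k=D\left(k|a|_p,\tfrac{1}{p|a|_p}\right)$, uses Corollary \ref{cor2.5} (which rests on the binomial estimate of Lemma \ref{lem2.4}) to see that $f_N$ acts on each disk as a similarity with ratio $|N|_p/|a|_p^N$, and finally argues that every $D_k$ is mapped into the single disk $D_{j_0}$ containing $x_0$, inside which the iterates contract to $x_0$. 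You instead prove a single global estimate on the sphere: from $x^N-y^N=(x-y)\sum_{i=0}^{N-1}x^iy^{N-1-i}$ and the ultrametric inequality, $|f_N(x)-f_N(y)|_p\le|a|_p^{-N}|x-y|_p$, and combine it with invariance of the clopen (hence complete) sphere to invoke the Banach fixed-point theorem, which delivers existence, uniqueness, and convergence at once. This bypasses Hensel's lemma, Corollary \ref{cor2.5}, and Lemma \ref{lem2.4} entirely, and your argument never uses $p\ge3$: it covers $p=2$ as well, so it also subsumes the paper's subsequent Remark. What the paper's longer route buys is the exact local scaling ratio $|N|_p/|a|_p^N$ and the explicit disk-to-disk combinatorics, which are reused later (e.g.\ in Proposition \ref{pro4.1}); your Lipschitz bound $|a|_p^{-N}$ is only an upper estimate, but that is all the proposition needs.

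One small caveat: your parenthetical alternative for locating $x_0$ applies Lemma \ref{lem2.2} to $F(x)=x^N-ax+1$, but that lemma requires $F\in\mathcal{O}[X]$, which fails here since $|a|_p>1$; one must rescale as the paper does (working with $(y/a)^N-y+1$) before Hensel's lemma is available. Since your main argument already settles existence and identification of $x_0$ through uniqueness of the contraction's fixed point, this aside is inessential and the proof stands without it.
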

\begin{proof}
  Let $H_N\in\mathbb{Z}_p[x]$ be defined by $H_N(y)=(\frac{y}{a})^N-y+1,\ \forall y\in \mathbb{Z}_p$. Then, for all $y\in\mathbb{Z}_p$,
  \[|H^{\prime}_N(y)|_p=\left|\frac{Ny^{N-1}}{a^N}-1\right|_p=1,\] 
  which implies $H^{\prime}_N(y)\not\equiv 0\ ({\rm mod}\ p)$.\par
   Since $H_N(y)\equiv 0\ ({\rm mod}\ p)$ only has one solution $y\equiv 1\ ({\rm mod}\ p)$, by Lemma \ref{lem2.2}, we have $H_N(y)=0$ has only one solution $y_0\in S(0,1)$. Let $x_0=\frac{y_0}{a}$. Then $|x_0|_p=\frac{1}{|a|_p}$ and $x_0$ is a fixed point of $f_N(x)$.\par
   Let $D_k:=D\left(k|a|_p,\frac{1}{p|a|_p}\right)$ with $1\le k\le p-1$. For all $x,y\in D_k$, by Corollary \ref{cor2.5}, we have
   \[|f_N(x)-f_N(y)|_p=\frac{1}{|a|_p}|N(k|a|_p)^{N-1}|_p|x-y|_p= \frac{|N|_p}{|a|^N_p}|x-y|_p.\]
   So, 
   \[f_N(D_k)=D\left(\frac{(k|a|_p)^N+1}{a},\ \frac{|N|_p}{p|a|_p^{N+1}}\right).\]
   Note that 
   \[S\left(0,\frac{1}{|a|_p}\right)=\bigsqcup_{k=1}^{p-1}D_k.\]
   Then, for any $1\le k\le p-1$, there exists exactly one $j$, such that $f_N(D_k)\subset D_j$. If $j_0 a|a|_p\equiv ax_0\ ({\rm mod}\ p) $, then $D_{j_0}=D\left(x_0,\frac{1}{p|a|_p}\right)$. Hence, $f_N(D_{j_0})\subset D_{j_0}$.
   Since
   \[\left|j|a|_p-\frac{(k|a|_p)^N+1}{a}\right|_p=\frac{1}{|a|_p}\left|ja|a|_p-1-(k|a|_p)^N\right|_p\le \frac{1}{|a|_p}\max\left\{\left|ja|a|_p-1\right|_p,\frac{1}{|a|^N_p}\right\},\]
   we see that such $j$ is independent of $k$, which implies $f_N(D_k)\subset D_{j_0}$ for all $k$. 
   Hence, $f_N\left(S\left(0,\frac{1}{|a|_p}\right)\right)\in D_{j_0}$. Since
   \[f_N^n(D_{j_0})=D\left(x_0,r^n\frac{1}{p|a|_p}\right),\]
   where $r=\frac{|N|_p}{|a|^N_p}<1$, we have $\lim_{n\to\infty}f_N^n(x)=x_0,\quad\forall x\in S\left(0,\frac{1}{|a|_p}\right)$.
\end{proof}
\begin{remark}
  If $p=2$, we define $D_{\pm 1}=D\left(\pm x_0,\frac{1}{4|a|_2}\right)$. Noting that $S\left(0,\frac{1}{|a|_2}\right)=D_{-1}\bigsqcup D_1$, we get the same conclusion as in Proposition \ref{pro3.2}.
\end{remark}
\begin{proof}[\textbf{Proof of Theorem \ref{sy1}}]
	Combining Proposition \ref{pro3.2} and Lemma \ref{lem3.1}, we need only consider two subcases.
 \begin{enumerate}[1)]
	 \item If $|x|_p^{N-1}>|a|_p$, then we have 
	 \[|f_N(x)|_p=\frac{|x|_p^N}{|a|_p}>|x|_p.\]
	 Hence, the absolute values of the iterations $f_N^n(x)$ are strictly increasing. Therefore,
	 \[\lim_{n\to\infty}f_N^n(x)=\infty,\quad\forall x\in \{\infty\}\cup\mathbb{Q}_p\setminus D\left(0,|a|_p^{\frac{1}{N-1}}\right).\]
	 \item If $|x|_p^{N-1}=|a|_p$, then by noting that 
	 \[\left|\frac{1}{a|a|_p}\right|_p=1,\quad \left|\frac{1}{a}|a|_p^{N-1}\right|_p=|a|_p^{-N}<1,\]
	 we have the polynomial $g_N$ defined by \eqref{dyn1} is of coefficients in $\mathbb{Z}_p$.
 \end{enumerate}
	 By defining $\pi(x)= |a|_p^{\frac{1}{N-1}}x$ as a conjugacy, we conclude that $\left(S\left(0,|a|_p^{\frac{1}{N-1}}\right), f_N\right)$ is topologically conjugate to $(S(0,1), g_N)$. 
\end{proof}
\subsection{Case $|a|_p<1$}
\begin{proposition}\label{pro3.6}
  Suppose $|a|_p<1$. Then, $\lim_{n\to\infty}f_N^n(x)=\infty,\quad\forall x\not\in S(0,1)$.
\end{proposition}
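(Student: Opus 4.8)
The plan is to split the region $\mathbb{Q}_p\setminus S(0,1)$ according to the size of $|x|_p$, treating the two scales $|x|_p<1$ and $|x|_p>1$ separately, and to dispose of the point $\infty$ at once. Since $f_N$ is a polynomial of degree $N\ge 2$, the point $\infty$ is a fixed point, so $f_N^n(\infty)=\infty$ for all $n$ and there is nothing to prove there. Thus I may restrict attention to $x\in\mathbb{Q}_p$ with $|x|_p\neq 1$.

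First I would treat the case $|x|_p>1$, which I expect to carry the dynamical content. Here $|x^N|_p=|x|_p^N>1=|1|_p$, so the isosceles (non-Archimedean) triangle inequality gives $|x^N+1|_p=|x|_p^N$, whence
\[
|f_N(x)|_p=\frac{|x^N+1|_p}{|a|_p}=\frac{|x|_p^N}{|a|_p}.
\]
Because $|a|_p<1$ forces $|a|_p^{-1}\ge p$ and $N\ge 2$, this yields $|f_N(x)|_p\ge p\,|x|_p^N> |x|_p>1$. In particular the set $\{|x|_p>1\}$ is forward invariant, and along any orbit starting there the sequence $|f_N^n(x)|_p$ is a strictly increasing sequence of integer powers of $p$; hence $|f_N^n(x)|_p\to\infty$, which by the formula $\rho(f_N^n(x),\infty)=1/|f_N^n(x)|_p$ means $f_N^n(x)\to\infty$ in the spherical metric.

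Next I would reduce the case $|x|_p<1$ to the one just handled. Now $|x^N|_p=|x|_p^N<1=|1|_p$, so again by the ultrametric inequality $|x^N+1|_p=1$, and therefore
\[
|f_N(x)|_p=\frac{1}{|a|_p}>1.
\]
Thus a single application of $f_N$ moves $x$ into the region $\{|y|_p>1\}$, after which the previous paragraph applies to $f_N(x)$ and gives $f_N^n(x)\to\infty$.

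I do not anticipate a genuine obstacle: the entire argument rests on the two elementary evaluations of $|x^N+1|_p$ via the non-Archimedean absolute value together with the hypothesis $|a|_p<1$. The only point that deserves a line of care is the monotonicity step, namely verifying that $\{|x|_p>1\}$ is forward invariant so that the strict increase of $|f_N^n(x)|_p$ persists for all $n$; once that is in place, divergence of an increasing sequence of powers of $p$ immediately yields convergence to $\infty$.
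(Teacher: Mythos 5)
Your proof is correct and follows essentially the same route as the paper: split off $|x|_p>1$, where the ultrametric equality $|x^N+1|_p=|x|_p^N$ gives $|f_N(x)|_p=|x|_p^N/|a|_p>|x|_p$ and hence strictly increasing absolute values diverging to $\infty$, and then reduce $|x|_p<1$ to that case via $|f_N(x)|_p=1/|a|_p\ge p$. Your explicit treatment of the fixed point $\infty$ and of the spherical metric is a minor addition of care, not a different argument.
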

\begin{proof}
  For all $x\in\mathbb{Q}_p$ such that $|x|_p\ge p$, we have 
  \[|f_N(x)|_p=\frac{|x|_p^N}{|a|_p}>|x|_p.\]
  Thus, the absolute values of the iterations $f_N^n(x)$ are strictly increasing, and for any $x\in\mathbb{Q}_p$ with 
  $|x|_p\ge p$, we have 
  \[\lim_{n\to\infty}f_N^n(x)=\infty.\]
  On the other hand, for all $x\in\mathbb{Q}_p$ with $|x|_p\le \frac{1}{p}$, we have 
  \[|f_N(x)|_p=\frac{1}{|a|_p}\ge p.\]
  Then, repeating the above argument completes the proof.
\end{proof}
\begin{proposition}\label{pro3.7}
  Let $p\ge 3$. Suppose $|a|_p<1$ and $gcd(p-1,q)\nmid \frac{p-1}{2}$. Then, $\lim\limits_{n\to\infty}f_N^n(x)=\infty,\quad\forall x\in\mathbb{Q}_p\cup\{\infty\}$.
\end{proposition}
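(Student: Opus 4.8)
The plan is to combine Proposition \ref{pro3.6} with the arithmetic input of Lemma \ref{lem2.3}. By Proposition \ref{pro3.6}, any point $x$ with $|x|_p \neq 1$ already satisfies $\lim_{n\to\infty} f_N^n(x) = \infty$, and the point $\infty$ is fixed by $f_N$ (since $\deg f_N = N \geq 2$). Hence the whole statement reduces to a single claim: every point of the unit sphere $S(0,1)$ leaves $S(0,1)$ after one application of $f_N$. Once that is established, applying Proposition \ref{pro3.6} to the image point finishes the argument.

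To verify this claim I would take $x \in S(0,1)$, i.e. $x \in \mathbb{Z}_p^*$, and reduce modulo $p$ to a residue in $\mathbb{F}_p^*$. The hypothesis $\gcd(p-1,q) \nmid \frac{p-1}{2}$ is exactly the condition under which Lemma \ref{lem2.3} guarantees that the congruence $t^N + 1 \equiv 0 \ ({\rm mod}\ p)$ has \emph{no} solution. Consequently $x^N + 1 \not\equiv 0 \ ({\rm mod}\ p)$, so that $|x^N + 1|_p = 1$ for every $x \in S(0,1)$.

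With this in hand, for any $x \in S(0,1)$ one computes
\[
|f_N(x)|_p = \frac{|x^N+1|_p}{|a|_p} = \frac{1}{|a|_p} \geq p > 1,
\]
using $|a|_p < 1$. Thus $f_N(x) \notin S(0,1)$, and Proposition \ref{pro3.6} applied to $f_N(x)$ yields $\lim_{n\to\infty} f_N^n(f_N(x)) = \infty$, that is $\lim_{n\to\infty} f_N^n(x) = \infty$. Together with the cases $|x|_p \neq 1$ and $x = \infty$ recorded above, this covers all $x \in \mathbb{Q}_p \cup \{\infty\}$.

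I do not expect a genuine obstacle here: the only substantive step is translating the arithmetic hypothesis into the non-solvability of $t^N + 1 \equiv 0 \ ({\rm mod}\ p)$ through Lemma \ref{lem2.3}, after which the conclusion follows from a one-line absolute-value computation feeding into Proposition \ref{pro3.6}. The only point needing a little care is confirming that the image $f_N(x)$ actually lands in the region $\{\,|x|_p > 1\,\}$ (indeed $|x|_p \geq p$) where the increasing-modulus argument of Proposition \ref{pro3.6} applies, which the displayed inequality makes explicit.
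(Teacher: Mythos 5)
Your proposal is correct and follows essentially the same route as the paper: invoke Lemma \ref{lem2.3} to rule out solutions of $x^N+1\equiv 0\ ({\rm mod}\ p)$, deduce $|f_N(x)|_p=\tfrac{1}{|a|_p}>1$ on $S(0,1)$, and then feed the image point into Proposition \ref{pro3.6}. The only cosmetic difference is that the paper states the bound $|x^N+1|_p\ge 1$ uniformly for all $x\in\mathbb{Q}_p$ rather than splitting off the sphere first, which changes nothing of substance.
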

\begin{proof}
  By Lemma \ref{lem2.3}, if $gcd(p-1,q)\nmid \frac{p-1}{2}$, then $|x^N+1|_p\ge 1$ for all $x\in \mathbb{Q}_p$. Hence, for all $x\in\mathbb{Q}_p$,
  \[|f_N(x)|_p=\frac{|x^N+1|_p}{|a|_p}\ge \frac{1}{|a|_p}>1.\]
  Therefore, by Proposition \ref{pro3.6}, we have $\lim_{n\to\infty}f_N^n(x)=\infty,\quad\forall x\in \mathbb{Q}_p\cup\{\infty\}$.
\end{proof}
\begin{proposition}\label{pro3.8}
  Let $p\ge 3$. Suppose $|a|_p<1,\ gcd(p-1,q)\mid \frac{p-1}{2}$ and $|a|_p<|N|_p$. If $f_N$ has $\ell\ge 2$ different fixed points $\{\omega_{i,N}\}_{i=1}^{\ell}$ with $\omega_{i,N}\not\equiv \omega_{j,N}\ ({\rm mod}\ p)$ and $\ell\ge 2$, then $(\mathcal{K}_{f_N}, f_N)$ is topologically conjugate to $(\Sigma_{\ell}, \sigma)$, where 
  \[\mathcal{K}_{f_N}:=\bigcap_{n=0}^{\infty}f_N^{-n}(X)\quad \text{with}\quad X=\bigsqcup_{i=1}^{\ell} D(\omega_{i,N},\frac{1}{p}).\]
\end{proposition}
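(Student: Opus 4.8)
The plan is to realize $(\mathcal{K}_{f_N}, f_N)$ as a transitive $p$-adic weak repeller whose incidence matrix $A$ is the all-ones $\ell\times\ell$ matrix, and then to invoke Theorem \ref{thm2.6}: a transitive weak repeller is topologically conjugate to $(\Sigma_A,\sigma)$, and for the all-ones matrix one has $\Sigma_A=\Sigma_\ell$, the full shift. Thus the proof reduces to three verifications on the set $X=\bigsqcup_{i=1}^\ell D(\omega_{i,N},\tfrac1p)$: that $f_N$ dilates each ball by a common integer factor $p^\tau$ with $\tau\ge 1$ (the expansion condition \eqref{sfb}, so that all $\tau_i=\tau>0$), that $X\subset f_N(X)$, and that the image of each ball covers all of $X$ (so that every entry $A_{i,j}=1$).

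First I would pin down the fixed points. A short valuation argument shows that every fixed point $\omega$ of $f_N$ lies on $S(0,1)$: from $a\omega=\omega^N+1$, the case $|\omega|_p>1$ forces $|\omega|_p^{N-1}=|a|_p<1$, a contradiction, while $|\omega|_p<1$ gives $|a\omega|_p<1=|\omega^N+1|_p$, again impossible; hence $|\omega_{i,N}|_p=1$ for each $i$. In particular $D(\omega_{i,N},\tfrac1p)\subset\mathbb{Z}_p$, and since $\omega_{i,N}\not\equiv\omega_{j,N}\ ({\rm mod}\ p)$ the balls are pairwise disjoint, so $X$ is the disjoint union claimed. I would then apply Corollary \ref{cor2.5} with $\omega=\omega_{i,N}$ and $r=\tfrac1p<1=|\omega_{i,N}|_p$ to obtain, for all $x,y\in D(\omega_{i,N},\tfrac1p)$,
\[
|f_N(x)-f_N(y)|_p=\frac{|N|_p}{|a|_p}\,|\omega_{i,N}|_p^{N-1}\,|x-y|_p=\frac{|N|_p}{|a|_p}\,|x-y|_p=p^{\tau}|x-y|_p,
\]
where $\tau=v_p(a)-v_p(N)$. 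The hypothesis $|a|_p<|N|_p$ is exactly $\tau\ge 1$, so $(\mathcal{K}_{f_N},f_N)$ satisfies \eqref{sfb} with common positive exponent, hence is a weak repeller.

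Next, since $f_N$ fixes $\omega_{i,N}$ and dilates $D(\omega_{i,N},\tfrac1p)$ by the exact factor $p^\tau$, it maps this ball bijectively onto $D(\omega_{i,N},p^{\tau-1})$. Because $\tau\ge 1$ we have $p^{\tau-1}\ge 1$, so
\[
f_N\!\left(D(\omega_{i,N},\tfrac1p)\right)=D(\omega_{i,N},p^{\tau-1})\supseteq D(\omega_{i,N},1)=\mathbb{Z}_p\supseteq X .
\]
Thus every ball of $X$ is mapped over all of $X$; in particular $X\subset f_N(X)$ and the incidence matrix is the all-ones matrix, which is irreducible. The repeller is therefore transitive, $\Sigma_A=\Sigma_\ell$, and Theorem \ref{thm2.6} yields the conjugacy $(\mathcal{K}_{f_N},f_N)\cong(\Sigma_\ell,\sigma)$.

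I expect the only genuinely delicate point to be confirming that the image of each small ball is large enough to cover every ball of $X$: this is what forces $A$ to be the full all-ones matrix rather than some proper subshift of finite type, and it hinges entirely on the inequality $|a|_p<|N|_p$ producing $\tau\ge 1$. Everything else is routine non-Archimedean bookkeeping once Corollary \ref{cor2.5} supplies the exact dilation factor and the valuation argument places the fixed points on $S(0,1)$.
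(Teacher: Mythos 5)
Your proposal is correct and follows essentially the same route as the paper: apply Corollary \ref{cor2.5} on each ball $D(\omega_{i,N},\tfrac1p)$ to get the exact expansion factor $\tfrac{|N|_p}{|a|_p}\ge p$, conclude that $f_N(D(\omega_{i,N},\tfrac1p))$ contains every ball of $X$ (so the incidence matrix is the all-ones matrix), and invoke Theorem \ref{thm2.6}. The only difference is that you explicitly verify $|\omega_{i,N}|_p=1$ via the valuation argument — a point the paper leaves implicit but which is genuinely needed for Corollary \ref{cor2.5} to yield the factor $\tfrac{|N|_p}{|a|_p}$ — so your write-up is, if anything, slightly more complete.
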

\begin{proof}
  Let $D_i=D(\omega_{i,N},\frac{1}{p})$. Then, by Corollary \ref{cor2.5}, for all $x,y \in D_i$, we have 
  \[|f_N(x)-f_N(y)|_p=\frac{|N|_p}{|a|_p}|x-y|_p>|x-y|_p.\]
  Hence, $f_N(D_i)=D\left(\omega_{i,N},\frac{|N|_p}{p|a|_p}\right)$.\par
  Noting that 
  \[|\omega_{i,N}-\omega_{j,N}|_p= 1\le \frac{|N|_p}{p|a|_p},\] 
  we have $D_j\subset f_N(D_i)$ for all $i,j\in \{1,\cdots ,\ell\}$. By Theorem \ref{thm2.6}, we complete the proof.
\end{proof}
\begin{corollary}\label{cor3.9}
  Let $p\ge 3$. Suppose $|a|_p<1,\ |a|_p<|N|_p^2$ and $gcd(p-1,q)\mid \frac{p-1}{2}$. If $gcd(p-1,q)\ge 2$, then $(\mathcal{A}_{f_N}, f_N)$ is topologically conjugate to $(\Sigma_{gcd(p-1,q)}, \sigma)$, where $\mathcal{A}_{f_N}=\left\{x\in\mathbb{Q}_p:|f_N^n(x)|_p=1,\ \forall n\ge 0\right\}$.
\end{corollary}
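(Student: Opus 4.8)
The plan is to derive this from Proposition \ref{pro3.8}, whose hypotheses are all available here: since $|N|_p\le 1$ we have $|a|_p<|N|_p^2\le |N|_p$, and $\gcd(p-1,q)\ge 2$ gives $\ell:=\gcd(p-1,q)\ge 2$. Thus only two things remain to be supplied, namely (a) that $f_N$ actually possesses $\ell=\gcd(p-1,q)$ fixed points with pairwise distinct residues mod $p$, and (b) the identification $\mathcal{A}_{f_N}=\mathcal{K}_{f_N}$. Once these are in hand, Proposition \ref{pro3.8} gives $(\mathcal{K}_{f_N},f_N)\cong(\Sigma_\ell,\sigma)$ and we are done.

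For (a) I would realize the fixed points as the roots in $\mathbb{Z}_p$ of $G(x):=x^N-ax+1\in\mathbb{Z}_p[x]$, since $f_N(x)=x\iff G(x)=0$. By Lemma \ref{lem2.3} the congruence $x^N\equiv -1\ ({\rm mod}\ p)$ has exactly $\ell$ solutions $r_1,\dots,r_\ell$; let $\zeta_i\in\mu_{p-1}(\mathbb{Q}_p)$ be the Teichm\"uller representative of $r_i$. The decisive observation is that, because $\zeta_i^{p^m}=\zeta_i$ for a root of unity of order dividing $p-1$, one has $\zeta_i^N=\zeta_i^{qp^m}=\zeta_i^q$, and $\zeta_i^q\in\mu_{p-1}$ is $\equiv r_i^q\equiv r_i^N\equiv -1\ ({\rm mod}\ p)$, whence $\zeta_i^q=-1$; that is, $\zeta_i^N=-1$ \emph{exactly}. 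Consequently $G(\zeta_i)=-a\zeta_i$, so $|G(\zeta_i)|_p=|a|_p$, while $|G'(\zeta_i)|_p=|N\zeta_i^{N-1}-a|_p=|N|_p$ using $|a|_p<|N|_p$. The hypothesis $|a|_p<|N|_p^2$ is then precisely the Hensel inequality $|G(\zeta_i)|_p<|G'(\zeta_i)|_p^2$, so Lemma \ref{lem2.2} with $L=1$ produces a unique root $\omega_{i,N}\in\mathbb{Z}_p$ with $|\omega_{i,N}-\zeta_i|_p\le |a|_p/|N|_p<1$, hence $\omega_{i,N}\equiv r_i\ ({\rm mod}\ p)$. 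These give $\ell$ fixed points with distinct residues, and since every fixed point in $S(0,1)$ must reduce to some $r_i$, the number of distinct fixed-point residues is exactly $\ell=\gcd(p-1,q)$.

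For (b) I would note that $D(\omega_{i,N},\tfrac1p)=r_i+p\mathbb{Z}_p$, so $X$ is exactly the set of $x\in\mathbb{Z}_p$ whose residue solves $y^N\equiv -1\ ({\rm mod}\ p)$. Because $|a|_p<1$, the chain $|f_N(z)|_p=1\Rightarrow |z^N+1|_p=|a|_p<1\Rightarrow z^N\equiv -1\ ({\rm mod}\ p)\Rightarrow z\in X$ holds for every $z\in\mathbb{Q}_p$. Applying it to $z=f_N^n(x)$ (using $|f_N^{n+1}(x)|_p=1$) shows that any $x\in\mathcal{A}_{f_N}$ has $f_N^n(x)\in X$ for all $n\ge 0$, i.e. $\mathcal{A}_{f_N}\subseteq\bigcap_n f_N^{-n}(X)=\mathcal{K}_{f_N}$; the reverse inclusion is immediate since every point of $X$ lies in $S(0,1)$. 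Hence $\mathcal{A}_{f_N}=\mathcal{K}_{f_N}$, and Proposition \ref{pro3.8} yields $(\mathcal{A}_{f_N},f_N)=(\mathcal{K}_{f_N},f_N)\cong(\Sigma_{\gcd(p-1,q)},\sigma)$.

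The main obstacle is step (a). A naive residue lift $x_0\equiv r_i\ ({\rm mod}\ p)$ only guarantees $|G(x_0)|_p$ of size $1/p$, which violates the Hensel bound $<|N|_p^2=p^{-2m}$ as soon as $m\ge 1$, so ordinary Hensel lifting of the congruence solutions fails in the wild case $p\mid N$. Choosing the Teichm\"uller base point $\zeta_i$, at which the term $x^N+1$ vanishes identically so that $G(\zeta_i)=-a\zeta_i$ has the small size $|a|_p$, is exactly the device that converts the hypothesis $|a|_p<|N|_p^2$ into the Hensel inequality and makes the lifting go through; I expect verifying this choice (and that $\zeta_i^N=-1$ exactly) to be the crux of the argument.
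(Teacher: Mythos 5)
Your proposal is correct, and its overall skeleton (reduce to Proposition \ref{pro3.8}, count the fixed points, then show $\mathcal{A}_{f_N}=\mathcal{K}_{f_N}$) matches the paper's; but the crux --- producing exactly $\gcd(p-1,q)$ fixed points when $p\mid N$ --- is handled by a genuinely different argument. The paper works with $F(x)=x^N-ax+1$ and an arbitrary solution $x_0$ of $x^N+1\equiv 0\ ({\rm mod}\ p)$: since a naive lift only gives $|F(x_0)|_p\le p^{-1}$, it constructs by an inductive digit-by-digit computation an approximate root $\omega_0$ with $F(\omega_0)\equiv 0\ ({\rm mod}\ p^{2v_p(N)+1})$, unique modulo $p^{v_p(N)+1}$, and then invokes the generalized Hensel statement (Lemma \ref{lem2.2}) with $L=2$, $s=2v_p(N)+1$, verifying the two valuation inequalities by hand. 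You instead choose the Teichm\"uller representative $\zeta_i\in\mu_{p-1}(\mathbb{Q}_p)$ of each residue solution; the identity $\zeta_i^{p^m}=\zeta_i$ together with injectivity of reduction on $\mu_{p-1}$ forces $\zeta_i^N=-1$ exactly, so $G(\zeta_i)=-a\zeta_i$ has size $|a|_p$, $|G'(\zeta_i)|_p=|N|_p$, and the hypothesis $|a|_p<|N|_p^2$ becomes verbatim the classical Hensel inequality, letting you apply Lemma \ref{lem2.2} with $L=1$. Your route is shorter and makes transparent exactly where $|a|_p<|N|_p^2$ enters (and gives the clean localization $|\omega_{i,N}-\zeta_i|_p\le |a|_p/|N|_p$); the paper's route is more elementary in that it never appeals to roots of unity, and its digit-lifting yields the uniqueness of the approximate root as a byproduct. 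Your treatment of $\mathcal{A}_{f_N}=\mathcal{K}_{f_N}$ is the same as the paper's (you argue the inclusion $\mathcal{A}_{f_N}\subseteq\mathcal{K}_{f_N}$ directly where the paper argues contrapositively), so no gap there either.
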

\begin{proof}
  By using Lemma \ref{lem2.2} in $\mathbb{Q}_p$, we have that if $\exists\ x_0\in \mathbb{Z}_p$ and $s,L\ge 1$ such that $F(x_0)\equiv 0\ ({\rm mod}\ p^s)$ and 
  \begin{align}\label{phl1}
    v_p\left(F^{(L)}(x_0)\right)-v_p(L!)<s-v_p\left(F^{\prime}(x_0)\right),
  \end{align}
  \begin{align}\label{phl2}
    v_p\left(F^{\prime}(x_0)\right)+v_p\left(F^{(k)}(x_0)\right)-v_p\left(F^{(k+1)}(x_0)\right)<s-v_p(k+1),\quad\forall\ 1\le k\le L-1,
  \end{align}
  (if $L=1$, we only need \eqref{phl1}) then there exists a unique $x\in \mathbb{Z}_p$ such that $F(x)=0$ and $x\equiv x_0\ ({\rm mod}\ p^{s-v_p(F^{\prime}(x_0))})$.\par
  Now, we will prove the number $\ell$ in Proposition \ref{pro3.8} is $gcd(p-1,q)$.
  By Lemma \ref{lem2.3}, the equation $x^N+1\equiv 0\ ({\rm mod}\ p)$ has $gcd(p-1,q)$ different solutions. Suppose $x_0$ is one of the solutions. Let $F(x)=x^N-ax+1$. Now, we distinguish two different cases.
  \begin{enumerate}[1)]
    \item If $gcd(N,p)=1$, then we choose $L=s=1$.
    Since $F(x_0)=x_0^N-ax_0+1\equiv x_0^N+1\equiv 0\ ({\rm mod}\ p)$ and $v_p(F^{\prime}(x_0))=v_p(N)=0$, we have \eqref{phl1} holds. Hence, there exists a unique $x\in \mathbb{Z}_p$ such that $F(x)=0$ and $x\equiv x_0\ ({\rm mod}\ p)$. So, we have $\ell=gcd(p-1,q)$.
    \item If $gcd(N,p)\neq 1$, then we choose $L=2$ and $s=2v_p(N)+1$.
    Since $|a|_p<|N|_p^2$, then we have 
    \[F(x)\equiv x^N+1\ \left({\rm mod}\ p^{2v_p(N)+1}\right).\]
    Let $x=\bar{x}+x_{v_p(N)}p^{v_p(N)}$, where $\bar{x}=x_0+x_1p+\cdots +x_{v_p(N)-1}p^{v_p(N)-1}$ with $0\le x_i\le p-1$ for all $1\le i\le v_p(N)$.
    Then, we have 
    \[F(x)\equiv \bar{x}^N+1+N\bar{x}^{N-1}x_{v_p(N)}p^{v_p(N)} \equiv \bar{x}^N+1+q\bar{x}^{N-1}x_{v_p(N)}p^{2v_p(N)}\ \left({\rm mod}\ p^{2v_p(N)+1}\right).\]
    Since $gcd(q\bar{x}^{N-1},p)=1$, if we have $\bar{x}^{N}+1\equiv 0\ \left({\rm mod}\ p^{2v_p(N)}\right)$, then there exists a unique $x_{v_p(N)}$ such that $F(x)\equiv 0\ \left({\rm mod}\ p^{2v_p(N)+1}\right)$.\par
    For equation $\bar{x}^{N}+1\equiv 0\ \left({\rm mod}\ p^{2v_p(N)}\right)$, repeat the arguments above. Since $x_0^N+1\equiv 0\ ({\rm mod}\ p)$, in the sense of modulo $p^{v_p(N)+1}$ congruence, we have a unique $\omega_0$ such that $F(\omega_0)\equiv 0\ \left({\rm mod}\ p^{2v_p(N)+1}\right)$.\par
    Since
    \[v_p(F^{\prime\prime}(\omega_0))-v_p(2!)=v_p(N)+v_p(N-1)=v_p(N)<v_p(N)+1=s-v_p(F^{\prime}(\omega_0)),\]
    and 
    \[2v_p(F^{\prime}(\omega_0))-v_p(F^{\prime\prime}(\omega_0))=v_p(N)<s-v_p(2),\]
    we obtain \eqref{phl1} and \eqref{phl2}. Then, there exists a unique $\omega$ with $\omega\equiv \omega_0\ \left({\rm mod}\ p^{v_p(N)+1}\right)$. Hence, we have $\ell=gcd(p-1,q)$.
  \end{enumerate}
  Finally, we prove $\mathcal{K}_{f_N}=\mathcal{A}_{f_N}$. It can be checked that $\mathcal{K}_{f_N}\subset\mathcal{A}_{f_N}$. On the other hand, if $x\not\in \mathcal{K}_{f_N}$, then there exists $n_0\in\mathbb{N}$ such that $f_N^{n_0}(x)\not\in X$. Hence, 
  \[\left(f_N^{n_0}(x)\right)^N+1\not\equiv 0\ ({\rm mod}\ p).\] 
  Thus, we have $\left|f_N^{n_0+1}(x)\right|_p\neq 1$, which implies $x\not\in\mathcal{A}_{f_N}$. Therefore, we have $\mathcal{A}_{f_N}\subset \mathcal{K}_{f_N}$.
\end{proof}
\begin{proposition}\label{pro3.10}
  Let $p\ge 3$. Suppose $|a|_p<1,\ gcd(p-1,q)\mid \frac{p-1}{2}$ and $|a|_p\ge |N|_p$. Then $\lim_{n\to\infty}f_N^n(x)=\infty,\quad\forall x\in\mathbb{Q}_p\cup\{\infty\}$.
\end{proposition}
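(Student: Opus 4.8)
The plan is to reduce everything to Proposition \ref{pro3.6}, which already sends every $x\notin S(0,1)$ to $\infty$ (and $\infty$ is a fixed point of $f_N$). Hence it suffices to show that every $x\in S(0,1)$ has some iterate leaving $S(0,1)$, after which Proposition \ref{pro3.6} takes over. In fact I expect this to happen after a single step, i.e.\ that $|f_N(x)|_p\neq1$ for \emph{every} $x\in S(0,1)$, which is what I will try to prove.

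I would split $S(0,1)$ according to the reduction of $x^N+1$ modulo $p$. If $x\in S(0,1)$ satisfies $x^N+1\not\equiv0\ ({\rm mod}\ p)$, then $|x^N+1|_p=1$, so $|f_N(x)|_p=|x^N+1|_p/|a|_p=1/|a|_p>1$; thus $f_N(x)\notin S(0,1)$ and we are done by Proposition \ref{pro3.6}. The only delicate points are those $x\in S(0,1)$ with $x^N\equiv-1\ ({\rm mod}\ p)$; by Lemma \ref{lem2.3} such residue classes exist precisely because $gcd(p-1,q)\mid\frac{p-1}{2}$.

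The heart of the argument is the valuation estimate: for every $x\in\mathbb{Z}_p$ with $x^N\equiv-1\ ({\rm mod}\ p)$ one has $v_p(x^N+1)\ge m+1$. Granting this, write $v=v_p(a)$; the hypothesis $|a|_p\ge|N|_p$ means exactly $v\le m$, so $v_p(x^N+1)\ge m+1>v$, whence $|f_N(x)|_p=p^{\,v-v_p(x^N+1)}\le p^{-1}<1$. Thus $f_N(x)\notin S(0,1)$, and Proposition \ref{pro3.6} closes this case too. Combining the three cases yields $\lim_{n\to\infty}f_N^n(x)=\infty$ for all $x\in\mathbb{Q}_p\cup\{\infty\}$.

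It remains to prove the estimate, which is where the arithmetic of $N=qp^m$ enters and which I regard as the main obstacle. First, Fermat's little theorem gives $x^N=x^{qp^m}\equiv x^q\ ({\rm mod}\ p)$, so $x^N\equiv-1$ forces $x^q\equiv-1\ ({\rm mod}\ p)$; in particular $x$ is a unit. Setting $z=x^q$, we have $z\equiv-1\ ({\rm mod}\ p)$ and $x^N+1=z^{p^m}+1$, so it suffices to show $v_p(z^{p^m}+1)\ge m+1$ whenever $z\equiv-1\ ({\rm mod}\ p)$. I would prove this by induction on $m$ (this is the lifting-the-exponent phenomenon). The case $m=0$ is just $z\equiv-1\ ({\rm mod}\ p)$. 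For the inductive step, set $w=z^{p^{m-1}}$, so that by induction $v_p(w+1)\ge m$ and in particular $w\equiv-1\ ({\rm mod}\ p)$; using the factorization
\[z^{p^m}+1=w^p+1=(w+1)\,C,\qquad C=\sum_{j=0}^{p-1}(-1)^j w^{\,p-1-j},\]
and observing that modulo $p$ each of the $p$ summands of $C$ equals $(-1)^j(-1)^{p-1-j}=1$ (as $p$ is odd), we get $C\equiv p\equiv0\ ({\rm mod}\ p)$, i.e.\ $v_p(C)\ge1$. Hence $v_p(z^{p^m}+1)=v_p(w+1)+v_p(C)\ge m+1$, completing the induction and the proof.
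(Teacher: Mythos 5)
Your proof is correct, and its skeleton is the same as the paper's: reduce to $S(0,1)$ via Proposition \ref{pro3.6}, show that every point of $S(0,1)$ leaves it after one iterate, and base this on the key valuation fact that $x^N\equiv -1\ ({\rm mod}\ p)$ forces $v_p(x^N+1)\ge m+1>v_p(a)$. The execution, however, genuinely differs in two places. First, the paper argues disc-by-disc: it splits $S(0,1)=\bigsqcup_{j=1}^{p-1}D(j,\frac{1}{p})$ and uses Corollary \ref{cor2.5} (hence Lemma \ref{lem2.4}) to identify $f_N\left(D(j,\frac{1}{p})\right)$ as an explicit disc centered at $\frac{j^N+1}{a}$, so that only the integer residues $j$ need to be tested; you instead estimate $|x^N+1|_p$ pointwise for each $x\in S(0,1)$, which makes Corollary \ref{cor2.5} unnecessary here. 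Second, and more substantially, the proofs of the valuation fact differ: the paper assumes $|j_0^N+1|_p=|a|_p$ for contradiction, writes $j_0=g^y$ with $g$ a generator of $\mathbb{F}_p^*$ and $qy\equiv\frac{p-1}{2}\ ({\rm mod}\ p-1)$ (structure extracted from the proof of Lemma \ref{lem2.3}), and then expands $(-1+kp)^{p^m}$ binomially, invoking Lemma \ref{lem2.4} again to control the binomial coefficients; you reduce via Frobenius to $z=x^q\equiv-1\ ({\rm mod}\ p)$ and run a lifting-the-exponent induction on $m$ through the factorization $w^p+1=(w+1)\sum_{j=0}^{p-1}(-1)^jw^{p-1-j}$, whose second factor is $\equiv p\equiv 0\ ({\rm mod}\ p)$. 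Your route is more elementary and self-contained (no generators, no appeal to the proof of Lemma \ref{lem2.3}, no binomial-coefficient estimates), it establishes the estimate for all $x\in\mathbb{Z}_p$ rather than only for residues---which is exactly what lets you drop the disc decomposition---and it even shows the hypothesis $\gcd(p-1,q)\mid\frac{p-1}{2}$ plays no role in this proposition beyond guaranteeing that your delicate case is nonempty. What the paper's route buys is economy within the paper as a whole: Lemma \ref{lem2.4} and Corollary \ref{cor2.5} are already in place and are reused across several neighboring propositions.
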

\begin{proof}
  Let $D_j:= D(j,\frac{1}{p})$. Note that $S(0,1)=\bigsqcup_{j=1}^{p-1} D_j$.
  By Proposition \ref{pro3.6}, we need only consider $x\in S(0,1)$. By Corollary \ref{cor2.5}, for all $x,y\in D_j$ we have 
  \[|f_N(x)-f_N(y)|_p=\frac{|N|_p}{|a|_p}|x-y|_p\le|x-y|_p.\]
  So, 
  \[f_N(D_j)=D\left(\frac{j^N+1}{a},\ \frac{|N|_p}{p|a|_p}\right).\]
  If $|j^N+1|_p>|a|_p$, then for all $x\in f_N(D_j)$, we have $|x|_p>1$. If $|j^N+1|_p<|a|_p$, then for all $x\in f_N(D_j)$, we have $|x|_p<1$. Hence, if $|j^N+1|_p\neq |a|_p$, then for all $x\in f_N(D_j)$, we have $\lim_{n\to\infty}f_N^n(x)=\infty$.\par
  Now, we claim that there does not exist $j\in \{1,\cdots,p-1\}$, such that $|j^N+1|_p=|a|_p$. Conversely, if there exists $j_0\in \{1,\cdots,p-1\}$ such that $|j_0^N+1|_p=|a|_p<1$. Then, $j_0^N+1\equiv 0\ ({\rm mod}\ p)$. Let $g$ be a generator of $\mathbb{F}_p^*$. By checking the proof of Lemma \ref{lem2.3}, we have $j_0=g^y$, where $y$ satisfies 
  \[qy\equiv \frac{p-1}{2}\ ({\rm mod}\ p-1).\]
  Let $qy=\frac{p-1}{2}+(p-1)t$ with $t\in\mathbb{N}$. Then,
  \[j_0^N+1=g^{qp^my}+1\equiv \left(g^{\frac{p-1}{2}}\right)^{p^m(1+2t)}+1\ ({\rm mod}\ p^{m+1}).\]
  Let $g^{\frac{p-1}{2}}=-1+kp$ with $k\in \mathbb{Z}$. Then,
  \[\left(g^{\frac{p-1}{2}}\right)^{p^m}=\left(-1+kp\right)^{p^m}=-1+kp^{m+1}+\sum\limits_{t=2}^{p^m}C_{p^m}^t\cdot (kp)^t\cdot (-1)^{p^m-t}.\]
  By Lemma \ref{lem2.4}, we have 
  \[\left|C_{p^m}^t\cdot p^t\right|_p=\left|C_{p^m}^{p^m-t}\right|_p\cdot p^{-t}\le p^{-m-1},\quad \forall\ 2\le t\le p^m.\]
  Hence,
  \[\left(g^{\frac{p-1}{2}}\right)^{p^m(1+2t)}+1\equiv (-1)^{1+2t}+1\equiv 0\ ({\rm mod}\ p^{m+1}).\]
  Therefore, $|j^N+1|_p<|N|_p\le |a|_p$, which leads to a contradiction.
\end{proof}
\begin{proof}[\textbf{Proof of Theorem \ref{sy2}}]
  Combining Propositions \ref{pro3.7}, \ref{pro3.8} and \ref{pro3.10} and Corollary \ref{cor3.9}, we complete the proof.
\end{proof}
\begin{proposition}\label{pro3.11}
  If $|a|_2=2^{-k}$ with $k\ge 1$ and $2\nmid N$, then $(\mathbb{P}^1(\mathbb{Q}_2), f_N)$ has exactly two fixed points $\infty$ and $x_0\in D(2^k-1,2^{-k-1})$. Moreover, 
    \[\lim_{n\to\infty}f_N^n(x)=\infty,\quad \forall x\neq x_0.\] 
\end{proposition}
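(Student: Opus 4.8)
The plan is to stratify $\mathbb{P}^1(\mathbb{Q}_2)$ according to the value of $|x|_2$ and to push every orbit onto the unit sphere $S(0,1)=\mathbb{Z}_2^{*}$. By Proposition~\ref{pro3.6}, every $x$ with $|x|_2\neq 1$ already satisfies $\lim_{n\to\infty}f_N^n(x)=\infty$, and $\infty$ itself is fixed since $\deg f_N\ge 2$; so only the restriction of $f_N$ to $S(0,1)$ requires work. The engine of the argument is a sharp metric identity on this sphere: for all $x,y\in S(0,1)$,
\[
  |f_N(x)-f_N(y)|_2=2^{k}\,|x-y|_2 .
\]
Indeed $f_N(x)-f_N(y)=\tfrac1a(x-y)\sum_{i=0}^{N-1}x^iy^{N-1-i}$, and since $x,y$ are odd and $N$ is odd, the sum is a sum of $N$ odd terms, hence a unit; the factor $1/a$ contributes $|a|_2^{-1}=2^{k}$. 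This plays the role for $p=2$ that Corollary~\ref{cor2.5} plays for $p\ge 3$, and must be proved by hand because that corollary is only stated for odd primes.

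Next I will settle the fixed points. The finite fixed points are exactly the zeros of $F(x)=x^N-ax+1$. A one-line estimate by the dominant term shows $F$ has no zero off $S(0,1)$: for $|x|_2>1$ the term $x^N$ dominates and $|F(x)|_2=|x|_2^N$, while for $|x|_2<1$ the constant dominates and $|F(x)|_2=1$. Hence every finite fixed point is a unit, necessarily $\equiv 1\pmod 2$. For existence and location I apply Lemma~\ref{lem2.2} at $x=1$ with $L=1$: there $|F(1)|_2=|2-a|_2\le\tfrac12<1=|F'(1)|_2^{2}$, producing a zero $x_0\in 1+2\mathbb{Z}_2$. To pin down its position I use $x_0^N+1=ax_0$; since $N$ is odd, $v_2(x_0^N+1)=v_2(x_0+1)$, whereas $v_2(ax_0)=k$, so $v_2(x_0+1)=k$. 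As any element of valuation exactly $k$ is $\equiv 2^k\pmod{2^{k+1}}$, this gives $x_0\equiv 2^k-1\pmod{2^{k+1}}$, i.e. $x_0\in D(2^k-1,2^{-k-1})$. Uniqueness then drops out of the identity: two distinct fixed points $x,x'\in S(0,1)$ would give $|x-x'|_2=|f_N(x)-f_N(x')|_2=2^k|x-x'|_2$, forcing $2^k=1$. Together with $\infty$ this yields exactly two fixed points.

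Finally I turn to the dynamics, where the crux lies. Since $|f_N'(x_0)|_2=|N|_2|x_0|_2^{N-1}/|a|_2=2^k>1$, the fixed point $x_0$ is repelling, and the global identity makes this uniform over all of $S(0,1)$. Take $x\in S(0,1)$ with $x\neq x_0$ and suppose, toward a contradiction, that $f_N^n(x)\in S(0,1)$ for every $n$. Applying the identity along the whole orbit yields $|f_N^n(x)-x_0|_2=2^{nk}|x-x_0|_2$ for all $n$; since $|x-x_0|_2>0$ and $2^k>1$, the right-hand side is unbounded, contradicting $|f_N^n(x)-x_0|_2\le 1$ (both points lie in $\mathbb{Z}_2$). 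Hence some iterate $f_N^{n_0}(x)$ must leave $S(0,1)$, after which Proposition~\ref{pro3.6} forces $\lim_{n\to\infty}f_N^n(x)=\infty$. I expect this escape-by-expansion step to be the main point: the only way to reconcile the uniform expansion factor $2^k$ with the boundedness of $S(0,1)$ is for every non-fixed orbit to exit the sphere, and once it exits, Proposition~\ref{pro3.6} finishes the job.
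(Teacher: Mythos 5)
Your proof is correct, and while it shares the paper's overall skeleton (reduce to $S(0,1)$ via Proposition~\ref{pro3.6}, an expansion factor $2^k$ on the sphere, Hensel's lemma in the form of Lemma~\ref{lem2.2} for the fixed point, escape to $\infty$), its key technical ingredients are genuinely different and yield a shorter argument. The paper proves the metric identity $|f_N(x)-f_N(y)|_2=2^{k}|x-y|_2$ only \emph{locally}, on the disks $D(\pm 1,\frac14)$, by invoking the binomial estimate of Lemma~\ref{lem2.4}; it must then stratify $S(0,1)=D(-1,2^{-k-1})\sqcup\bigsqcup_{i=1}^{k}D(2^i-1,2^{-i-1})$, use the congruence $(2^i-1)^N+1\equiv 2^i\ ({\rm mod}\ 2^{i+1})$ to show that every point outside $D_k=D(2^k-1,2^{-k-1})$ leaves the sphere after one iteration, and run the expansion argument only inside $D_k$. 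Your parity observation --- for odd $x,y$ and odd $N$, the sum $\sum_{i=0}^{N-1}x^iy^{N-1-i}$ is a sum of $N$ odd terms, hence a $2$-adic unit --- upgrades the identity to \emph{all} of $S(0,1)$ at once (and you rightly note that Corollary~\ref{cor2.5} is stated only for $p\ge 3$, so this must be proved by hand). The global identity then collapses the paper's three-step escape analysis into a single contradiction (an orbit trapped in the sphere would have unbounded distances to $x_0$ inside $\mathbb{Z}_2$), and it gives uniqueness of the finite fixed point for free, whereas the paper gets uniqueness from an induction showing $x^N+1\equiv 0\ ({\rm mod}\ 2^t)$ has the single solution $x\equiv -1\ ({\rm mod}\ 2^t)$. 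Your location argument also differs: you read $v_2(x_0+1)=k$ directly off the factorization $x_0^N+1=(x_0+1)\cdot(\text{unit})$, rather than via the paper's congruence induction. What the paper's route buys that yours does not is finer dynamical information --- the explicit valuations $|f_N(x)|_2=2^{k-i}$ on each shell $D_i$, i.e., exactly how each orbit exits the sphere --- but none of that is needed for the statement; your route buys brevity, uniformity, and independence from Lemma~\ref{lem2.4}.
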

\begin{proof}
  Let $D_i=D(2^i-1,2^{-i-1})$ with $1\le i\le k$. Then,
  \[S(0,1)=D(-1,2^{-k-1})\sqcup\left(\bigsqcup_{i=1}^kD_i\right).\]
  By Lemma \ref{lem2.4}, we have
  \[|f_N(x)-f_N(y)|_2=2^k|x-y|_2,\]
  for all $x,y\in D(1,\frac{1}{4})$ or 
  $x,y\in D(-1,\frac{1}{4})$.
  Hence,
  \[f_N(D_i)=D\left(\frac{(2^i-1)^N+1}{a},2^{k-1-i}\right)\]
  and
  \[f_N(D(-1,2^{-k-1}))=D(0,2^{-1}).\]
  Since 
  \[(2^i-1)^N+1\equiv (-1)^N+(-1)^{N-1}2^iN+1\equiv 2^i\ ({\rm mod}\ 2^{i+1}),\]
  we have 
  \[\left|\frac{(2^i-1)^N+1}{a}\right|_2=2^{k-i},\]
  which implies that
  \[|f_N(x)|_2=2^{k-i},\quad \forall x\in D_i.\]
  So, if $x\not\in D_k$, then $f_N(x)\not\in S(0,1)$. By Proposition \ref{pro3.6}, we have 
  \[\lim_{n\to\infty}f_N^n(x)=\infty,\quad\forall x\not\in D_k.\]
  \par
  Now we will prove that $f_N$ have exactly two fixed points $\infty$ and $x_0\in D_k$. We claim that for any $t\ge 1$, we have that $x^N+1\equiv 0\ ({\rm mod}\ 2^t)$ has only one solution $x\equiv -1\ ({\rm mod}\ 2^t)$. We prove it by induction on $t$. Suppose $t=1$, this is clear. Suppoes $x^N+1\equiv 0\ ({\rm mod}\ 2^s)$ has only one solution $x\equiv -1\ ({\rm mod}\ 2^s)$. Let $x=-1+2^su$. Then,
  \[x^N+1\equiv (-1+2^su)^N+1\equiv 2^su\ ({\rm mod}\ 2^{s+1}).\]
  Hence, 
  \[x^N+1\equiv 0\ ({\rm mod}\ 2^{s+1})\]
  has solution $t\equiv 0\ ({\rm mod}\ 2)$. Therefore, we have that $x^N+1\equiv 0\ ({\rm mod}\ 2^{s+1})$ has only one solution $x\equiv -1\ ({\rm mod}\ 2^{s+1})$. Let $H_N(x)=x^N-ax+1$. Since $H_N(x)\equiv x^N+1\ ({\rm mod}\ 2^k)$, we have that $H_N(x)\equiv 0\ ({\rm mod}\ 2^k)$ has only one solution $x\equiv -1\ ({\rm mod}\ 2^{k})$. So, by the same conclusion above, we have that $H_N(x)\equiv 0\ ({\rm mod}\ 2^{k+1})$ has only one solution $x\equiv 2^k-1\ ({\rm mod}\ 2^{k+1})$. Since $H_N^{\prime}(\alpha)\equiv 1\ ({\rm mod}\ 2)$ for any $\alpha\equiv 2^k-1\ ({\rm mod}\ 2^{k+1})$, we have 
  \[|H_N(\alpha)|_2\le 2^{-k-1}<1=|H_N^{\prime}(\alpha)|_2^2.\]
  So, by Lemma \ref{lem2.2}, the equation $H_N(x)=0$ has only one solution $x_0\in D_k$, which implies that $f_N$ have exactly two fixed points $\infty$ and $x_0\in D_k$. \par
  Finally, we prove that
  \[\lim_{n\to\infty}f_N^n(x)=\infty,\quad \forall x\in D_k\setminus \{x_0\}.\]
  Since $D_k=D(x_0,2^{-k-1})$, we have 
  \[|f_N(x)-x_0|_2=|f_N(x)-f_N(x_0)|_2=2^k|x-x_0|_2,\quad\forall x\in D_k\setminus \{x_0\}.\]
  Hence, for any $x\in D_k\setminus \{x_0\}$, there exists $n_0\ge 1$, such that $f_N^{n_0}(x)\not\in D(x_0,2^{-k-1})=D_k$. Therefore, 
  \[\lim_{n\to\infty}f_N^n(x)=\infty,\quad \forall x\in D_k\setminus \{x_0\}.\]
\end{proof}
\begin{proposition}\label{pro3.12}
  Suppose $|a|_2=\frac{1}{2}$ and $|N|_2\le\frac{1}{4}$. Then, $f_N$ has exactly two fixed points $\infty$ and $x_0\in S(0,1)$. Moreover,
  \[\lim_{n\to\infty}f_N^n(x)=x_0,\quad\forall x\in S(0,1).\]
\end{proposition}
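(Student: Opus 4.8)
The plan is to reduce to the sphere via Proposition~\ref{pro3.6}, show that $f_N$ maps $S(0,1)$ into itself and acts there as a strict contraction, and then invoke the (ultrametric) Banach fixed point theorem; the count of fixed points follows afterwards. First I would record the arithmetic meaning of the hypotheses: $|a|_2=\frac12$ and $|N|_2\le\frac14$ say exactly that $v_2(a)=1$ and $4\mid N$. In $\mathbb{Q}_2$ the sphere is a single ball, $S(0,1)=1+2\mathbb{Z}_2=D(1,\frac12)$, which is nonempty and compact, hence a complete metric space for $|\cdot|_2$. For invariance, take $x\in S(0,1)$, so $x$ is odd and $x^2\equiv 1\ (\mathrm{mod}\ 8)$; since $N$ is even this gives $x^N\equiv 1\ (\mathrm{mod}\ 8)$, whence $x^N+1\equiv 2\ (\mathrm{mod}\ 8)$ and $|x^N+1|_2=\frac12$. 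Therefore $|f_N(x)|_2=|x^N+1|_2/|a|_2=1$, so $f_N\big(S(0,1)\big)\subseteq S(0,1)$.

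The heart of the argument is the contraction estimate. For $x,y\in S(0,1)$ set $d=x-y$, so $|d|_2\le\frac12$ and $|y|_2=1$, and expand
\[
x^N-y^N=\sum_{k=1}^{N}C_N^k\,y^{N-k}d^{k}.
\]
The $k=1$ term has absolute value exactly $|N|_2|d|_2$. For $k\ge 2$ I would write $|C_N^k|_2|d|_2^{k}=\big(|C_N^k|_2|d|_2^{k-1}\big)|d|_2\le\big(|C_N^k|_2\,2^{1-k}\big)|d|_2\le|N|_2|d|_2$, where the last step is Lemma~\ref{lem2.4} applied with lower index $N-k$ (so that $N-(N-k)=k\ge2$). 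By the ultrametric inequality $|x^N-y^N|_2\le|N|_2|d|_2$, and hence
\[
|f_N(x)-f_N(y)|_2=\frac{|x^N-y^N|_2}{|a|_2}\le 2|N|_2\,|x-y|_2\le\tfrac12\,|x-y|_2 .
\]
Thus $f_N$ is a contraction of ratio $2|N|_2\le\frac12<1$ on the complete space $S(0,1)$, so the fixed point theorem gives a unique $x_0\in S(0,1)$ with $f_N(x_0)=x_0$, and $f_N^n(x)\to x_0$ for every $x\in S(0,1)$.

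It then remains to count fixed points. The map $\infty$ is fixed because $f_N$ is a polynomial of degree $N\ge2$. Any finite fixed point has a constant orbit, which by Proposition~\ref{pro3.6} forces it to lie in $S(0,1)$; combined with the uniqueness just obtained, this shows $f_N$ has exactly the two fixed points $\infty$ and $x_0\in S(0,1)$, completing the proof.

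The one delicate point, and the place where I expect care to be needed, is that Lemma~\ref{lem2.4} attains equality precisely when $p=2$ and $k=2$ (i.e.\ $K=N-2$). This means the $k=1$ and $k=2$ terms are of the same size when $|d|_2=\frac12$, so the two leading terms may cancel; unlike the $p\ge3$ situation of Corollary~\ref{cor2.5}, one cannot assert the exact scaling $|f_N(x)-f_N(y)|_2=\tfrac{|N|_2}{|a|_2}|x-y|_2$ on balls of radius $\frac12$. The resolution is simply to settle for the contraction inequality above, which is all that the fixed point theorem requires; this is why the argument is organized around a contraction rather than around an exact isometry-up-to-scale as in the odd-$p$ propositions.
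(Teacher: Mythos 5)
Your proof is correct, but it takes a genuinely different route from the paper's. The paper first pins down the fixed point with Hensel's lemma (Lemma \ref{lem2.2}), splitting into the cases $a\equiv 2\ ({\rm mod}\ 8)$ and $a\equiv 6\ ({\rm mod}\ 8)$ to locate $x_0\in D(1,\frac14)$ or $D(-1,\frac14)$ respectively, and then obtains convergence from the exact scaling $|f_N(x)-f_N(y)|_2=2|N|_2|x-y|_2$ valid on the two balls $D(\pm 1,\frac14)$ whose disjoint union is $S(0,1)$; shrinking the radius from $\frac12$ to $\frac14$ is exactly what restores strict inequality in Lemma \ref{lem2.4} at $K=N-2$ (the equality case you flagged), so both half-balls map into the single ball $D\left(\frac{2}{a},\frac{|N|_2}{2}\right)$ and the iterates collapse onto $x_0$. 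You instead treat $S(0,1)=D(1,\frac12)$ as a single ball, prove invariance ($x^N\equiv 1\ ({\rm mod}\ 8)$ for odd $x$ and even $N$), and replace the exact scaling by the inequality $|f_N(x)-f_N(y)|_2\le 2|N|_2|x-y|_2$, which Lemma \ref{lem2.4} still supplies; the Banach fixed point theorem then yields existence, uniqueness and convergence in one stroke, and Proposition \ref{pro3.6} rules out finite fixed points off the sphere. Your version is shorter and avoids both Hensel's lemma and the case analysis on $a\ ({\rm mod}\ 8)$; the paper's version buys the sharper information that $f_N$ is an exact similarity on each half-ball and a congruence-level localization of $x_0$, neither of which the statement itself requires. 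Your closing observation about why exact scaling cannot be asserted on balls of radius $\frac12$ (possible cancellation between the $k=1$ and $k=2$ terms, which have equal absolute value there) is precisely the right diagnosis, and settling for the contraction inequality is a sound workaround.
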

\begin{proof}
  Let $H_N\in\mathbb{Z}_p[x]$ be defined by $H_N(x)=x^N-ax+1,\ \forall x\in \mathbb{Z}_p$.
  We distinguish two subcases.
  \begin{enumerate}[1)]
    \item Suppose $a\equiv 2\ ({\rm mod}\ 8)$. Then, the equation $H_N(x)\equiv 0\ ({\rm mod}\ 8)$ has solution $\alpha\equiv 1\ ({\rm mod}\ 4)$. So, 
    \[|H_N(\alpha)|_2\le \frac{1}{8}< \frac{1}{4}=|H^{\prime}_N(\alpha)|_2^2.\]
    By Lemma \ref{lem2.2}, the equation $H_N(x)= 0$ has only one solution $x_0\in D(1,\frac{1}{4})$. Therefore, $f_N$ has exactly two fixed points $\infty$ and $x_0\in D(1,\frac{1}{4})$.
    \item Suppose $a\equiv 6\ ({\rm mod}\ 8)$. Then, by the same arguments above, we have that $f_N$ has exactly two fixed points $\infty$ and $x_0\in D(-1,\frac{1}{4})$.
  \end{enumerate}
  Since $S(0,1)=D(1,\frac{1}{4})\sqcup D(-1,\frac{1}{4})$ and 
    \[|f_N(x)-f_N(y)|_2=2|N|_2|x-y|_2 \le \frac{1}{2}|x-y|_2,\ \forall x,y \in D(\pm 1,\frac{1}{4}),\]
    we have $f_N(D(\pm 1,\frac{1}{4}))= D\left(\frac{2}{a},\frac{|N|_2}{2}\right)$,
  which implies our conclusion.
\end{proof}
\begin{proof}[\textbf{Proof of Theorem \ref{sy3}}]
  Combining Propositions \ref{pro3.6}, \ref{pro3.11} and \ref{pro3.12}, we need only consider two subcases with $x\in S(0,1)$.
  \begin{enumerate}[1)]
    \item Suppose $|a|_2\le \frac{1}{4}$ and $2\mid N$. For all $x\in D\left(\frac{2}{a},\frac{1}{4|a|_2}\right)$, we have $|x|_2\neq 1$. Noting that $f_N(D(\pm 1,\frac{1}{4}))= D(\frac{2}{a},\frac{1}{4})$, we have $\lim_{n\to\infty}f_N^n(x)=\infty,\quad\forall x\in S(0,1)$.
    \item Suppose $|a|_2= \frac{1}{2}$ and $|N|_2=\frac{1}{2}$. By Lemma \ref{lem2.4}, we have
    \[\left|\frac{C_N^{N-k}2^k}{2a}\right|_2\le 2|N|_2=1,\quad \forall 2\le k\le N.\]
    Since 
    \[\frac{|N|_2}{|a|_2}=1\ \text{and}\ \frac{|2-a|_2}{|2a|_2}\le 1,\]
    we have that the polynomial $h_N$ defined by \eqref{hn} is of coefficients in $\mathbb{Z}_p$.
  \end{enumerate} 
  \par
    By defining $\pi(x)=\frac{x-1}{2}$ as a conjugacy, we conclude that the subsystem $\left(S(0,1), f_N\right)$ is topologically conjugate to $(\mathbb{Z}_2, h_N)$.
\end{proof}
\subsection{Case $|a|_p=1$}
\begin{proof}[\textbf{Proof of Theorem \ref{sy4}}]
  The proof is similar to that of Proposition \ref{pro3.6}. 
\end{proof}
\section{Three $p$-adic polynomial dynamical systems}
First, we discuss the $p$-adic polynomial dynamical system $(S(0,1),g_N)$.
\begin{proposition}\label{pro4.1}
  Suppose $|a|_p>1$ and $(N-1)\mid v_p(a)$. Let $x\in\mathbb{Q}_p$, such that $|x|_p^{N-1}=|a|_p=p^{(N-1)s}$ and $D_j:=D(jp^{-s},p^{s-1}),\ 1\le j\le p-1$. If $x\in D_{j_0}$, then $f_N(x)\in D_{j_1}$ with $a_0j_1\equiv j_0^N\ ({\rm mod}\ p)$.
\end{proposition}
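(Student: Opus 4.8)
The plan is to compute $f_N(x)=\frac{x^N+1}{a}$ directly, tracking $p$-adic valuations and leading digits, since the whole statement reduces to a single reduction modulo $p$. First I would record the arithmetic forced by the hypotheses. Because $(N-1)\mid v_p(a)$ and $|a|_p=p^{(N-1)s}$, the number $s$ is a positive integer and $v_p(a)=-(N-1)s$; writing $a=p^{-(N-1)s}w$ gives $|w|_p=1$ and $w\equiv a_0\pmod p$. The condition $|x|_p^{N-1}=|a|_p$ forces $|x|_p=p^s$, so I can write $x=p^{-s}u$ with $|u|_p=1$. The key identification here is that $D_{j_0}=D(j_0p^{-s},p^{s-1})$ is exactly the set of points of absolute value $p^s$ whose leading $p$-adic digit equals $j_0$: since the radius $p^{s-1}=p^s/p$ is precisely the one for which $S(0,p^s)=\bigsqcup_{j=1}^{p-1}D_j$, membership $x\in D_{j_0}$ is equivalent to $u\equiv j_0\pmod p$.

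Next I would carry out the computation. From $x=p^{-s}u$ one gets $x^N=p^{-Ns}u^N$, hence $x^N+1=p^{-Ns}(u^N+p^{Ns})$, and dividing by $a=p^{-(N-1)s}w$ collapses the powers of $p$:
\[
f_N(x)=\frac{x^N+1}{a}=p^{-s}\cdot\frac{u^N+p^{Ns}}{w}.
\]
Since $N\ge 2$ and $s\ge 1$, the term $p^{Ns}$ is negligible modulo $p$, so $u^N+p^{Ns}\equiv u^N\equiv j_0^N\pmod p$, while $w\equiv a_0\pmod p$. Thus $f_N(x)=p^{-s}v$ where $v$ is a unit with $v\equiv j_0^N a_0^{-1}\pmod p$; in particular $|f_N(x)|_p=p^s$, so $f_N(x)\in S(0,p^s)$, and its leading digit $j_1$ satisfies $j_1\equiv j_0^N a_0^{-1}\pmod p$, i.e.\ $a_0 j_1\equiv j_0^N\pmod p$, which is the assertion.

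This argument is essentially a direct calculation, so I do not expect a single deep obstacle; the only points requiring care are the valuation bookkeeping. Specifically, I would want to verify cleanly that $D_{j_0}$ encodes the leading digit (equivalently $u\equiv j_0\pmod p$) and that neither the $+1$ in the numerator $x^N+1$ nor the resulting $+p^{Ns}$ after clearing denominators disturbs the residue class mod $p$ of the relevant unit. Once these are in place, the conclusion follows from the single reduction of $\frac{u^N+p^{Ns}}{w}$ modulo $p$.
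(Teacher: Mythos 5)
Your proof is correct, but it follows a different route from the paper's. The paper disposes of $p=2$ as a trivial separate case and, for $p\ge 3$, invokes Corollary \ref{cor2.5} (which rests on the binomial estimate of Lemma \ref{lem2.4}) to get $|f_N(x)-f_N(y)|_p=|N|_p\,|x-y|_p$ for $x,y\in D_{j_0}$; this shows that $f_N(D_{j_0})$ is a single disk of radius $p^{s-1}|N|_p$ contained in $S(0,p^s)$, hence sits inside exactly one $D_{j_1}$, and then $j_1$ is identified by essentially the same congruence computation you perform, namely that $\bigl|j_1p^{-s}-\frac{(j_0p^{-s})^N+1}{a}\bigr|_p\le p^{s-1}$ forces $a_0j_1\equiv j_0^N\ ({\rm mod}\ p)$. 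You replace that Lipschitz/disk-image step by the pointwise unit factorization $x=p^{-s}u$, $a=p^{-(N-1)s}w$, giving $f_N(x)=p^{-s}\,\frac{u^N+p^{Ns}}{w}$, whose leading digit is visibly $j_0^Na_0^{-1}\ ({\rm mod}\ p)$. This is more elementary: it needs neither Lemma \ref{lem2.4} nor Corollary \ref{cor2.5}, and it treats $p=2$ uniformly instead of as a separate case, since the identification of $D_j$ with the leading-digit class and the reduction of $\frac{u^N+p^{Ns}}{w}$ modulo $p$ are valid for every prime. What the paper's argument buys in exchange is the stronger structural conclusion that $f_N(D_{j_0})$ is itself a disk of radius $p^{s-1}|N|_p$ (an isometry-or-contraction picture consistent with the machinery used throughout Section 4), whereas your pointwise computation yields only the containment $f_N(D_{j_0})\subset D_{j_1}$ — which is, however, all that the proposition asserts and all that its later applications (e.g.\ Proposition \ref{pro4.3}) require.
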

\begin{proof}
  If $p=2$, then the proposition holds obviously. If $p\ge 3$, then by Corollary \ref{cor2.5}, for all $x,y \in D_{j_0}$, we have
  \[|f_N(x)-f_N(y)|_p=|N|_p|x-y|_p\le |x-y|_p.\]
  Hence,
  \[f_N(D_{j_0})=D\left(\frac{(j_0p^{-s})^N+1}{a},\ p^{s-1}|N|_p\right)\subset S(0,p^s).\]
  Since $|N|_p\le 1$ and $S(0,p^s)=\bigsqcup_{j=1}^{p-1}D_j$, there exists exactly one $j_1$ with $f_N(D_{j_0})\subset D_{j_1}$.\\
  Since
  \[\left|j_1p^{-s}-\frac{(j_0p^{-s})^N+1}{a}\right|_p=
  |(j_0p^{-s})^N-j_1ap^{-s}+1|_p\cdot p^{(1-N)s}\le p^{s-1},\]
  we have
  \[p^{Ns}\left((j_0p^{-s})^N-j_1ap^{-s}+1\right) \equiv 0\ ({\rm mod}\ p).\]
  A simple computation gives $a_0j_1\equiv j_0^N\ ({\rm mod}\ p)$ as required.
\end{proof}
\begin{proposition}\label{pro4.2}
  Suppose $|a|_2>1$ and $(N-1)\mid v_2(a)$. Define $g_N$ as \eqref{dyn1}. Then, the whole dynamical system $(S(0,1),g_N)$ can never be minimal.
\end{proposition}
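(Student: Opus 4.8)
The plan is to reduce minimality to finitely many congruence levels via Theorem~\ref{mini} and then pin down a single level at which the induced map on $S(0,1)/2^{n}\mathbb{Z}_2$ cannot be one full cycle. The first step is to write $g_N$ explicitly. Since $|a|_2>1$ and $(N-1)\mid v_2(a)$, I would set $v_2(a)=-(N-1)s$ with $s\ge 1$ and $a=2^{-(N-1)s}a'$ for an odd unit $a'$. Then the leading coefficient $\tfrac{1}{a|a|_2}$ is exactly the unit $u:=1/a'$, and the constant term $\tfrac{1}{a}|a|_2^{1/(N-1)}$ equals $u\,2^{Ns}$, so that $g_N(x)=u\bigl(x^{N}+2^{Ns}\bigr)$ with $u\equiv 1\ (\mathrm{mod}\ 2)$. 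In particular $S(0,1)=1+2\mathbb{Z}_2$ is $g_N$-invariant, and because $S(0,1)/2^{n}\mathbb{Z}_2$ carries $2^{\,n-1}$ classes, minimality at level $n$ would force $(g_N)_{(n)}$ to be a single cycle of length $2^{\,n-1}$.

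The decisive computation is at level $3$, i.e.\ modulo $8$, and rests on the elementary fact that every odd $x$ satisfies $x^{2}\equiv 1\ (\mathrm{mod}\ 8)$. If $N$ is odd then $N\ge 3$, hence $Ns\ge 3$ and $u\,2^{Ns}\equiv 0\ (\mathrm{mod}\ 8)$, so $g_N(x)\equiv u\,x^{N}\equiv u\,x\ (\mathrm{mod}\ 8)$ on $\{1,3,5,7\}$; thus $(g_N)_{(3)}$ is multiplication by the unit $u$ on $(\mathbb{Z}/8\mathbb{Z})^{*}$, and since this group has exponent $2$ we get $u^{2}\equiv 1\ (\mathrm{mod}\ 8)$, so the permutation has order at most $2$ and is not a $4$-cycle. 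If instead $N$ is even, then $x^{N}\equiv 1\ (\mathrm{mod}\ 8)$ for odd $x$, so $g_N(x)\equiv u(1+2^{Ns})\ (\mathrm{mod}\ 8)$ is constant on the four classes and $(g_N)_{(3)}$ is not even a bijection. In either case level $3$ is non-minimal, and Theorem~\ref{mini} then yields that $(S(0,1),g_N)$ is not minimal. For the even case I note the cleaner alternative that $\alpha_1=g_N'(1)=uN\equiv 0\ (\mathrm{mod}\ 2)$, so the level-$1$ fixed cycle grows tails and Theorem~\ref{grta} produces an attracting fixed point whose basin is all of $S(0,1)$.

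The main obstacle I expect is conceptual rather than computational: $p=2$ is genuinely exceptional, and the lifting dichotomy for growing cycles in Proposition~\ref{dt2} is stated only for $p\ge 3$, so I cannot argue minimality or its failure by tracking whether a growing cycle propagates to higher levels. The way around this is precisely the explicit reduction modulo $8$ above, whose real content is the structural fact that $(\mathbb{Z}/8\mathbb{Z})^{*}\cong(\mathbb{Z}/2\mathbb{Z})^{2}$ contains no element of order $4$, which is exactly what forbids the $4$-cycle needed at level $3$. The only points requiring care are that $s\ge 1$ (so $a$ is non-integral) and that the constant term has $2$-valuation $Ns\ge 3$ in the odd case, so that it truly vanishes modulo $8$, together with the routine verification that $g_N$ preserves $S(0,1)$.
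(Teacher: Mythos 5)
Your proof is correct, but it takes a genuinely different route from the paper. The paper argues inside the Fan--Liao minimal-decomposition formalism: assuming minimality, the unique cycle at each level must grow, which at level $1$ forces $\alpha_1(1)\equiv 1\ ({\rm mod}\ 2)$ and $\beta_1(1)\not\equiv 0\ ({\rm mod}\ 2)$, hence $N$ odd and $a_1=1$; it then computes $\beta_2(1)$ and shows these constraints force $\beta_2(1)\equiv 0\ ({\rm mod}\ 2)$, i.e.\ the length-$2$ cycle at level $2$ splits --- a contradiction. You instead normalize $g_N(x)=u\left(x^N+2^{Ns}\right)$ with $u$ an odd unit and go straight to level $3$: for $N$ odd (so $Ns\ge 3$) the induced map on $\{1,3,5,7\}$ is multiplication by $u$, which cannot be a $4$-cycle because $(\mathbb{Z}/8\mathbb{Z})^{*}$ has exponent $2$; for $N$ even it is constant (or, as you note, the level-$1$ cycle grows tails). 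Both arguments are sound applications of Theorem \ref{mini}; yours is more elementary and self-contained, isolating the structural obstruction ($(\mathbb{Z}/8\mathbb{Z})^{*}$ contains no element of order $4$) and dispatching both parities of $N$ at once, whereas the paper's $\alpha_n/\beta_n$ computation stays within the machinery it reuses later --- the proof of Theorem \ref{dsy1} explicitly recycles "the same arguments in the proof of Proposition \ref{pro4.2}" to characterize minimality for $p\ge 3$, which your mod-$8$ shortcut would not directly generalize to.
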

\begin{proof}
  Conversely, if $(S(0,1),g_N)$ is minimal, then by Theorem \ref{mini}, we have that $\sigma$ is a cycle of length $2^{n-1}$ at level $n$ for each $n\ge 1$.
  %the whole dynamical system $(\mathbb{Z}_2^*,g_N)$ is minimal if and only if $\sigma$ is a cycle of length $2^{n-1}$ at level $n$ for each $n\ge 1$. 
  Hence, we have 
  \begin{align}\label{ty1}
    \alpha_n(1)\equiv 1\ ({\rm mod}\ 2)\quad \text{and}\quad \beta_n(1)\not\equiv 0\ ({\rm mod}\ 2).
  \end{align}
  Let $|a|_2=2^{(N-1)s}$ with some $s\ge 1$.
  Since $\sigma$ is a cycle of length $1$ at level $1$, we have
  \[|\alpha_1(1)-1|_2=|N-2^{(N-1)s}a|_2,\]
  \[|\beta_1(1)|_2=2\times|1-2^{(N-1)s}a+2^{Ns}|_2.\]
  Since \eqref{ty1} holds for $n=1$, we have $N\equiv 1\ ({\rm mod}\ 2)$ and $a_1=1$.\par
  Since $\sigma$ is a cycle of length $2$ at level $2$, we have
  \[|\beta_2(1)|_2=4\times|(1+2^{Ns})^N+2^{N^2s}a^N-2^{(N^2-1)s}a^{N+1}|_2.\]
  Since we have $N\equiv 1\ ({\rm mod}\ 2)$ and $a_1=1$, it can be checked that
  \[(1+2^{Ns})^N+2^{N^2s}a^N-2^{(N^2-1)s}a^{N+1}\equiv 0\ ({\rm mod}\ 8),\]
  which implies $\beta_2(1)\equiv 0\ ({\rm mod}\ 2).$ This leads to a contradiction.
\end{proof}
Now, we provide a necessary and sufficient condition under which $\sigma$ is a cycle of length $p-1$ at level $1$.
\begin{proposition}\label{pro4.3}
  Let $p\ge 3$ and let $g$ be a generator of $\mathbb{F}_p^*$. Then, $\sigma$ is a cycle of length $p-1$ at level $1$ if and only if \eqref{bcy} holds.
\end{proposition}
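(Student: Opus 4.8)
The plan is to reduce the cycle condition at level $1$ to a statement about an affine map on $\mathbb{Z}/(p-1)\mathbb{Z}$, and then read off \eqref{bcy} as the return-time condition for the base point of that map. The starting point is to compute the reduction of $g_N$ modulo $p$. Since $(N-1)\mid v_p(a)$ and $|a|_p>1$, I may write $|a|_p=p^{(N-1)s}$ with $s\ge 1$ and $a=p^{-(N-1)s}(a_0+a_1p+\cdots)$. Then $v_p\!\left(\tfrac{1}{a|a|_p}\right)=0$ with $\tfrac{1}{a|a|_p}\equiv a_0^{-1}\pmod p$, while $v_p\!\left(\tfrac{1}{a}|a|_p^{1/(N-1)}\right)=Ns\ge 1$. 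Hence, on $S(0,1)$ (whose reduction modulo $p$ is $\mathbb{F}_p^\ast$), the induced map satisfies $g_{N,(1)}(x)\equiv a_0^{-1}x^N\pmod p$. Because $\mathbb{F}_p^\ast$ has exactly $p-1$ elements, the assertion that $\sigma$ (the cycle through $1$) has length $p-1$ is exactly the assertion that $g_{N,(1)}$ is a single $(p-1)$-cycle on $\mathbb{F}_p^\ast$.

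Next I would pass to exponent coordinates via the isomorphism $\mathbb{Z}/(p-1)\mathbb{Z}\to\mathbb{F}_p^\ast$, $k\mapsto g^k$. Using the defining relation $|a|_p\cdot a\,g^m\equiv 1\pmod p$, i.e. $a_0g^m\equiv 1$ and hence $a_0^{-1}\equiv g^m$, I compute $a_0^{-1}(g^k)^N=g^{Nk+m}$. Therefore $g_{N,(1)}$ is conjugate to the affine map $\psi(k)=Nk+m\ (\mathrm{mod}\ p-1)$ on $\mathbb{Z}/(p-1)\mathbb{Z}$, and the orbit of $1=g^0$ corresponds to the orbit of $0$ under $\psi$. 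A direct induction gives $\psi^n(0)=m\cdot\frac{N^n-1}{N-1}\ (\mathrm{mod}\ p-1)$.

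Finally I would identify the length of $\sigma$ with the return time of $0$ under $\psi$. That length is the least $n\ge 1$ with $\psi^n(0)=0$, which is precisely $\min\{n\ge 1:\frac{N^n-1}{N-1}m\equiv 0\ (\mathrm{mod}\ p-1)\}$. The equivalence then follows from the key observation that this return time equals $p-1$ if and only if $\psi$ is a single $(p-1)$-cycle: if the return time is $p-1$ then $0,\psi(0),\dots,\psi^{p-2}(0)$ are $p-1$ distinct elements and hence exhaust $\mathbb{Z}/(p-1)\mathbb{Z}$, so $\psi$ is a full cycle; conversely a single $(p-1)$-cycle has return time $p-1$. This observation is the only step requiring care, and it is exactly what makes the argument clean: a return time of $p-1$ automatically forces $\psi$ (equivalently $g_{N,(1)}$) to be a permutation consisting of one full cycle, so I need not separately impose that $x\mapsto x^N$ be a bijection of $\mathbb{F}_p^\ast$. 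Putting the three reductions together yields the stated equivalence with \eqref{bcy}.
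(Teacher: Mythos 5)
Your argument is correct and is essentially the paper's own proof: both identify the level-$1$ dynamics with $x\mapsto a_0^{-1}x^N$ on $\mathbb{F}_p^*$, pass to discrete-logarithm coordinates where the map becomes $k\mapsto Nk+m \pmod{p-1}$, and equate the length of $\sigma$ with the first return time of the orbit $m\frac{N^n-1}{N-1}$ of $0$, i.e.\ with \eqref{bcy}. The only differences are presentational: the paper obtains the mod-$p$ reduction by pushing Proposition \ref{pro4.1} through the conjugacy of Theorem \ref{sy1} rather than reducing the coefficients of $g_N$ directly as you do, and it leaves implicit the observation you spell out explicitly (and correctly), namely that a return time of $p-1$ at a single point already forces the induced map to be one full $(p-1)$-cycle.
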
  
\begin{proof}
  Recall that $\pi: x\mapsto |a|_p^{N-1}x$ is a conjugacy from $\left(S\left(0,|a|_p^{\frac{1}{N-1}}\right), f_N\right)$ to $(S(0,1), g_N)$. Define $D_j$ as in the Proposition \ref{pro4.1}. Since $\pi(D_j)= j+p\mathbb{Z}_p$, by combining Theorem \ref{sy1} and Proposition \ref{pro4.1}, we have that $\sigma$ is a cycle of length $p-1$ at level $1$ if and only if
  \begin{align}\label{43}
    \min\left\{n\ge 1: f_N^n(D_1)\subset D_1\right\}=p-1.
  \end{align} 
  Since $a_0g^{m}\equiv 1\ ({\rm mod}\ p)$, we deduce $j_1\equiv g^mj_0^N\ ({\rm mod}\ p)$ from $a_0j_1\equiv j_0^N\ ({\rm mod}\ p)$.
  Hence,
  \[j_n\equiv g^{\frac{N^n-1}{N-1}m}j_0^{N^n}\ ({\rm mod}\ p).\]
  Since $g$ is a generator of $\mathbb{F}_p^*$, we have \eqref{43} holds if and only if \eqref{bcy} holds.
\end{proof}
\begin{proof}[\textbf{Proof of Theorem \ref{dsy1}}]
  The case $p=2$ has been proven in Proposition \ref{pro4.2}. By the same arguments in the proof of Proposition \ref{pro4.2}, we have that the whole system is minimal if and only if $\sigma$ is a cycle of length $(p-1)p^{n-1}$ at level $n$ for each $n\ge 1$. By Proposition \ref{pro4.3}, we have that $\sigma$ is a cycle of length $p-1$ at level $1$ if and only if \eqref{bcy} holds. By Proposition \ref{dt2}, we need only check that if $p=3$, then \eqref{bcy} holds if and only if $N\equiv 1\ ({\rm mod}\ 2)$ and $a_0=2$.
  \par
  If \eqref{bcy} holds, then $m\equiv 1\ ({\rm mod}\ 2)$ and $(N+1)\cdot m \equiv 0\ ({\rm mod}\ 2)$.
  Hence,
  $N\equiv 1\ ({\rm mod}\ 2)$ and $2^1a_0\equiv 1\ ({\rm mod}\ 3)$.
  The only if part follows by elementary computation. 
\end{proof}
Now, we turn to consider the dynamical system $(\mathbb{Z}_p,f_N)$ with $|a|_p=1$. \par
\begin{proof}[\textbf{Proof of Theorem \ref{dsy2}}]
  Since 
  $|f_N(0)-1|_2=|a-1|_2\le \frac{1}{2}$ and $|f_N(1)|_2=\frac{1}{2}$,
  we have that $\sigma$ is a cycle of length $2$ at level $1$.\par
  Noting that $\alpha_1(0)=0$, we have $\sigma$ grows tails at level $1$. By Theorem \ref{grta}, we complete the proof.
\end{proof}
\begin{proposition}\label{pro4.4}
  Suppose $a\equiv 1\ ({\rm mod}\ 3)$. Then, we have $f_N(3\mathbb{Z}_3)\subset f_N(1+3\mathbb{Z}_3)$ and $ f_N(1+3\mathbb{Z}_3)\subset f_N(2+3\mathbb{Z}_3)$. Moreover, the compact open set $2+3\mathbb{Z}_3$ is an $f_N$-invariant set if and only if $2\mid N$.
\end{proposition}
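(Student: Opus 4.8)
The plan is to analyze $f_N$ modulo $3$, that is, to compute the map induced by $f_N$ on $\mathbb{Z}/3\mathbb{Z}$ and read off where each of the three residue classes $3\mathbb{Z}_3$, $1+3\mathbb{Z}_3$, $2+3\mathbb{Z}_3$ is sent. The decisive observation is that for $x\in\mathbb{Z}_3$ the value $f_N(x)\bmod 3$ depends only on $x\bmod 3$: since $a\equiv 1\ ({\rm mod}\ 3)$, $a$ is a unit of $\mathbb{Z}_3$ with $a^{-1}\equiv 1\ ({\rm mod}\ 3)$, and $x^N\bmod 3$ is determined by $x\bmod 3$, so that $f_N(x)=(x^N+1)a^{-1}\equiv x^N+1\ ({\rm mod}\ 3)$. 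In particular $f_N$ maps $\mathbb{Z}_3$ into $\mathbb{Z}_3$ here, because $|a|_3=1$ forces $a^{-1}\in\mathbb{Z}_3$, so the reduction makes sense.

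First I would evaluate this reduced map on each class. For $x\equiv 0\ ({\rm mod}\ 3)$ one gets $f_N(x)\equiv 0^N+1\equiv 1\ ({\rm mod}\ 3)$, hence $f_N(3\mathbb{Z}_3)\subset 1+3\mathbb{Z}_3$. For $x\equiv 1\ ({\rm mod}\ 3)$ one gets $f_N(x)\equiv 1+1\equiv 2\ ({\rm mod}\ 3)$, hence $f_N(1+3\mathbb{Z}_3)\subset 2+3\mathbb{Z}_3$. These are the first two asserted containments, and both hold for every $N\ge 2$, using only $a\equiv 1\ ({\rm mod}\ 3)$.

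For the third class I would use $2\equiv -1\ ({\rm mod}\ 3)$, so that $f_N(x)\equiv (-1)^N+1\ ({\rm mod}\ 3)$ whenever $x\equiv 2\ ({\rm mod}\ 3)$. Consequently $f_N(2+3\mathbb{Z}_3)\subset 2+3\mathbb{Z}_3$ holds exactly when $(-1)^N+1\equiv 2\ ({\rm mod}\ 3)$, i.e.\ when $(-1)^N=1$, which occurs if and only if $N$ is even; when $N$ is odd the image instead lies in $3\mathbb{Z}_3$, which is disjoint from $2+3\mathbb{Z}_3$, so invariance fails. Recalling the paper's convention that $E$ is invariant when $f_N(E)\subset E$, this establishes the ``moreover'' equivalence: $2+3\mathbb{Z}_3$ is $f_N$-invariant if and only if $2\mid N$.

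The argument is entirely elementary, so there is no genuine analytic obstacle; the only points demanding care are to record that the hypothesis $a\equiv 1\ ({\rm mod}\ 3)$ is precisely what removes $a^{-1}$ from the reduction, so that the reduced dynamics is governed by $x\mapsto x^N+1$ alone, and to track the parity dichotomy $(-1)^N$ that yields the stated $2\mid N$ criterion.
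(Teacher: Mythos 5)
Your proof is correct and takes essentially the same approach as the paper's: reduce $f_N$ modulo $3$ (legitimate since $a\equiv 1\ ({\rm mod}\ 3)$ makes $a$ a unit with $a^{-1}\equiv 1$), evaluate the reduced map $x\mapsto x^N+1$ on the three residue classes, and let the parity of $N$ decide invariance of $2+3\mathbb{Z}_3$. Like the paper's own proof, you actually establish $f_N(3\mathbb{Z}_3)\subset 1+3\mathbb{Z}_3$ and $f_N(1+3\mathbb{Z}_3)\subset 2+3\mathbb{Z}_3$, which is clearly the intended reading of the statement (the occurrences of $f_N$ on the right-hand sides of the stated inclusions are typos in the paper).
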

\begin{proof}
  Since $|f_N(0)-1|_3=|a-1|_3$ and $|f_N(1)-2|_3=|2a-2|_3$, we have $f_N(3\mathbb{Z}_3)\subset f_N(1+3\mathbb{Z}_3)$ and $ f_N(1+3\mathbb{Z}_3)\subset f_N(2+3\mathbb{Z}_3)$.\par
  Since $|f_N(2)-2|_3=|2^N+1-2a|_3$ and $2^N+1-2a\equiv (-1)^N-1\ ({\rm mod}\ 3)$, we have $2+3\mathbb{Z}_3$ is an $f_N$-invariant set if and only if $2\mid N$.
\end{proof}
\begin{proposition}\label{pro4.5}
  Suppose $a\equiv 1\ ({\rm mod}\ 3)$ and $6\mid N$. Then $f_N$ admits a fixed point $x_0$ in $2+3\mathbb{Z}_3$, and $2+3\mathbb{Z}_3$ is contained in the attracting basin of $x_0$. 
\end{proposition}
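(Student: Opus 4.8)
The plan is to recognize $2+3\mathbb{Z}_3$ as the fattening of a single growing-tails cycle at level $1$ and then invoke Theorem \ref{grta} directly. Since $a\equiv 1\ ({\rm mod}\ 3)$ we have $|a|_3=1$, so $1/a\in\mathbb{Z}_3^*$ and $f_N(x)=\tfrac{1}{a}(x^N+1)\in\mathbb{Z}_3[x]$; thus the polynomial machinery of Section~2 applies to the invariant set $2+3\mathbb{Z}_3$ furnished by Proposition \ref{pro4.4} (using $6\mid N$, in particular $2\mid N$).

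First I would check that $\sigma=(2)$ is a cycle of $f_{N,(1)}$ of length $1$ at level $1$, i.e. that $2$ is a fixed point modulo $3$. Because $6\mid N$ forces $N$ even, we have $2^N\equiv (-1)^N\equiv 1\ ({\rm mod}\ 3)$, hence $2^N+1\equiv 2\ ({\rm mod}\ 3)$; combined with $a\equiv 1\ ({\rm mod}\ 3)$ this gives $f_N(2)=(2^N+1)/a\equiv 2\ ({\rm mod}\ 3)$, so indeed $f_{N,(1)}(2)=2$.

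Next I would compute the growth datum $\alpha_1(2)=f_N'(2)$ associated to this length-one cycle. Differentiating gives $f_N'(x)=Nx^{N-1}/a$, so that $v_3(f_N'(2))=v_3(N)+(N-1)v_3(2)-v_3(a)=v_3(N)\ge 1$, using $3\mid N$ together with $v_3(2)=v_3(a)=0$. Hence $\alpha_1(2)\equiv 0\ ({\rm mod}\ 3)$, which is exactly case~(c) of the classification: the cycle $\sigma=(2)$ \emph{grows tails} at level $1$.

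Finally, applying Theorem \ref{grta} with $k=1$, $n=1$ and $x_1=2$ yields a fixed point $x_0$ in $X=2+3\mathbb{Z}_3$ together with the statement that $X$ lies in the attracting basin of $x_0$, which is precisely the claim. I do not expect a genuine obstacle here: once the level-one cycle is seen to grow tails, Theorem \ref{grta} does all the work, and the only point requiring care is the valuation computation $v_3(f_N'(2))=v_3(N)$, where one must use $6\mid N$ (rather than merely $2\mid N$) to guarantee $3\mid N$ and hence $\alpha_1(2)\equiv 0\ ({\rm mod}\ 3)$.
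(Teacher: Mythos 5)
Your proof is correct and follows essentially the same route as the paper: identify $(2)$ as a length-one cycle at level $1$ (the paper cites Proposition \ref{pro4.4} for this, you verify it directly, which amounts to the same computation), observe that $\alpha_1(2)=N\cdot 2^{N-1}/a\equiv 0\ ({\rm mod}\ 3)$ because $3\mid N$, conclude the cycle grows tails, and invoke Theorem \ref{grta}. Your remark that $6\mid N$ is needed precisely to get $3\mid N$ (invariance only uses $2\mid N$) is exactly the point the paper's argument relies on as well.
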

\begin{proof}
  By Proposition \ref{pro4.4}, we have $\sigma$ is a cycle of length $1$ at level $1$. Since $|\alpha_1(2)|_3=|N\cdot 2^{N-1}|_3\le \frac{1}{3}$, we have $\alpha_1(2)\equiv 0\ ({\rm mod}\ 3)$. So $\sigma$ grows tails at level $1$. By Theorem \ref{grta}, we complete the proof.
\end{proof}
\begin{proposition}\label{pro4.6}
  Suppose $a\equiv 1,4\ ({\rm mod}\ 9)$ and $N\equiv 2\ ({\rm mod}\ 6)$. Then, $\sigma$ grows at level $1$. Moreover, if $a\equiv 4\ ({\rm mod}\ 9)$, then $(2+3\mathbb{Z}_3)$ is a minimal set.
\end{proposition}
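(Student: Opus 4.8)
The plan is to reduce both assertions to the cycle-lifting machinery of Propositions \ref{dt1} and \ref{dt2}, after first identifying the relevant level-$1$ cycle. Since $N\equiv 2\ ({\rm mod}\ 6)$ forces $2\mid N$, Proposition \ref{pro4.4} shows that $2+3\mathbb{Z}_3$ is $f_N$-invariant; as $f_N(2)\equiv 2\ ({\rm mod}\ 3)$, the singleton $\sigma=(2)$ is a cycle of $f_{N,(1)}$ of length $1$ at level $1$. To prove that $\sigma$ grows I would verify the two defining conditions $\alpha_1(2)\equiv 1$ and $\beta_1(2)\not\equiv 0\ ({\rm mod}\ 3)$ by direct computation. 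For the multiplier, $\alpha_1(2)=f_N'(2)=\frac{N\cdot 2^{N-1}}{a}$; using $N\equiv 2\ ({\rm mod}\ 3)$, the parity $2^{N-1}\equiv(-1)^{N-1}\equiv 2\ ({\rm mod}\ 3)$, and $a\equiv 1\ ({\rm mod}\ 3)$, one gets $\alpha_1(2)\equiv 2\cdot 2\equiv 1\ ({\rm mod}\ 3)$.

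For the displacement condition I would compute $\beta_1(2)=\frac{f_N(2)-2}{3}=\frac{2^N+1-2a}{3a}$ modulo $3$, which reduces to pinning down $2^N+1-2a$ modulo $9$. The key arithmetic input is that $2$ has multiplicative order $6$ modulo $9$, so $N\equiv 2\ ({\rm mod}\ 6)$ gives $2^N\equiv 4\ ({\rm mod}\ 9)$. Then for $a\equiv 1\ ({\rm mod}\ 9)$ one finds $2^N+1-2a\equiv 3\ ({\rm mod}\ 9)$, while for $a\equiv 4\ ({\rm mod}\ 9)$ one finds $2^N+1-2a\equiv 6\ ({\rm mod}\ 9)$; in both cases the $3$-valuation is exactly $1$, whence $\beta_1(2)\not\equiv 0\ ({\rm mod}\ 3)$ and $\sigma$ grows at level $1$.

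For the minimality of $2+3\mathbb{Z}_3$ under the stronger hypothesis $a\equiv 4\ ({\rm mod}\ 9)$, I would invoke Theorem \ref{mini}: it suffices that for every $n\ge 1$ the induced map $f_{N,(n)}$ be minimal on $(2+3\mathbb{Z}_3)/3^n\mathbb{Z}_3$, equivalently that $\sigma$ lift to a single growing cycle of length $3^{n-1}$ at each level $n$. A growing cycle lifts to a unique cycle of triple the length, so the whole argument collapses to checking the level-$1$-to-level-$2$ lift and then propagating. By Proposition \ref{dt2}(3), for $p=3$ and $n=1$ the lift of a growing cycle grows precisely when $\beta_1(2)\not\equiv \frac{g''(2)}{2}\ ({\rm mod}\ 3)$, where $g=f_N$. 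I would compute $\frac{g''(2)}{2}=\frac{N(N-1)2^{N-2}}{2a}$ modulo $3$: from $N(N-1)\equiv 2$, $2^{N-2}\equiv 1$ (since $N-2\equiv 0\ ({\rm mod}\ 6)$ is even), $2^{-1}\equiv 2$, and $a^{-1}\equiv 1$, this gives $\frac{g''(2)}{2}\equiv 1\ ({\rm mod}\ 3)$. Meanwhile, for $a\equiv 4\ ({\rm mod}\ 9)$ the computation above yields $\beta_1(2)\equiv 2\ ({\rm mod}\ 3)$, so $\beta_1(2)\not\equiv\frac{g''(2)}{2}\ ({\rm mod}\ 3)$ and the level-$2$ lift grows; then Proposition \ref{dt2}(1) guarantees every further lift grows. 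Hence $\sigma$ is a single cycle of length $3^{n-1}$ at each level, and minimality follows.

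The main obstacle I anticipate is precisely the comparison of $\beta_1(2)$ with $\frac{g''(2)}{2}$ modulo $3$ at the first lift, since this is exactly where the hypothesis $a\equiv 4\ ({\rm mod}\ 9)$ rather than $a\equiv 1\ ({\rm mod}\ 9)$ becomes essential: for $a\equiv 1\ ({\rm mod}\ 9)$ one instead gets $\beta_1(2)\equiv 1\equiv\frac{g''(2)}{2}\ ({\rm mod}\ 3)$, so the level-$2$ lift splits rather than grows, the cycle fails to lengthen, and minimality is impossible. Keeping the modulo-$9$ bookkeeping for $2^N$ and for $a$ correct and consistent is the delicate point; everything else is a routine propagation through Propositions \ref{dt1} and \ref{dt2}.
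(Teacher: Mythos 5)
Your proposal is correct and takes essentially the same route as the paper: both verify $\alpha_1(2)\equiv 1$ and $\beta_1(2)\not\equiv 0\ ({\rm mod}\ 3)$ via the mod-$9$ value of $2^N+1-2a$, then settle minimality for $a\equiv 4\ ({\rm mod}\ 9)$ by the $p=3$, $n=1$ criterion of Proposition \ref{dt2} (comparing $\beta_1(2)$ with $f_N''(2)/2$, which the paper packages as the single quantity $2^{N+1}+2-4a-3N(N-1)2^{N-2}\equiv 4-4a\ ({\rm mod}\ 9)$) and conclude with Theorem \ref{mini}. The computations agree, so no gap.
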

\begin{proof}
  By the same arguments in the proof of Proposition \ref{pro4.5}, we have $\sigma$ grows at level $1$.
  Since 
  \[2^N+1-2a\equiv 5-2a\ ({\rm mod}\ 9)\]
  and $|\beta_1(2)|_3=3\times |2^N+1-2a|_3$, we have $\beta_1(2)\equiv 0\ ({\rm mod}\ 3)$ if and only if $a\equiv 7\ ({\rm mod}\ 9)$.
  Since 
  \[2^{N+1}+2-4a-3N(N-1)\cdot 2^{N-2}\equiv -4a+4\ ({\rm mod}\ 9)\]
  and
  \[\left|\beta_1(2)-\frac{f^{\prime\prime}_N(2)}{2}\right|_3
  =3\times|2^{N+1}+2-4a-3N(N-1)\cdot 2^{N-2}|_3,\]
  we have $\beta_1(2)\equiv\frac{f^{\prime\prime}_N(2)}{2}\ ({\rm mod}\ 3)$ if and only if $a\equiv 1\ ({\rm mod}\ 9)$.
  By Theorems \ref{dt2} and \ref{mini}, we complete the proof.
\end{proof}
\begin{proposition}\label{pro4.7}
  Suppose $a\equiv 7\ ({\rm mod}\ 9)$ and $N\equiv 2\ ({\rm mod}\ 6)$. Then $\sigma$ splits at level $1$. Moreover, the compact open sets $(2+9\mathbb{Z}_3),\ (5+9\mathbb{Z}_3)$ and $(8+9\mathbb{Z}_3)$ are minimal sets if and only if $a\equiv 7\ ({\rm mod}\ 27)$ and $N\equiv 2\ ({\rm mod}\ 18)$, or $a\equiv 16\ ({\rm mod}\ 27)$ and $N\equiv 8,14\ ({\rm mod}\ 18)$.
\end{proposition}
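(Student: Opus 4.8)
The plan is to read off the cycle structure at levels $1$ and $2$, reduce the minimality of each ball to a single grows-versus-splits decision at level $2$, and then carry out the arithmetic modulo $27$ that separates the two cases. Throughout, $g=f_N$ and $k=1$, so $\alpha_n(x)=f_N'(x)$ and $\beta_n(x)=\bigl(f_N(x)-x\bigr)/3^{n}$.

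\emph{Splitting at level $1$.} First I would confirm $\sigma$ splits. Since $N\equiv 2\ ({\rm mod}\ 6)$ gives $|N|_3=1$ and $2^{N-1}\equiv -1\ ({\rm mod}\ 3)$, the derivative $\alpha_1(2)=N2^{N-1}/a\equiv 2\cdot 2\cdot 1\equiv 1\ ({\rm mod}\ 3)$. For $\beta_1$ I use that $2$ has order $6$ mod $9$, so $2^{N}\equiv 4\ ({\rm mod}\ 9)$; with $a\equiv 7\ ({\rm mod}\ 9)$ the numerator of $f_N(2)-2=(2^{N}+1-2a)/a$ is $\equiv 5-2a\equiv 0\ ({\rm mod}\ 9)$, whence $\beta_1(2)\equiv 0\ ({\rm mod}\ 3)$ and $\sigma$ splits.

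\emph{Level-$2$ structure and the reduction.} By Proposition \ref{dt1} the splitting length-one cycle $(2)$ lifts to three length-one cycles at level $2$, and I would check directly that these are the fixed points $2,5,8\ ({\rm mod}\ 9)$ of $f_{(2)}$. Each ball $r+9\mathbb{Z}_3$ is $f_N$-invariant (it refines Proposition \ref{pro4.4} via $f_N(r)\equiv r\ ({\rm mod}\ 9)$), and by Corollary \ref{cor2.5} $f_N$ acts isometrically on it because $|N|_3=|a|_3=|r|_3=1$. The key reduction is then: $r+9\mathbb{Z}_3$ is minimal if and only if the fixed point $r$ grows at level $2$. Indeed, if it grows, then since $p=3$ and $n=2\ge 2$, Proposition \ref{dt2} forces it to grow at every higher level, producing a single cycle of length $3^{\,n-2}$ on each $(r+9\mathbb{Z}_3)/3^{n}\mathbb{Z}_3$, so minimality follows from Theorem \ref{mini}. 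Conversely, if $r$ splits, the fixed point breaks into three fixed points at level $3$, so $f_{(3)}$ is not transitive there and minimality fails. As the lifts only grow or split (Proposition \ref{dt1}), the dichotomy is governed by $\beta_2(r)\ ({\rm mod}\ 3)$, i.e.\ by whether $f_N(r)\equiv r\ ({\rm mod}\ 27)$.

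\emph{The arithmetic mod $27$ and the main obstacle.} The splitting condition $f_N(r)\equiv r\ ({\rm mod}\ 27)$ is $r^{N}+1\equiv ra\ ({\rm mod}\ 27)$. I would compute $r^{N}\bmod 27$ from the orders mod $27$: both $2$ and $5$ have order $18$, while $8=2^{3}$ has order $6$; hence $2^{N},5^{N}$ depend only on $N\bmod 18\in\{2,8,14\}$, and $8^{N}\equiv 10\ ({\rm mod}\ 27)$ is constant on $N\equiv 2\ ({\rm mod}\ 6)$. This turns the whole question into a finite table in $a\bmod 27\in\{7,16,25\}$ and $N\bmod 18$. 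The outcome one extracts is that $r=8$ splits exactly when $a\equiv 25\ ({\rm mod}\ 27)$ (independently of $N$), which excludes that residue outright; for $a\equiv 7$ the balls $r=2,5$ split precisely at $N\equiv 8,14\ ({\rm mod}\ 18)$ respectively, forcing $N\equiv 2$; and for $a\equiv 16$ both $r=2,5$ split precisely at $N\equiv 2\ ({\rm mod}\ 18)$, forcing $N\in\{8,14\}$. The conceptual steps are routine once Propositions \ref{dt1}, \ref{dt2} and Theorem \ref{mini} are invoked; the real work, and the main obstacle, is the bookkeeping that tracks the \emph{simultaneous} growth of all three fixed points, since the splitting loci for $r=2$ and $r=5$ move with $a\bmod 27$ while the $r=8$ locus is rigid. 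Checking that the three constraints intersect in exactly the two advertised families $\{a\equiv 7,\ N\equiv 2\}$ and $\{a\equiv 16,\ N\equiv 8,14\}$ is where care is required.
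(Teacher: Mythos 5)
Your proposal is correct and follows essentially the same route as the paper: show $\sigma$ splits at level $1$, identify the three level-$2$ fixed points $2,5,8 \pmod 9$, reduce minimality of each ball $r+9\mathbb{Z}_3$ to $\beta_2(r)\not\equiv 0 \pmod 3$ via Propositions \ref{dt1}, \ref{dt2} and Theorem \ref{mini}, and then do the mod-$27$ case analysis, whose outcomes (including that $r=8$ splits exactly when $a\equiv 25 \pmod{27}$) agree with the paper's. The only difference is cosmetic: you organize the arithmetic via multiplicative orders mod $27$, while the paper expands $2^N+1-2a$ with $N=6s+2$, $a=9k+7$; you also spell out the grows-implies-minimal/splits-implies-not-minimal dichotomy that the paper leaves implicit in its citation of Theorems \ref{dt2} and \ref{mini}.
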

\begin{proof}
  By Proposition \ref{pro4.6}, we have $\sigma$ splits at level $1$. Let $N=6s+2$ and $a=9k+7$, where $s,k$ are non-negative integers. 
  Since
  $|\beta_2(2)|_3=9\times|2^N+1-2a|_3$ and
  \[2^N+1-2a\equiv 4\cdot 2^{6s}+9k-13\ ({\rm mod}\ 27),\]
  we have $\beta_2(2)\equiv 0\ ({\rm mod}\ 3)$ if and only if $a\equiv 7\ ({\rm mod}\ 27),\ N\equiv 8\ ({\rm mod}\ 18)$, or $a\equiv 16\ ({\rm mod}\ 27),\ N\equiv 2\ ({\rm mod}\ 18)$, or $a\equiv 25\ ({\rm mod}\ 27),\ N\equiv 14\ ({\rm mod}\ 18)$.\par 
  By repeating the same computation to $\beta_2(5)$ and $\beta_2(8)$, we have $\beta_2(5)\equiv 0\ ({\rm mod}\ 3)$ if and only if $a\equiv 7\ ({\rm mod}\ 27),\ N\equiv 14\ ({\rm mod}\ 18)$, or $a\equiv 16\ ({\rm mod}\ 27),\ N\equiv 2\ ({\rm mod}\ 18)$, or $a\equiv 25\ ({\rm mod}\ 27),\ N\equiv 8\ ({\rm mod}\ 18)$. We also have $\beta_2(8)\equiv 0\ ({\rm mod}\ 3)$ if and only if $a\equiv 25\ ({\rm mod}\ 27),\ N\equiv 2\ ({\rm mod}\ 6)$.\par
  So, we have $\beta_2(x)\not\equiv 0\ ({\rm mod}\ 3)$ holds for $x=2,5,8$ if and only if $a\equiv 7\ ({\rm mod}\ 27)$ and $N\equiv 2\ ({\rm mod}\ 18)$, or $a\equiv 16\ ({\rm mod}\ 27)$ and $N\equiv 8,14\ ({\rm mod}\ 18)$. By Theorems \ref{dt2} and \ref{mini}, we complete the proof.
\end{proof}
\begin{proof}[\textbf{Proof of Theorem \ref{dsy3}}]
  Combining Propositions \ref{pro4.4}, \ref{pro4.5}, \ref{pro4.6} and \ref{pro4.7}, we need only prove the case $(\text{i})$ and the subcases $2),4),5)$ of $(\text{ii})$.
  \par
  First, we consider case $(\text{i})$. Note that if $2\nmid N$, we have $|f_N(2)|_3=|2^N+1|_3\le \frac{1}{3}$. By Proposition $\ref{pro4.4}$, we have $\sigma$ is a cycle of length $3$ at level $1$. By the same arguments in the proof of Theorem \ref{dsy2}, we complete the proof of this case.
  \par
  Since the proof of subcases $2),4),5)$ of $(\text{ii})$ are the same, we only prove the subcase $2)$ of $(\text{ii})$.\par
  Since $|\alpha_1(2)-2|_3=|N\cdot 2^{N-1}-2a|_3$, we have $\alpha_1(2)\equiv 2\ ({\rm mod}\ 3)$ if and only if $N\equiv 4\ ({\rm mod}\ 6)$. So, $\sigma$ partially splits if and only if $N\equiv 4\ ({\rm mod}\ 6)$. By elementary computation, we also have $\sigma=(2,8)$ at level $2$ if and only if $a\equiv 1\ ({\rm mod}\ 9)$. Let $N=6s+4$. Since
  \[|\alpha_2(2)-1|_3=|N^2\cdot 2^{N-1}(2^N+1)^{N-1}-a^{N+1}|_3\] 
  and
  \[N^2\cdot 2^{N-1}(2^N+1)^{N-1}-a^{N+1}\equiv 3s-3\ ({\rm mod}\ 9),\]
  we have $A_2=1$ if and only if $N\equiv 4,16\ ({\rm mod}\ 18)$.
  For $d=2$, we have $A_2=1<2$. By Proposition \ref{dt4}, we have $\sigma=(2,8)$ grows forever. By Theorem \ref{mini}, we complete the proof. 
\end{proof}
\textbf{Acknowledgements}: We are deeply grateful to Professor Lingmin Liao for advices that both influenced the course of this research and improved its presentation.

%
%\noindent {\bf Data Availability:}
%Data sharing is not applicable to this article as no datasets were generated or analyzed during the current study.
%
%\vspace*{5ex}
%
%\noindent {\bf Conflict of interest:}  On behalf of all authors, the corresponding author states that there is no conflict of interest.
%
%\vspace*{5ex}

 { }

%%%%%%%%%%%%%%%%%%%%%%%%%%%%%%%%%%%%%%%%%%%%%

\vspace*{10ex}

\noindent Cheng Liu: School of Mathematics and Statistics,
Wuhan University, %Universit\'e Paris-Est Créteil CNRS, LAMA, F-94010 Créteil, France
\vspace{-2mm}

\noindent\phantom{Cheng Liu: }Bayi Road 299, Wuchang District, Wuhan, China
% \&
%Universit\'e Gustave Eiffel, LAMA, F-77447 Marne-la-Vallée, France
%\vspace{0mm}

\noindent\phantom{Cheng Liu: }e-mail: 2023202010021@whu.edu.cn %lingmin.liao@u-pec.fr


\begin{thebibliography}{99}


 %\slv{\bibitem{AGK} J.-P. An, L.-F. Guan and D. Kleinbock: Nondense orbits on homogeneous spaces and applications to geometry and number theory. arXiv:2001.05174.}

 \bibitem{als18} M. A. Kh. Ahmad, L. M. Liao and M. Saburov: Periodic $p$-adic Gibbs measures of $q$-state Potts model on Cayley trees {I}: The chaos implies the vastness of the set of $p$-adic Gibbs measures. J. Stat. Phys. 171 (2018), no. 6, 1000--1034.


 \bibitem{vsa06} V. Anashin: Ergodic transformations in the space of $p$-adic integers. In: $p$-adic mathematical physics. Amer. Inst. Phys., Melville, NY, (2006), 3--24.



 \bibitem{ak09} V. Anashin and A. Khrennikov: Applied algebraic dynamics. Walter de Gruyter \& Co., Berlin, (2009).

%\sv{\bibitem{BM2018} J. Bochi and I. D. Morris: Equilibrium states of generalised singular value potentials and applications to affine iterated function systems. Geom. Funct. Anal. 28 (2018), no. 4, 995-1028. }

 \bibitem{jay09} J. L. Chabert, A. H. Fan and Y. Fares: Minimal dynamical systems on a discrete valuation domain. Discrete Contin. Dyn. Syst. 25 (2009), no. 3, 777--795.

 \bibitem{dm01} D. L. DesJardins and M. E. Zieve: Polynomial mappings mod $p^n$, arXiv:math/0103046v1.


 \bibitem{dt24} T. Diagana: The long-term behavior of the p-adic Sigmoid Beverton-Holt dynamical systems in the projective line $\mathbb{P}^1(\mathbb{Q}_p)$. J. Math. Anal. Appl. 530 (2024), no. 1, Paper No. 127638, 12.


 \bibitem{dsv06} V. Dremov, G. Shabat and P. Vytnova: On the chaotic properties of quadratic maps over non-Archimedean fields. In: $p$-adic mathematical physics. Amer. Inst. Phys., Melville, NY, (2006), 43--54.

 \bibitem{fflw14} A. H. Fan, S. L. Fan, L. M. Liao and Y. F. Wang: On minimal decomposition of $p$-adic homographic dynamical systems. Adv. Math. 257 (2014), 92--135.

 \bibitem{fflw17} A. H. Fan, S. L. Fan, L. M. Liao and Y. F. Wang: Minimality of $p$-adic rational maps with good reduction. Discrete Contin. Dyn. Syst. 37 (2017), no. 6, 3161--3182.

 \bibitem{flw07}  A. H. Fan, L. M. Liao, Y. F. Wang and D. Zhou: $p$-adic repellers in $\mathbb{Q}_p$ are subshifts of finite type. C. R. Math. Acad. Sci. Paris. 344 (2007), no. 4, 219--224.

 \bibitem{fl11} A. H. Fan and L. M. Liao: On minimal decomposition of $p$-adic polynomial dynamical systems. Adv. Math. 228 (2011), 2116--2144.

 \bibitem{flyz07} A. H. Fan, M. T. Li, J. Y. Yao and D. Zhou: Strict ergodicity of affine $p$-adic dynamical systems on $\mathbb{Z}_p$. Adv. Math. 214 (2007), no. 2, 666--700.

 \bibitem{fl16s} S. L. Fan and L. M. Liao: Dynamics of the square mapping on the ring of $p$-adic integers. Proc. Amer. Math. Soc. 144 (2016), no. 3, 1183--1196.

 \bibitem{fl16c} S. L. Fan and L. M. Liao: Dynamical structures of Chebyshev polynomials on $\mathbb{Z}_2$. J. Number Theory. 169 (2016), 174--182.
 

%\bibitem{E1962} P. Erd\"{o}s: Representations of real numbers as sums and products of Liouville numbers. Michigan
%Math. J. 9 (1962), 59-60.

 \bibitem{fl18} S. L. Fan and L. M. Liao: Rational map $ax+1/x$ on the projective line over $\mathbb{Q}_p$. In: Advances in ultrametric analysis. Amer. Math. Soc., [Providence], RI, (2018), 217--230.

 \bibitem{sfl18} S. L. Fan and L. M. Liao: Rational map ax+1/x on the projective line over $\mathbb{Q}_2$. Sci. China Math. 61 (2018), 2221--2236.

 \bibitem{gkfcr12} G. R. J. Gauta, K. Goldringa, F. Grogana, C. Haskellb and R.J. Sacker: Difference equations with the Allee effect and the periodic Sigmoid Beverton-Holt equation revisited. J. Biol. Dyn. 6 (2012), no. 2, 1019--1033.


 \bibitem{hkk12} A. Harry, C. M. Kent and V. L. Kocic: Global behavior of solutions of a periodically forced Sigmoid Beverton-Holt model. J. Biol. Dyn. 6 (2012), no. 2, 212--234.


 \bibitem{mk16} F. Mukhamedov and O. Khakimov: Phase transition and chaos: $P$-adic Potts model on a Cayley tree. Chaos Solitons Fractals. 87 (2016), 190--196.

 \bibitem{mk18}
 F. Mukhamedov and O. Khakimov: Chaotic behavior of the $P$-adic Potts-Bethe mapping. Discrete Contin. Dyn. Syst. 38 (2018), no. 1, 231--245.

 \bibitem{mk22} F. Mukhamedov and O. Khakimov: Chaotic behavior of the $p$-adic Potts-Bethe mapping II. Ergodic Theory Dynam. Systems. 42 (2022), no. 11, 3433--3457.

 \bibitem{tvw89} E. Thiran, D. Verstegen and J. Weyers: $p$-adic dynamics. J. Statist. Phys. 54 (1989), no. 3-4, 893--913.


 \bibitem{ws98} C. F. Woodcock and N. P. Smart: $p$-adic chaos and random number generation. Experiment. Math. 7 (1998), no. 4, 333--342.

 \bibitem{j11} J. Wright: From oscillatory integrals and sublevel sets to polynomial congruences and character sums. J. Geom. Anal. 21 (2011), 224--240.






%\bibitem{BWpre} \red{B.-W. Wang ??? :   Multiplicative Diophantine approximation in dynamical systems, Pre-print (draft). }

%Mass transference principle from rectangles to rectangles in Diophantine approximation, arXiv:1909.00924, 2019.
\end{thebibliography}
\end{document}